\author{Katrin F\"assler, Tuomas Orponen, and S\'everine Rigot}
\title{Semmes surfaces and intrinsic Lipschitz graphs in the Heisenberg group}
\address{University of Jyv\"askyl\"a, Department of Mathematics and Statistics, P.O. Box 35 (MaD), FI-40014 University of Jyv\"askyl\"a, FINLAND}
\email{katrin.s.fassler@jyu.fi}
\address{University of Helsinki, Department of Mathematics and Statistics, P.O. Box 68 (Pietari Kalmin katu 5), FI-00014 University of Helsinki, FINLAND}
\email{tuomas.orponen@helsinki.fi}
\address{Universit\'e C\^ote d'Azur, CNRS, LJAD, Parc Valrose, 06108 Nice Cedex 02, FRANCE}
\email{Severine.RIGOT@univ-cotedazur.fr}
\subjclass[2010]{28A75 (Primary) 28A78 (Secondary)}
\newcommand{\R}{\mathbb{R}}
\newcommand{\LL}{\mathbb{L}}
\newcommand{\He}{\mathbb{H}}
\newcommand{\N}{\mathbb{N}}
\newcommand{\W}{\mathbb{W}}
\newcommand{\Z}{\mathbb{Z}}
\newcommand{\calL}{\mathcal{L}}
\newcommand{\calD}{\mathcal{D}}
\newcommand{\calG}{\mathcal{G}}
\newcommand{\calH}{\mathcal{H}}
\newcommand{\calB}{\mathcal{B}}
\newcommand{\diam}{\operatorname{diam}}
\newcommand{\card}{\operatorname{card}}
\newcommand{\dist}{\operatorname{dist}}
\newcommand{\NM}{\textup{NM}}
\newcommand{\NC}{\textup{NC}}
\newcommand{\h}{\mathfrak{h}}
\newcommand{\wid}{\mathrm{width}}
\newcommand{\Hek}{\mathbb{H}^{k}}
\newcommand{\Rk}{\mathbb{R}^{2k}}
\newcommand{\Sk}{\mathbb{S}^{2k-1}}
\newcommand{\calHk}{\mathcal{H}^{2k+1}}
\newcommand{\perh}{\operatorname{Per}_{\He}}
\newcommand{\per}{\operatorname{Per}}
\numberwithin{equation}{section}
\theoremstyle{plain}
\newtheorem{thm}[equation]{Theorem}
\newtheorem{lemma}[equation]{Lemma}
\newtheorem{cor}[equation]{Corollary}
\newtheorem{proposition}[equation]{Proposition}
\theoremstyle{definition}
\newtheorem{definition}[equation]{Definition}
\newtheorem{ex}[equation]{Example}
\theoremstyle{remark}
\newtheorem{remark}[equation]{Remark}
\begin{document}

\begin{abstract}
A Semmes surface in the Heisenberg group is a closed set $S$ that is upper Ahlfors-regular with codimension one and  satisfies the following condition, referred to as Condition B. Every ball $B(x,r)$ with $x \in S$ and $0 < r < \diam S$ contains two balls with radii comparable to $r$ which are contained in different connected components of the complement of $S$. Analogous sets in Euclidean spaces were introduced by Semmes in the late $80$'s. We prove that Semmes surfaces in the Heisenberg group are lower Ahlfors-regular with codimension one and have big pieces of intrinsic Lipschitz graphs. In particular, our result applies to the boundary of chord-arc domains and of reduced isoperimetric sets. The proof of the main result uses the concept of quantitative non-monotonicity developed by Cheeger, Kleiner, Naor, and Young. The approach also yields a new proof for the big pieces of Lipschitz graphs property of Semmes surfaces in Euclidean spaces.
\end{abstract}

\maketitle

\tableofcontents

%%%%%%%%%%%%%%%

\section*{Acknowledgements}

K.F.~is supported by Swiss National Science Foundation via the project \emph{Intrinsic rectifiability and mapping theory on the Heisenberg group}, grant no.~$161299$. T.O.~is supported by the Academy of Finland via the project \emph{Quantitative rectifiability in Euclidean and non-Euclidean spaces}, grant no.~$309365$, and by the University of Helsinki via the project \emph{Quantitative rectifiability of sets and measures in Euclidean spaces and Heisenberg groups}, grant no. $75160012$. S.R.~is partially supported by the French National Research Agency, \emph{Sub-Riemannian Geometry and Interactions} ANR-15-CE40-0018 project. Part of the research was conducted while K.F. and S.R. were visiting the University of Helsinki. The hospitality of the university is gratefully acknowledged. We would like to thank Bruno Franchi, Vasilis Chousionis, and Sean Li for numerous fruitful discussions, and Sylvester Eriksson-Bique for pointing out that the lower Ahlfors regularity of Semmes surfaces follows from 
relative isoperimetric inequalities. We are also grateful to the reviewer for pointing out relevant references.

%%%%%%%%%%%%%%%

\section{Introduction} \label{sect:introduction}

A study of quantitative rectifiability has recently been initialised in the Heisenberg groups $\Hek$, modelled after the Euclidean theory of \emph{uniformly rectifiable sets}, developed by David and Semmes~\cite{DS1,MR1251061} in the 90's. A starting point for the theory in $\Hek$ is the notion of \emph{intrinsic Lipschitz graphs} introduced by Franchi, Serapioni, and Serra Cassano~\cite{FSS,MR2313532}, see also~\cite{MR3511465}. Some evidence already suggests that intrinsic Lipschitz graphs are suitable building blocks for developing a theory {of quantitative rectifiability}, see for instance the papers of Naor and Young~\cite{Naor:2017:IGG:3055399.3055413,NY} and of the first two authors with Chousionis~\cite{CFO} (we note that the class of intrinsic Lipschitz graphs studied in~\cite{FSS, CFO} coincides with the class considered in~\cite{Naor:2017:IGG:3055399.3055413,NY}, see Remark~\ref{rmk:intrinsic-lip-graphs-H2k+1}). The present paper continues this line of research and provides further evidence in this direction.

We focus  on \emph{Semmes surfaces} in $\Hek$. These sets were introduced in Euclidean spaces by Semmes~\cite{Semmes} in connection with the study of the boundedness of certain singular integrals. David~\cite{MR1009120} proved that Semmes surfaces in $\R^{n}$ are uniformly rectifiable, and they even satisfy the stronger property of  having \emph{big pieces of Lipschitz graphs}. The main result in the present paper is the Heisenberg analogue of this theorem.

Before discussing the results more thoroughly, we recall some definitions. We refer to Section~\ref{s:prelim} for more details about the Heisenberg group $\Hek$, which agrees with $\Rk\times\R$ as a set and which can be viewed as a metric space of Hausdorff dimension $2k+2$ when equipped with the Kor\'anyi metric.
% and in particular for our conventions about the Heisenberg group $\Hek$ identified with $\Rk\times\R$ equipped with the Kor\'anyi metric and hence viewed as a metric space with Hausdorff dimension $2k+2$.

%We start with the definitions of upper and lower Ahlfors-regular sets and of Condition~\textbf{B}.

\begin{definition}[Upper and lower Ahlfors-regular sets] \label{def:Ahlfors-regular}
Let $k \in \N$ and $0 \leq s \leq 2k + 2$. A closed set $E\subset \Hek$ is \emph{upper Ahlfors-regular with dimension $s$} if there is a constant $C >0$ such that
\begin{equation} \label{e:upper-regular}
\mathcal{H}^{s} (B(p,r) \cap E) \leq C \, r^{s} ,  \qquad  p \in \Hek, \quad r>0 .
\end{equation}
Similarly, a closed set $E\subset \Hek$ is \emph{lower Ahlfors-regular with dimension $s$} if there is a constant $c >0$ such that
\begin{equation} \label{e:lower-regular}
\mathcal{H}^{s} (B(p,r) \cap E) \geq c \, r^{s}  ,  \qquad  p \in E, \quad 0 < r < \diam E.
\end{equation}
If $E$ is both upper and lower Ahlfors-regular with dimension $s$, then $E$ is called \emph{Ahlfors-regular with dimension $s$}.
\end{definition}

We will mostly be concerned with Ahlfors-regular sets with dimension $2k + 1$. So we will shorten the terminology and say that a closed subset of $\Hek$ is upper/lower Ahlfors-regular, without specifying the dimension, to mean that it is upper/lower Ahlfors-regular with dimension $2k + 1$.

We denote by $A^c:=\Hek\setminus A$ the complement of a set $A\subset \Hek$.

\begin{definition}[Condition \textbf{B}] \label{def:conditionB}
A closed set $S$ satisfies \emph{Condition} \textbf{B} if there is a constant $c > 0$ such that the following holds. For all $p \in S$ and $0 < r < \diam S$, there exist $q_{1},q_{2}$ in different connected components of $S^c$ such that
\begin{equation}\label{corkscrew}
q_{1},q_{2} \in B(p,r) \quad \text{and} \quad \min \{\dist(q_{1},S),\dist(q_{2},S)\} > c\, r.
\end{equation}
\end{definition}

Semmes surfaces are then defined as follows.

\begin{definition}[Semmes surfaces]\label{B} A closed set $S \subset \Hek$ is called a \emph{Semmes surface} if $S$ is upper Ahlfors-regular and satisfies Condition \textbf{B}.
\end{definition}

We refer to Definition~\ref{d:intrLip-H2k+1} for the definition of intrinsic $L$-Lipschitz graphs in $\Hek$ and give below the definition of a set with big pieces of intrinsic Lipschitz graphs.

\begin{definition}[BPiLG] \label{def:BPiLG} A closed set $S \subset \Hek$ has \emph{big pieces of intrinsic Lipschitz graphs} (or BPiLG in short) if there exist constants $L>0$ and $c > 0$ such that the following holds. For all $p \in S$ and $0 < r < \diam S$,  there exists an intrinsic $L$-Lipschitz graph $\Gamma$ such that
\begin{displaymath} \calHk (B(p,r) \cap S \cap \Gamma) \geq c\, r^{2k+1}. \end{displaymath}
\end{definition}

As in the Euclidean setting, BPiLG is a scale-invariant and quantitative notion of rectifiability.
It implies $(2k+1)$-dimensional $\He$-rectifiability, or equivalently $\He_L$-rectifiability, in the sense of~\cite[Definition 4.101]{MR3587666}. Namely, up to a set of null $\calHk$ measure, a closed Ahlfors-regular set with BPiLG can be covered by countably many intrinsic Lipschitz graphs, but the BPiLG condition is significantly stronger than $\He$-rectifiability.

Our main result reads as follows.

\begin{thm}\label{main}
Semmes surfaces in $\Hek$ are lower Ahlfors-regular and have big pieces of intrinsic Lipschitz graphs.
\end{thm}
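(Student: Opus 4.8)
We first establish lower Ahlfors-regularity, which will be used in the proof of BPiLG.

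Fix $p\in S$ and $0<r<\diam S$, and let $q_1,q_2\in B(p,r)$ be the points provided by Condition~\textbf{B}, lying in distinct connected components of $S^c$ with $\dist(q_i,S)>cr$. Let $\Omega$ be the connected component of $S^c$ that contains $q_1$. Then $\partial\Omega\subseteq S$, so upper Ahlfors-regularity gives $\calHk(\partial\Omega\cap B(p,2r))<\infty$; hence, by the Carnot-group form of Federer's criterion, $\Omega$ has locally finite $\He$-perimeter and $\perh(\Omega;\cdot)\le C\,\calHk\llcorner\partial\Omega\le C\,\calHk\llcorner S$. Moreover $B(q_1,cr)\subseteq\Omega$, while $B(q_2,cr)$ lies in a different component of $S^c$ and therefore misses $\Omega$; since both balls are contained in $B(p,2r)$, we obtain $\min\{|\Omega\cap B(p,2r)|,\,|B(p,2r)\setminus\Omega|\}\gtrsim r^{2k+2}$. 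The relative isoperimetric inequality on the metric ball $B(p,2r)$, a consequence of the $(1,1)$-Poincar\'e inequality on $\He$, then yields $\perh(\Omega;B(p,2r))\gtrsim r^{2k+1}$, and hence $\calHk(S\cap B(p,2r))\gtrsim r^{2k+1}$; replacing $r$ by $r/2$ gives~\eqref{e:lower-regular}. This argument uses only Condition~\textbf{B} and upper regularity, so no circularity arises when lower regularity is invoked below.

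For BPiLG we may, since $S$ is now Ahlfors-regular, work with a system of Christ--David dyadic cubes on $S$; by left-invariance and dilation-invariance of the hypotheses it suffices to find, for a fixed such cube $Q_0$, a set $G\subseteq Q_0$ with $\calHk(G)\gtrsim\ell(Q_0)^{2k+1}$ contained in a single intrinsic $L$-Lipschitz graph, $L$ and the implicit constant being structural. Following David's Euclidean argument, the plan rests on two pieces. First, a \emph{corona flatness estimate} (which we formalise below as Condition~(C$'$)): for a suitable small $\epsilon>0$ there is a Carleson-sparse family $\mathcal B$ of ``stopping'' cubes outside of which $S$ stays, at every scale, $\epsilon$-close in the bilateral sense to a coset of a vertical hyperplane whose horizontal normal varies slowly along each coherent chain of non-stopping cubes. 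Second, a \emph{reconstruction lemma}: each coherent region is contained in an intrinsic $L(\epsilon)$-Lipschitz graph over the associated vertical hyperplane, with $L(\epsilon)\to 0$ as $\epsilon\to 0$; this rests on the cone characterisation of intrinsic Lipschitz graphs and the compactness of the space of vertical hyperplanes, in the spirit of the theory relating intrinsic Lipschitz graphs to vertical $\beta$-numbers. Granting both, a routine Carleson-packing argument furnishes a sub-cube $Q_0^*\subseteq Q_0$ with $\ell(Q_0^*)\gtrsim_\epsilon\ell(Q_0)$ in which the stopping cubes cover at most half of $Q_0^*$ in $\calHk$-measure; then $G:=Q_0^*\setminus\bigcup\mathcal B$ has $\calHk(G)\gtrsim\ell(Q_0)^{2k+1}$ and lies on an intrinsic Lipschitz graph.

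The principal difficulty is the Carleson packing of the stopping cubes, and this is exactly where Condition~\textbf{B} enters: at a cube $Q$ where $S$ is not $\epsilon$-flat, the two corkscrew balls inside the ball associated with $Q$ force $\perh(\Omega;\cdot)$ on a fixed dilate of $Q$ to exceed the ``flat value'' $\sim\ell(Q)^{2k+1}$ by a definite amount $\gtrsim_\epsilon\ell(Q)^{2k+1}$; summing these excesses over the boundedly overlapping stopping cubes and comparing with the total perimeter $\perh(\Omega;CQ_0)\lesssim\ell(Q_0)^{2k+1}$ produces the packing bound. Carrying this out is delicate because the De Giorgi--Allard regularity theory that makes the Euclidean argument run smoothly is unavailable in $\He$: the relevant excess has to be quantified through the $L^1$-distance of $\Omega$ to vertical half-spaces, and the ``small excess $\Rightarrow$ vertical half-space'' rigidity must be obtained by a compactness argument in which Condition~\textbf{B} is precisely what rules out the degenerate blow-up limits (the empty set or lower-dimensional debris) that would otherwise survive. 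Establishing this rigidity, and upgrading it to the quantitative excess decay required to sum the Carleson series, is the main obstacle; once Condition~(C$'$) is in place, the reconstruction lemma and the extraction of $G$ are comparatively routine.
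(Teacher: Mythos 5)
Your first paragraph is a legitimate alternative to the paper's treatment of lower Ahlfors-regularity: the paper instead deduces it from big vertical projections (Proposition~\ref{BVPProp} and Corollary~\ref{cor:lower-ar}), precisely because BVP is needed again later, whereas you go through the Carnot-group Federer criterion and the relative isoperimetric inequality on balls. Modulo citing those two facts carefully (the isoperimetric inequality holds on a dilate $B(p,\lambda r)$, which is harmless), that half is fine. The problems are in the BPiLG half.

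There are two genuine gaps there. First, the Carleson packing of the non-flat cubes is the heart of the matter, and your proposal reduces it to an unproven ``small perimeter excess $\Rightarrow$ close to a vertical half-space'' rigidity to be obtained ``by compactness''. Since $\Omega$ is not a perimeter minimizer there is no excess-decay mechanism available, and the compactness argument would have to classify all limit configurations with vanishing deficit; in $\Hek$ that classification is exactly the deep Cheeger--Kleiner--Naor theory of monotone sets, which the paper imports through \cite[Proposition~66]{NY} after replacing the perimeter excess by a different quantity altogether, the \emph{horizontal width}, whose Carleson packing follows from an elementary Crofton-type bound (Theorem~\ref{widthL1}); Condition~\textbf{B} then enters only in the separate step Proposition~\ref{monotonicityToBetas}. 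Your sketch names this as ``the main obstacle'' but offers no mechanism to overcome it, and it also ignores the Heisenberg-specific issue that the approximating hyperplanes produced by any such argument may be horizontal rather than vertical (handled in the paper by Theorem~\ref{thm:bwgl}). Second, the extraction of the big piece is not ``a routine Carleson-packing argument'': a Carleson condition on the bad cubes does not imply that the maximal bad cubes below some $Q_0^*$ cover at most half of $Q_0^*$ in measure --- a single generation of bad cubes can tile $Q_0^*$ while contributing only $\sim \ell(Q_0^*)^{2k+1}$ to the Carleson sum. The largeness of the good set must come from big projections; this is why \cite[Theorem~1.14]{MR1132876} and its Heisenberg analogue (Theorem~\ref{thm:BVP+WGL-BPiLG}, via \cite{CFO}) assume big (vertical) projections in addition to the weak geometric lemma, and why in $\R^n$ there are uniformly rectifiable sets without big projections that fail BPLG. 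Your proposal never establishes or invokes BVP, so the final step cannot be completed as written.
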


The constants $L$ and $c$ in the BPiLG property can be chosen depending only on $k$, and on the upper Ahlfors-regularity and Condition~\textbf{B} constants for the Semmes surface. The lower Ahlfors-regularity constant only depends on $k$, and the Condition~\textbf{B} constant. The lower Ahlfors-regularity of closed sets satisfying Condition~\textbf{B} actually holds in much more general metric measure spaces than $\Hek$, see Remark \ref{r:lower_ADR}.

Before we give an outline of the proof of Theorem~\ref{main}, we give some applications. We first recall the corkscrew condition for open sets.

\begin{definition} [Corkscrew condition] \label{def:corkscrew-cdt} An open set $\Omega \subset \Hek$ satisfies the \emph{corkscrew condition} if for every $p \in \partial \Omega$ and $0 < r < \diam \partial \Omega$, there exist points $q_{1} \in \Omega$ and $q_{2} \in \overline{\Omega}^c$ satisfying~\eqref{corkscrew} with $S:=\partial \Omega$.
\end{definition}

It is clear that the boundary of an open set satisfying the corkscrew condition satisfies Condition~\textbf{B}. However, there are Semmes surfaces that do not arise as the boundary of some open set satisfying the corkscrew condition, see Section~\ref{subsect:NY-vs-our-setting} for more comments in this direction. Going back to the corkscrew condition, Theorem~\ref{main} has the following immediate consequence.

\begin{cor}\label{cor:corkscrew}
Let $\Omega \subset \Hek$ be an open set satisfying the corkscrew condition whose boundary is upper Ahlfors-regular. Then $\partial \Omega$ is lower Ahlfors-regular and has BPiLG.
\end{cor}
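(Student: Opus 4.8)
The plan is to deduce this from Theorem~\ref{main} by checking that, under the stated hypotheses, the closed set $S := \partial\Omega$ is a Semmes surface. Upper Ahlfors-regularity of $\partial\Omega$ is assumed, so the only point to verify is Condition~\textbf{B} for $S$.

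First I would record the elementary topological observation that $S^c = \Hek \setminus \partial\Omega$ is the disjoint union of the two open sets $\Omega$ and $\overline{\Omega}^c$. Indeed, since $\Omega$ is open we have $\partial\Omega = \overline{\Omega}\setminus\Omega$, whence $\Hek \setminus \partial\Omega = \Omega \cup \overline{\Omega}^c$, and $\Omega \cap \overline{\Omega}^c = \emptyset$ because $\Omega \subset \overline{\Omega}$. In particular $\Omega$ and $\overline{\Omega}^c$ are relatively clopen in $S^c$, so every connected component of $S^c$ is contained either in $\Omega$ or in $\overline{\Omega}^c$.

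Now fix $p \in \partial\Omega$ and $0 < r < \diam\partial\Omega$, and let $q_1 \in \Omega$ and $q_2 \in \overline{\Omega}^c$ be the points provided by the corkscrew condition, so that $q_1, q_2 \in B(p,r)$ and $\min\{\dist(q_1,S),\dist(q_2,S)\} > c\,r$. By the previous paragraph, the connected component of $S^c$ containing $q_1$ lies in $\Omega$ and the one containing $q_2$ lies in $\overline{\Omega}^c$; since $\Omega \cap \overline{\Omega}^c = \emptyset$, these two components are distinct. Hence $q_1,q_2$ witness Condition~\textbf{B} with the same constant $c$. Therefore $\partial\Omega$ is a Semmes surface, with Condition~\textbf{B} constant $c$ and the given upper regularity constant, and Theorem~\ref{main} applies: $\partial\Omega$ is lower Ahlfors-regular and has BPiLG, with all constants depending only on $k$ and on the upper regularity and corkscrew constants, as claimed.

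There is essentially no obstacle here, since all the substance is contained in Theorem~\ref{main}; the only (minor) point requiring care is the separation of $\Omega$ and $\overline{\Omega}^c$ within $S^c$, which is what upgrades the corkscrew points to points in \emph{different} connected components of the complement, as required by Condition~\textbf{B}.
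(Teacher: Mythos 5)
Your proposal is correct and follows the same route as the paper, which simply observes that the boundary of an open set satisfying the corkscrew condition satisfies Condition~\textbf{B} and then invokes Theorem~\ref{main}. Your extra step --- that $S^c = \Omega \cup \overline{\Omega}^c$ is a disjoint union of open sets, so the corkscrew points necessarily lie in distinct connected components of $S^c$ --- is exactly the detail the paper leaves implicit as ``clear,'' and it is verified correctly.
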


Corollary~\ref{cor:corkscrew} should be viewed as a quantitative counterpart of the  following qualitative statement. If $\Omega \subset \Hek$ is an open set satisfying the corkscrew condition whose boundary has locally finite $\calHk$ measure, then $\partial \Omega$ is $(2k+1)$-dimensional $\He$-rectifiable, or equivalently $\He_L$-rectifiable, in the sense of~\cite[Definition 4.101]{MR3587666}. This follows from the theory of finite $\He$-perimeter sets in $\Hek$, developed by Franchi, Serapioni, and Serra Cassano in~\cite{FSSC}, by noting that the topological boundary of such a set  coincides with its measure-theoretic boundary.

We recall that NTA domains are examples of domains satisfying the corkscrew condition together with an additional (interior) Harnack chain condition. In Euclidean spaces, NTA domains were introduced in 1982 by Jerison and Kenig~\cite{JK} and have found numerous applications, in particular in the study of harmonic functions and in the theory of elliptic equations and free boundary problems. The boundary of an arbitrary NTA domain need not be rectifiable, let alone have big pieces of Lipschitz graphs. Imposing the additional assumption that the boundary is Ahlfors-regular leads to the notion of a chord-arc domain, as stated in Definition~\ref{def:chord-arc} below in the Heisenberg setting. For more information and references, we refer for instance to the recent papers \cite{MR3626548} of Azzam, Hofmann, Martell, Nystr\"om, and Toro, \cite{MR3738187} of Azzam, and \cite{Badger} of Badger in the Euclidean setting. NTA domains in $\Hek$ have been investigated by Capogna and Tang \cite{MR1323792}, and in more general Carnot groups by Capogna and Garofalo \cite{CG} and Monti and Morbidelli \cite{MM}. See also the survey \cite{MR1847513} by Capogna, Garofalo and Nhieu.

\begin{definition} [Chord-arc domains] \label{def:chord-arc}
An NTA domain with Ahlfors-regular boundary in $\Hek$ is called a \emph{chord-arc domain}.
\end{definition}

Corollary~\ref{cor:corkscrew} applies in particular to chord-arc domains.

\begin{cor}\label{cor:chord-arc}
The boundary of a chord-arc domain in $\Hek$ has BPiLG.
\end{cor}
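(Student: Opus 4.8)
The plan is to deduce Corollary~\ref{cor:chord-arc} directly from Corollary~\ref{cor:corkscrew}, so the whole task reduces to verifying that a chord-arc domain $\Omega \subset \Hek$ satisfies the two hypotheses of that corollary: the corkscrew condition, and upper Ahlfors-regularity of $\partial\Omega$. The second hypothesis is immediate: by Definition~\ref{def:chord-arc} a chord-arc domain is an NTA domain whose boundary is Ahlfors-regular, and Ahlfors-regularity includes upper Ahlfors-regularity with dimension $2k+1$ by Definition~\ref{def:Ahlfors-regular}. So the only real point is to check the corkscrew condition.

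For the corkscrew condition, I would recall the definition of an NTA domain in $\Hek$ (as in Capogna--Tang~\cite{MR1323792}). An NTA domain is by definition an open set satisfying an interior corkscrew condition, an exterior corkscrew condition, and an interior Harnack chain condition. The interior corkscrew condition gives, for every $p \in \partial\Omega$ and $0 < r < \diam\partial\Omega$, a point $q_1 \in \Omega$ with $\dist(q_1, \partial\Omega) > c r$ and $q_1 \in B(p,r)$; since $q_1 \in \Omega$ which is open, automatically $\dist(q_1, S) = \dist(q_1, \partial\Omega)$. The exterior corkscrew condition is the same statement for the complement: it yields a point $q_2 \in \overline{\Omega}^c$ with $q_2 \in B(p,r)$ and $\dist(q_2, \partial\Omega) > c r$, and again $\dist(q_2, \overline{\Omega}^c) = \dist(q_2, \partial\Omega)$ since $\overline{\Omega}^c$ is open. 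Thus~\eqref{corkscrew} holds with $S := \partial\Omega$, which is exactly Definition~\ref{def:corkscrew-cdt}.

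With both hypotheses verified, Corollary~\ref{cor:corkscrew} applies to $\Omega$ and yields that $\partial\Omega$ is lower Ahlfors-regular and has BPiLG; in particular $\partial\Omega$ has BPiLG, which is the assertion of Corollary~\ref{cor:chord-arc}. I do not anticipate any genuine obstacle here: the argument is a short unwinding of definitions, and no part of the Heisenberg geometry (beyond the bookkeeping that interior/exterior corkscrew points have distance to $\partial\Omega$ equal to their distance to the relevant open set) enters. If anything, the only mild subtlety worth a sentence is reconciling the precise formulation of the corkscrew condition used in the definition of NTA domains in $\Hek$ (e.g.\ whether it is phrased with the Kor\'anyi metric or an equivalent Carnot--Carath\'eodory metric, and whether the radius range is stated as $0 < r < \diam\partial\Omega$ or $0 < r < r_0$ for some fixed $r_0$); since all such formulations are quantitatively equivalent, and the constants in Theorem~\ref{main} depend only on $k$ and the structural constants, the conclusion is unaffected. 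The proof is therefore essentially one line: a chord-arc domain satisfies the hypotheses of Corollary~\ref{cor:corkscrew}.
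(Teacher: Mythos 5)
Your proposal is correct and matches the paper's (implicit) argument exactly: the paper notes that NTA domains satisfy the corkscrew condition by definition, that Ahlfors-regularity of the boundary in particular gives upper Ahlfors-regularity, and then simply remarks that Corollary~\ref{cor:corkscrew} applies. Your unwinding of the definitions, including the observation that interior and exterior corkscrew points together furnish the two points required in Condition~\textbf{B} for $S=\partial\Omega$, is the same one-line deduction.
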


To illustrate Corollary~\ref{cor:chord-arc}, we next discuss a simple example of a chord-arc domain in $\Hek$ that is not an intrinsic Lipschitz domain, even though it has smooth boundary in the Euclidean sense. The same example was presented in~\cite[Proposition 15]{CG} as a motivation for studying NTA domains in $\He^1$.

\begin{ex} \label{ex:H} Let $\Omega := \{(v,t) \in \Hek : t > 0\}$ be the upper half-space whose boundary is $H: = \Rk \times \{0\}$. It is easy to see that $H \cap B(0,r)$ is not an intrinsic Lipschitz graph for any $r > 0$. Nevertheless, we now check that $\Omega$ is a corkscrew domain with upper Ahlfors-regular boundary, so $H$ has BPiLG by Corollary~\ref{cor:corkscrew}. First, $\Omega$ is actually an NTA domain. This follows for instance from a result of Monti and Morbidelli~\cite{MM} since $H$ is smooth. Alternatively, one can check that $\Omega$ satisfies the corkscrew condition via direct geometric arguments. Next, the upper Ahlfors-regularity of $H$ follows directly from \cite[Theorem 1.2]{MR2250055}, or can be deduced from~\cite[Corollary~7.7(i)]{FSSC}. Indeed, using left translations and dilations, the upper Ahlfors-regularity of $H$ is equivalent to
\begin{equation*}
\sup_{p\in H} \calHk ( B(0,1) \cap p\cdot H) <+\infty.
\end{equation*}
If $P:=p\cdot H$ for some $p\in H$, then $P$ is the boundary of a $C^1$ domain, and it follows from~\cite[Corollary~7.7(i)]{FSSC} that
\begin{equation*}
\calHk ( B(0,1)\cap P) \sim_k \int_{ B(0,1)\cap P} |C(q) n| \, d\mathcal{H}^{2k}_\mathrm{Euc}(q)
\end{equation*}
where $n$ denotes the unit Euclidean normal to $P$, the expression of $|C(p) n|$ is given in~\eqref{e:C(p)n}, and $\mathcal{H}^{2k}_\mathrm{Euc}$ denotes the $2k$-dimensional Hausdorff measure with respect to the Euclidean distance. Since $B(0,1)$ is contained in a Euclidean ball with radius $2$, it follows that $\calHk (B(0,1)\cap P) \lesssim_k 1$ which proves the upper Ahlfors-regularity of $H$. The lower Ahlfors-regularity of $H$ will be proven in \eqref{e:lower-ar-hyperplanes}, or follows again from \cite[Theorem 1.2]{MR2250055}. This last remark together with the previous ones hence show that $\Omega$ is a chord-arc domain.
\end{ex}

Other examples of Semmes surfaces in $\Hek$ are given by the boundaries of reduced isoperimetric sets. We recall that a measurable set $E\subset \Hek$ is said to be \emph{isoperimetric} if
$$\perh(E,\Hek) \leq \perh(F,\Hek)$$
for every measurable set $F\subset \Hek$ such that $\calH^{2k+2} (F) = \calH^{2k+2} (E)$, where $\perh$ denotes the $\He$-perimeter. We refer to~\cite{FSSC} for the definition of $\He$-perimeter. The existence of isoperimetric sets in $\Hek$, and more generally in any Carnot group, has been proven by the third author with Leonardi \cite[Theorem~3.2]{LR}. It is still an open question to find explicitly which sets are isoperimetric in this setting. One of the reasons why this question remains open is the difficulty of proving a priori regularity estimates for isoperimetric sets. It was however shown in~\cite{LR} that every isoperimetric set $E$ is equivalent to a unique open set $\Omega$, meaning that $\calH^{2k+2} (E \bigtriangleup\Omega) = 0$, which satisfies the corkscrew condition and whose boundary is Ahlfors-regular \cite[Theorem~3.3]{LR}. Such a set $\Omega$ is still isoperimetric and is called a \emph{reduced isoperimetric} set. Corollary~\ref{cor:corkscrew} can hence be applied to boundaries of reduced isoperimetric sets.

\begin{cor} \label{cor:isoperimetric-sets}
The boundary of a reduced isoperimetric set in $\Hek$ has BPiLG.
\end{cor}

We give now  an outline of the proof of Theorem \ref{main}, the details of which can be found in Sections~\ref{sect:main-approximation} to~\ref{sect:WGL+BVP-BPiLG-H2k+1}. As already mentioned, Theorem~\ref{main} is the Heisenberg analogue of David's Euclidean result~\cite[Proposition~2]{MR1009120}. Several different proofs for the Euclidean result, besides~\cite{MR1009120}, are available in the literature and can be found in~\cite{DJ} and~\cite{MR1132876}. The general strategy of our proof is inspired by the strategy in~\cite{MR1132876}. However, as far as we can tell, the arguments to make this strategy work in the Euclidean setting do not easily translate to $\Hek$, and neither do the other known proofs of the Euclidean analogue of Theorem~\ref{main}. Our arguments to implement the strategy in the Heisenberg setting will hence be quite different from Euclidean ones, and are inspired by the concept of \emph{non-monotonicity} introduced by Cheeger, Kleiner and Naor in~\cite{CKN}, and by some of the techniques developed by Naor and Young in~\cite{NY}.

Following the general strategy of~\cite{MR1132876}, we prove that a Semmes surface $S$ is lower Ahlfors-regular and has BPiLG by showing that $S$ has the following properties. First, $S$ is lower Ahlfors-regular by Proposition~\ref{prop:lowerAR}. This will be obtained as a consequence of the relative isoperimetric inequality in $\Hek$. Next, $S$ has \emph{big vertical projections} (BVP), see Definition~\ref{def:BVP} and Proposition~\ref{BVPProp}. Roughly speaking, BVP is a uniform and scale invariant way of requiring that, for every ball $B$ centred on $S$, there is a vertical subgroup onto which $B \cap S$ has a large projection in $\calHk$ measure. Finally, $S$ satisfies the \emph{weak geometric lemma} (WGL) \emph{for vertical hyperplanes}, see Definition~\ref{def:wgl-vertical-hyperplanes} and Proposition~\ref{prop:wgl-vertical-hyperplanes}. Informally, WGL for vertical hyperplanes states that $S$ can be well-approximated by vertical hyperplanes at most points and scales. For closed Ahlfors-regular sets in $\Hek$, the implication
\begin{equation*}
\textrm{BVP} + \textrm{WGL for vertical hyperplanes}   \quad \Longrightarrow \quad \textrm{BPiLG} \end{equation*}
is the Heisenberg analogue of a Euclidean result due to David and Semmes~\cite[Theorem~1.14]{MR1132876}. It has been proven in $\He^1$
by the first two authors with Chousionis in~\cite{CFO}. The extension of this result to higher dimensional Heisenberg groups will be given in Section~\ref{sect:WGL+BVP-BPiLG-H2k+1}, see~Theorem~\ref{thm:BVP+WGL-BPiLG}, and this will allow us to conclude the proof of Theorem~\ref{main}.

A challenge in $\Hek$ is to deduce from Condition~\textbf{B}, which does not include
any reference to the horizontal distribution of the Heisenberg group, information
about the family of vertical projections and their fibers, which
are horizontal lines. This challenge is not present in $\R^n$, where lines in all directions appear as fibers of orthogonal projections.
Moreover, unlike orthogonal projections in $\R^{n}$, group projections onto a vertical subgroup are not Lipschitz maps when both $\Hek$ and the vertical subgroup are equipped with the Kor\'anyi distance.
Consequently, while showing that a Semmes surface in $\R^n$ has big projections is a rather immediate consequence of Condition~\textbf{B}, proving the corresponding results in $\Hek$ is not so simple. Complications related to the fact that we are dealing with a restricted family of lines also appear in the proof of WGL for vertical hyperplanes.

We introduce in Section~\ref{subsect:width} the notion of \emph{horizontal width} as a central tool in the proof of both BVP and WGL for vertical hyperplanes. This notion is inspired by the concept of non-monotonicity, introduced by Cheeger, Kleiner, and Naor in~\cite{CKN}. The non-monotonicity of a given set is indeed controlled by the horizontal width of its boundary, see Lemma~\ref{lemma1} and Proposition~\ref{form1}. The novelty here is that horizontal width turns out to be a suitable tool to deal with Semmes surfaces that do not necessarily arise as boundaries of sets satisfying the corkscrew condition.

We show that if the horizontal width of a Semmes surface $S$ is small inside a ball, then $S$ is well-approximated, in a bilateral way, by a hyperplane inside a slightly smaller ball, Corollary~\ref{cor:main}. Our proof goes through a first step, Corollary~\ref{cor:small-width-imply-cc-close-half-spaces}, which combines the control of the non-monotonicity of every connected components of the complement of $S$ by the horizontal width of $S$ with a result due to Naor and Young~\cite[Proposition~66]{NY}. Naor and Young's result is itself inspired by a deep stability  theorem for monotone sets due to Cheeger, Kleiner and Naor. Condition~\textbf{B} first appears in the second step of the proof of Corollary~\ref{cor:main} and allows us to prove that if $S$ satisfies the conclusion of Corollary~\ref{cor:small-width-imply-cc-close-half-spaces}, namely, if all connected components of its complement are measure-theoretically close to half-spaces inside a ball, then $S$ is rather flat in a slightly smaller ball, see Proposition~\ref{monotonicityToBetas}.

With Corollary~\ref{cor:main} in hand, we can then proceed to the proof of BVP and WGL for vertical hyperplanes. The proof of BVP is given in Section~\ref{BVPSection}. Then Section~\ref{sect:bwgl} is devoted to the proof of the validity of the \textit{bilateral} weak geometric lemma (BWGL) for  vertical hyperplanes, see Definition~\ref{def:bwgl-vertical-hyperplanes} and Proposition~\ref{prop:bwgl-vertical-hyperplanes}. BWGL is stronger than WGL and obviously implies WGL for vertical hyperplanes, see Section~\ref{sect:BPiLG}. Our proof of the validity of BWGL for  vertical hyperplanes relies on the following facts. First, we prove that, for a closed upper Ahlfors-regular set, balls with large horizontal width satisfy a Carleson packing condition, Proposition~\ref{NMCarleson}. Combined with Corollary~\ref{cor:main}, this implies that a Semmes surface is well-approximated, in a bilateral way, by arbitrary hyperplanes at most points and scales, Proposition~\ref{prop:bwgl-arbitrary-hyperplanes}. Then we upgrade this approximation by arbitrary hyperplanes to a similar approximation by vertical hyperplanes, Proposition~\ref{prop:bwgl-vertical-hyperplanes}. We explicitly note in Section~\ref{sect:bwgl-vertical-hyperplanes} that the validity of BWGL for vertical hyperplanes for a closed Ahlfors-regular set is equivalent to the validity of BWGL for arbitrary hyperplanes, Theorem~\ref{thm:bwgl}. The proof of this latter fact is already essentially contained in the work of Naor and Young~\cite{NY}.

We end the paper with a
%final
section where we first
%give some comments about links,
comment on connections as well as differences, between Naor and Young's work \cite[Section~9]{NY} and the setting and results of the present paper. This suggests
%in particular
further questions to better understand the theory of ``uniform rectifiability'' that is now emerging in the Heisenberg setting. In the final Section~\ref{subsect:euclidean-setting}, we go back to Euclidean spaces and note that the method of the current paper applies to the Euclidean setting, too, and gives a direct proof of the validity of the bilateral weak geometric lemma for Semmes surfaces in $\R^n$. As a consequence, it also gives a new path for the proof of the big pieces of Lipschitz graphs property for Semmes surfaces in Euclidean spaces.

%\medskip

%%%%%%%%%%%%%%%%%%%%%

\section{Preliminaries} \label{sect:preliminaries}

In this section, we introduce some of the concepts frequently used in the rest of the paper and state notational conventions.

%%%

\subsection{The Heisenberg group} \label{s:prelim}

For a fixed integer $k \geq 1$, we identify the Heisenberg group $\Hek$ with $\Rk \times \R$ equipped with the group law
\begin{displaymath}
(v,t) \cdot (v',t') :=\left(v+v',t+t'+\omega(v,v')/2\right),\quad (v,t),(v',t')\in \Hek,
\end{displaymath}
where $\omega(v,v') := \sum_{j=1}^k v_j v'_{j+k} - v_{j+k} v'_j$ for $v=(v_1,\dots,v_{2k})$, $v'=(v'_1,\dots,v'_{2k}) \in \Rk$.

We equip $\Hek$ with the \emph{Kor\'{a}nyi norm} $\|\cdot\|$ and \emph{Kor\'{a}nyi metric} $d$ defined respectively by
 \begin{equation*}
 \|(v,t)\|:= \sqrt[4]{|v|^4 + 16 t^2} \quad \text{and}\quad d(p,q):= \| q^{-1}\cdot p\| ,
 \end{equation*}
where $|\cdot|$ denotes the Euclidean norm in $\Rk$.

The metric $d$ is homogeneous, meaning that it is left-invariant, that is, $d(p\cdot q, p\cdot q') = d(q,q')$ for all $p, q, q' \in \Hek$, and one-homogeneous with respect to the \emph{Heisenberg dilations} $(\delta_r)_{r>0}$ given by
 \begin{displaymath}
\delta_r:\Hek \to \Hek,\quad  \delta_r(v,t):= (rv,r^2t),
 \end{displaymath}
that is, $d(\delta_r(p), \delta_r(q)) = r d(p,q)$ for all $p,q\in\Hek$ and $r>0$. We recall that the family $(\delta_r)_{r>0}$ of dilations is a one-parameter
group
 of group automorphisms of $\Hek$. We also recall that any two homogeneous distances on $\Hek$ are biLipschitz equivalent.

For $s \geq 0$, we denote by $\calH^s$ the Hausdorff measure of dimension $s$ with respect to the Kor\'anyi metric. We recall that $\calH^{2k+2}$ is a Haar measure on $\Hek$ and is $(2k+2)$-uniform. In particular $(\Hek,d)$ is a metric space of Hausdorff dimension $2k+2$. A thorough introduction to $(\Hek,d)$ can be found for instance to the monograph~\cite{MR2312336}.

Unless otherwise explicitly stated, all metric notions in $\Hek$ will always be defined with respect the Kor\'anyi metric. In particular, we write $B(p,r):=\{q \in \Hek : d(p,q) \leq r\}$ for a closed ball in $(\Hek,d)$ centred at $p \in \Hek$ with radius $r > 0$. By default, a ball will refer to a closed ball.

%%%

\subsection{Subgroups, projections, and intrinsic Lipschitz graphs} \label{subsect:subgroups-etc-H2k+1}
We denote by $\Sk$ the Euclidean unit sphere in $\Rk$. Given $A\subset\Rk$, we denote by $A^\perp$ the linear subspace orthogonal to $A$ in the Euclidean sense in $\Rk$.

Given $\nu \in \Sk$, we set
$$ \LL_\nu:=\R\nu \times \{0\} \quad \text{and} \quad \W_\nu:= \nu^\perp \times \R.$$
These sets are complementary homogeneous subgroups of $\Hek$ which means that both $\LL_\nu$ and $\W_\nu$ are subgroups of $\Hek$ closed under the family of dilations $(\delta_r)_{r>0}$ and every point $p\in\Hek$ can be uniquely written as $p=p_{\W_\nu} \cdot p_{\LL_\nu}$ for some $p_{\W_\nu}\in \W_\nu$ and $p_{\LL\nu}\in \LL_\nu$. We call \textit{horizontal subgroup} a set of the form $\LL_\nu$ for some $\nu\in\Sk$ and \textit{vertical subgroup} a set of the form $\W_\nu$ for some $\nu\in\Sk$.

Given $\nu \in \Sk$, we define the \textit{vertical} and \textit{horizontal projections} onto $\W_\nu$ and $\LL_\nu$ respectively by
\begin{align*}
\pi_{\W_\nu}:\Hek \to \W_\nu,\quad &\pi_{\W_\nu}(p)=p_{\W_\nu}\\
\pi_{\LL_\nu}:\Hek \to \LL_\nu,\quad &\pi_{\LL_\nu}(p)=p_{\LL_\nu}.
\end{align*}
Note that, given $\nu\in \Sk$ and $p=(v,t)\in\Hek$, we have
\begin{align}
\pi_{\W_\nu}(p) &= \left(v-\langle v,\nu \rangle \nu, t- \omega(v,\langle v,\nu \rangle \nu)/2\right), \label{e:pwnu}\\
\pi_{\LL_\nu}(p) &= (\langle v,\nu \rangle \nu,0),\label{e:plnu}
\end{align}
where $\langle \cdot ,\cdot \rangle$ denotes the Euclidean scalar product in $\Rk$. Unlike orthogonal projections in Euclidean spaces, vertical projections $\pi_{\W_\nu}$ are not Lipschitz maps from $(\Hek,d)$ to $\W_\nu$ equipped with the restriction of the Kor\'anyi distance.

Given $\gamma > 0$ and $\nu\in\Sk$, we define the \emph{Heisenberg cone} with vertex at origin, opening $\gamma$ and direction $\nu$ by
\begin{displaymath} C_{\gamma}(\nu) := \{p \in \Hek: \, \|\pi_{\W_\nu}(p)\| \leq \gamma \|\pi_{\LL_\nu}(p)\|\}. \end{displaymath}

\begin{definition}[Intrinsic Lipschitz graphs] \label{d:intrLip-H2k+1} Let $L>0$ and $\nu\in\Sk$. We say that a set $\Gamma \subset \Hek$ is an \emph{intrinsic $(L,\nu)$-Lipschitz graph} (over $\W_{\nu}$)  if $\pi_{\W_\nu} (\Gamma) = \W_\nu$ and
\begin{equation} \label{e:def-Lip-graphs}
(p\cdot C_{1/L}(\nu))\cap \Gamma =\{p\} \quad \text{for all } p\in \Gamma.
\end{equation}
We say that $\Gamma \subset \Hek$ is an \emph{intrinsic $L$-Lipschitz graph} if it is an \emph{intrinsic $(L,\nu)$-Lipschitz graph} for some $\nu\in\Sk$, and we say  that $\Gamma \subset \Hek $ is an \emph{intrinsic Lipschitz graph} if it is a \emph{intrinsic $(L,\nu)$-Lipschitz graph} for some $L>0$ and $\nu\in\Sk$.
\end{definition}

This definition was introduced by Franchi, Serapioni and Serra Cassano, see for instance~\cite{FSS,MR2313532}. An intrinsic $(L,\nu)$-Lipschitz graph $\Gamma$ as in Definition~\ref{d:intrLip-H2k+1} can be written as $\Gamma = \{p\cdot\varphi(p): p\in \W_\nu\}$ for some map $\varphi: \W_\nu \rightarrow \LL_\nu$. Moreover, if $\Gamma \subset \Hek$ satisfies~\eqref{e:def-Lip-graphs} for some $L>0$ and $\nu\in\Sk$, then there is $L'>0$, depending only on $L$, and an intrinsic $(L',\nu)$-Lipschitz graph $\Gamma'$ (over $\W_{\nu}$) as in Definition~\ref{d:intrLip-H2k+1} such that $\Gamma \subset \Gamma'$. A detailed discussion of intrinsic Lipschitz graphs and their properties can be found for instance in Serra Cassano's lecture notes~\cite{MR3587666}.

The class of intrinsic Lipschitz graphs given by Definition~\ref{d:intrLip-H2k+1} is invariant under a change of homogeneous norm on $\Hek$. Namely, since any two homogeneous norms on $\Hek$ are bilipschitz equivalent, replacing the Kor\'anyi norm by any other homogeneous norm in the definition of the cones $C_{\gamma}(\nu)$ gives rise to the same class of intrinsic Lipschitz graphs.

\begin{remark} \label{rmk:intrinsic-lip-graphs-H2k+1}
A different definition for intrinsic Lipschitz graphs was given by Naor and Young in \cite[Section 2.3]{NY} using the cones
\begin{displaymath}
\widetilde{C}_{\lambda}(\nu) := \{p\in \Hek:\;\lambda d_{SR}(p,0)< d_{SR}(\pi_{\LL_\nu}(p),0)\},
\end{displaymath}
where $\nu\in\Sk$ and $d_{SR}$ denotes the sub-Riemannian distance.
According to this definition, a set $\Gamma \subset \Hek$ is an intrinsic Lipschitz graph if there exist $\lambda \in (0,1)$ and $\nu \in \Sk$ such that
\begin{equation}\label{eq:NY-H2k+1}
(p\cdot \widetilde{C}_{\lambda}(\nu)) \cap \Gamma = \emptyset \quad \text{for all }  p\in \Gamma .
\end{equation}
It is easy to see that intrinsic Lipschitz graphs in the sense of \eqref{eq:NY-H2k+1} are intrinsic Lipschitz graphs in the sense of Definition \ref{d:intrLip-H2k+1}. Conversely, an intrinsic Lipschitz graph in the sense of Definition~\ref{d:intrLip-H2k+1} is an intrinsic Lipschitz graph in the sense of~\eqref{eq:NY-H2k+1} as we explain now. We note that, although the class of intrinsic Lipschitz graphs given by Definition~\ref{d:intrLip-H2k+1} is independent of the choice of the homogeneous norm, the Kor\'anyi norm will nevertheless play a role for the definition of the cones $C_{\gamma}(\nu)$ in the following argument. This argument works with some other choices of homogeneous norms, but does not work for every arbitrary choice.
 The Kor\'anyi norm has indeed the useful property that there exists $\epsilon >0$ such that for every $\nu\in\Sk$, one has
\begin{equation}\label{eq:useful-H2k+1}
\|p\|^4 \geq \|\pi_{\LL_\nu}(p)\|^4 + \epsilon \|\pi_{\W_\nu}(p)\|^4 \quad \text{for all }p\in \Hek.
\end{equation}
This sort of strict convexity property is not true for arbitrary homogeneous norms. For instance $N(v,t):= \max\{|v|,\sqrt{|t|}\}$ defines a homogeneous norm on $\Hek$ for which there exists a point $p$ with $\pi_{\W_\nu}(p)\neq 0$, yet $N(p)= N(\pi_{\LL_\nu}(p))$. Inequality \eqref{eq:useful-H2k+1} for the Kor\'anyi norm implies for every $\gamma>0$ the existence of $\lambda \in (0,1)$ such that
\begin{displaymath}
\widetilde{C}_{\lambda}(\nu) \subset C_{\gamma}(\nu) \setminus \{0\}
\end{displaymath} as can be seen by the following simple computation.
For $p\notin C_{\gamma}(\nu)$, one finds
\begin{align*}
\|p\|^4 &\geq  \|\pi_{\LL_\nu}(p)\|^4 + \epsilon \|\pi_{\W_\nu}(p)\|^4 \geq \|\pi_{\LL_\nu}(p)\|^4 + \epsilon \gamma^4 \|\pi_{\LL_\nu}(p)\|^4,
\end{align*}
so that
\begin{displaymath}
d_{SR}(p,0)^4\geq\|p\|^4 \geq \left(1+ \epsilon \gamma^4\right)\|\pi_{\LL_\nu}(p)\|^4 = \left(1+ \epsilon \gamma^4 \right)d_{SR}(\pi_{\LL_\nu}(p),0)^4,
\end{displaymath}
and hence $p\notin \widetilde{C}_{\lambda}(\nu)$ for $\lambda = (1+\epsilon \gamma^4 )^{-1/4}$.
\end{remark}

%%%

\subsection{Horizontal lines}

\begin{definition}[Horizontal lines] A \emph{horizontal line} in $\Hek$ is a left-translate of a horizontal subgroup, that is, a set of the form $p \cdot \LL_\nu$ for some $p\in\Hek$ and $\nu\in\Sk$. We denote by $\calL$ the family of all horizontal lines in $\Hek$.
\end{definition}

Considering horizontal lines in the Heisenberg setting is natural in light of the specific geometry of $\Hek$. Restricted to a horizontal subgroup, the Kor\'anyi metric agrees with the Euclidean distance. Using left-translations, it follows that horizontal lines endowed with the restriction of the Kor\'anyi metric are isometric to the real line with its standard Euclidean distance and consequently have Hausdorff dimension $1$ in $(\Hek,d)$. Moreover, one finds for every horizontal line $\ell \in \calL$ and for every segment $I \subset \ell$ that $\calH^1(I) = \mathrm{diam}(I)$. On the contrary, non-horizontal lines have Hausdorff dimension $2$ with respect to the Kor\'anyi metric.

Up to a multiplicative constant, there exists a unique non-trivial left-invariant measure $\h$ on $\calL$, in $\He^1$ see ~\cite[Section 4.2]{CKN}. The following expression for $\h$ will be convenient for our purposes. For a Borel set $\mathcal{A} \subset \calL$, we write
\begin{equation} \label{e:def-meas-h-H2k+1}
\h(\mathcal{A}) := \int_{\Sk} \calHk (\{p \in \W_{\nu} : p\cdot \LL_\nu \in \mathcal{A}\}) \, d\nu
\end{equation}
where $d\nu$ denotes the surface measure on $\Sk$.

Measures of this form appear in the kinematic formula given for Carnot groups in \cite[Proposition 3.13]{MR2165404}, but the presentation simplifies in our setting. It can indeed easily be checked that~\eqref{e:def-meas-h-H2k+1} defines a non-trivial measure $\h$ on $\calL$. Moreover, $\h$ is left-invariant, as we verify in Lemma~\ref{leftInvariance-H2k+1}. By uniqueness, $\h$ coincides, up to a multiplicative constant, with the left-invariant measures on $\calL$ used in~\cite{CKN} and~\cite{NY}. For a related discussion, see~\cite[Section 11.1]{CKN}. Note that $\h$ has the following homogeneity property,
$$\h(\delta_r(\mathcal{A})) = r^{2k+1} \h(\mathcal{A}), \quad \text{for every Borel set }\mathcal{A}\subset \calL , \, r>0.$$
As an immediate consequence of~\eqref{e:def-meas-h-H2k+1}, we also note for future reference that
\begin{equation} \label{e:h-l(A)}
\h(\calL (A)) = \int_{\Sk} \calHk (\pi_{\W_{\nu}}(A)) \, d\nu
\end{equation}
where $\calL(A) := \{\ell \in \calL : \ell \cap A \neq \emptyset\}$ denotes the set of horizontal lines meeting $A\subset\Hek$.

\begin{lemma}\label{leftInvariance-H2k+1} The measure $\h$ is left-invariant. Namely, for every Borel set $\mathcal{A} \subset \calL$ and $p \in \Hek$, one has $\h(p\cdot \mathcal{A}) = \h(\mathcal{A})$.
\end{lemma}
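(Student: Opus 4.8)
The plan is to reduce the left-invariance of $\h$ to the invariance of $\calHk$ on a fixed vertical subgroup under a suitable transformation, exploiting the fact that left translations preserve the \emph{direction} of a horizontal line: if $\ell = q\cdot\LL_\nu$ and $p\in\Hek$, then $p\cdot\ell = (p\cdot q)\cdot\LL_\nu$ is again a horizontal line with the same direction $\nu$. Consequently, for each fixed $\nu\in\Sk$ and Borel $\mathcal{A}\subset\calL$,
\[
\{q\in\W_\nu : q\cdot\LL_\nu\in p\cdot\mathcal{A}\} = \{q\in\W_\nu : (p^{-1}\cdot q)\cdot\LL_\nu\in\mathcal{A}\},
\]
so by~\eqref{e:def-meas-h-H2k+1} it suffices to understand the map $q\mapsto(p^{-1}\cdot q)\cdot\LL_\nu$ on $\W_\nu$.

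To this end I would write $p^{-1} = a\cdot b$ with $a := \pi_{\W_\nu}(p^{-1})\in\W_\nu$ and $b := \pi_{\LL_\nu}(p^{-1})\in\LL_\nu$, and use that $\W_\nu$ is a normal subgroup of $\Hek$ (it contains the centre $\{0\}\times\R$, and $\Hek$ is two-step nilpotent, so $\Hek$ modulo its centre is abelian). Then for $q\in\W_\nu$ one has $p^{-1}\cdot q = \big(a\cdot(b\cdot q\cdot b^{-1})\big)\cdot b$ with $a\cdot(b\cdot q\cdot b^{-1})\in\W_\nu$ and $b\in\LL_\nu$, so by uniqueness of the $\W_\nu\cdot\LL_\nu$ decomposition $\pi_{\W_\nu}(p^{-1}\cdot q) = a\cdot(b\cdot q\cdot b^{-1}) =: T(q)$ and $(p^{-1}\cdot q)\cdot\LL_\nu = T(q)\cdot\LL_\nu$. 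Hence the set above equals $T^{-1}(E_\nu)$, where $E_\nu := \{w\in\W_\nu : w\cdot\LL_\nu\in\mathcal{A}\}$ is precisely the fiber appearing in $\h(\mathcal{A})$, and everything reduces to showing that $T$ preserves $\calHk$ restricted to $\W_\nu$.

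This last point is the heart of the matter. I would factor $T = L_a\circ c_b$, where $L_a\colon\W_\nu\to\W_\nu$ is left translation by $a$ inside the group $\W_\nu$ and $c_b(w) := b\cdot w\cdot b^{-1}$. Since $a\in\W_\nu$, the map $L_a$ is the restriction of an isometry of $(\Hek,d)$ carrying $\W_\nu$ onto $\W_\nu$, hence preserves $\calHk|_{\W_\nu}$; in particular $\calHk|_{\W_\nu}$ is a nontrivial, locally finite, left-invariant measure on the nilpotent group $\W_\nu$, and therefore a constant multiple of Lebesgue measure on the $2k$-dimensional vector space $\W_\nu = \nu^\perp\times\R$. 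For $c_b$, a short computation with the group law (writing $b = (\beta\nu,0)$ and $q = (w,s)$ with $w\in\nu^\perp$) gives $c_b(w,s) = (w,\, s + \beta\,\omega(\nu,w))$, a linear automorphism of $\W_\nu$ of determinant $1$, which therefore preserves Lebesgue measure and hence $\calHk|_{\W_\nu}$. Thus $T$ preserves $\calHk|_{\W_\nu}$, and since $T$ is a bijection, \eqref{e:def-meas-h-H2k+1} yields
\[
\h(p\cdot\mathcal{A}) = \int_{\Sk}\calHk\big(T^{-1}(E_\nu)\big)\,d\nu = \int_{\Sk}\calHk(E_\nu)\,d\nu = \h(\mathcal{A}),
\]
the Borel measurability of the integrand being routine, exactly as in the verification that~\eqref{e:def-meas-h-H2k+1} defines a measure.

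The step I expect to be the real obstacle is the measure-preservation of the conjugation $c_b$: the preservation of line directions and the normality bookkeeping are formal, but this one requires both the explicit identification of conjugation by a horizontal element as a unipotent shear of $\W_\nu$ and the identification of $\calHk|_{\W_\nu}$ with Lebesgue measure up to a constant.
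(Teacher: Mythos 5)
Your argument is correct and follows the same route as the paper: both reduce left-invariance of $\h$ to the statement that, for each fixed $\nu$, the map $q \mapsto \pi_{\W_\nu}(p^{-1}\cdot q)$ (the paper's $\Phi_{p^{-1},\nu}$) preserves $\calHk$ on $\W_\nu$ and carries the fiber of $p\cdot\mathcal{A}$ to the fiber of $\mathcal{A}$. The only difference is that the paper quotes Franchi--Serapioni \cite[Lemma 2.20]{MR3511465} for the unit-Jacobian property of this map, whereas you prove it directly by factoring it as an isometric left translation inside $\W_\nu$ composed with the conjugation $c_b(w,s)=(w,s+\beta\,\omega(\nu,w))$, a unipotent shear of determinant one; your computation of $c_b$ and the identification of $\calHk|_{\W_\nu}$ with Haar (Lebesgue) measure are both correct, so this makes the lemma self-contained.
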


\begin{proof} Start by writing
\begin{equation}\label{form23-H2k+1} \h(\mathcal{A}) = \int_{\Sk} \calHk (\{p \in \W_{\nu} : p\cdot \LL_\nu \in \mathcal{A}\}) \, d\nu  = \int_{\Sk} \calHk (A_{\nu}) \, d\nu, \end{equation}
where $A_{\nu}: =\{p \in \W_{\nu} : p\cdot \LL_\nu \in \mathcal{A}\} $. Now, for $\nu \in \Sk$ and $p \in \Hek$ fixed, we define the map $\Phi_{p,\nu} \colon \W_{\nu} \to \W_{\nu}$ by
\begin{displaymath}
\Phi_{p,\nu}(q) := \pi_{\W_{\nu}}(p \cdot q) = p \cdot q \cdot \pi_{\LL_{\nu}}(p)^{-1}, \qquad q \in \W_{\nu}. \end{displaymath}
Note that
\begin{equation}\label{form24-H2k+1} \Phi_{p^{-1},\nu} = (\Phi_{p,\nu})^{-1}. \end{equation}
Then, we recall a result of Franchi and Serapioni \cite[Lemma 2.20]{MR3511465}, which states that $\Phi_{p,\nu}$, seen as a map of $\Rk$, has Jacobian identically equal to $1$, which implies that
\begin{displaymath}
\calHk(\Phi_{p,\nu}(A_{\nu})) = \calHk (A_{\nu}), \qquad p \in \Hek, \: \nu \in \Sk. \end{displaymath}
Then, we write
\begin{align*} \Phi_{p,\nu}(A_{\nu}) & = \{\Phi_{p,\nu}(q) \in \W_{\nu} : q \cdot \LL_{\nu} \in \mathcal{A}\}\\
& = \{q \in \W_{\nu} : \Phi_{p^{-1},\nu}(q) \cdot \LL_{\nu} \in \mathcal{A}\}\\
& = \{q \in \W_{\nu} : \pi_{\W_{\nu}}(p^{-1} \cdot q) \cdot \LL_{\nu} \in \mathcal{A}\}\\
& = \{q \in \W_{\nu} : q \cdot \LL_{\nu} \in p \cdot \mathcal{A}\},
\end{align*}
using \eqref{form24-H2k+1} in the passage to the second line. Consequently, by \eqref{form23-H2k+1},
\begin{equation*}
\begin{split}
\h(\mathcal{A}) &= \int_{\Sk} \calHk (\Phi_{p,\nu}(A_{\nu})) \, d\nu \\
& = \int_{\Sk} \calHk (\{q \in \W_{\nu} : q \cdot \LL_{\nu} \in p \cdot \mathcal{A}\}) \, d\nu = \h(p \cdot \mathcal{A}),
\end{split}
\end{equation*}
as claimed.
\end{proof}

%%%
\subsection{Notations} For $A,B > 0$, we write $A \lesssim_k B$ to mean that there is a constant $C >0$ whose value depends only on the dimensional parameter $k$ of $\Hek$ such that $A \leq CB$. We write $A \lesssim_{reg} B$ to mean that the value of $C$ is allowed to depend also on the regularity, that is, upper/lower Ahlfors-regularity and/or Condition~\textbf{B}, constants of the set or measure under consideration, in addition to the dimensional parameter $k$. Given an auxiliary parameter $h$, distinct from the dimensional parameter $k$, we write $A \lesssim_{h} B$ to mean that the value of $C$ is allowed to depend also on $h$, in addition to the data mentioned above.   We abbreviate the two-sided inequalities $A \lesssim_k B \lesssim_k A$, $A \lesssim_{reg} B \lesssim_{reg} A$, and $A \lesssim_{h} B \lesssim_{h} A$ by $A \sim_k B$, $A \sim_{reg} B$ and $A \sim_h B$ respectively. We shall also often shorten the terminology, saying that a constant depends only on the upper/lower Ahlfors-regularity and/or Condition~\textbf{B} constants of a given set, and/or on other auxiliary parameters, when the constant may also depend on the dimensional parameter $k$.

%%%%%%%%%%%%%%%%%%

\section{Approximating Semmes surfaces with small width by hyperplanes}\label{sect:main-approximation}

In this section, we introduce the notion of \textit{horizontal width} and prove a bilateral  approximation by hyperplanes for Semmes surfaces with small width. See Corollary~\ref{cor:main} that will be a key tool in the proof of both BVP and BWGL for Semmes surfaces.

%%%

\subsection{Horizontal width} \label{subsect:width}

Given a horizontal line $\ell \in \calL$, we define the \emph{horizontal width} of a set $E\subset \Hek$ with respect to $\ell$ in a ball $B$ by
\begin{displaymath} \wid_{B}(E,\ell) := \diam(B \cap E \cap \ell). \end{displaymath}

Recall that balls in this paper are closed, unless otherwise specified. The definition of width is inspired by the notions of non-convexity and non-monotonicity introduced by Cheeger, Kleiner and Naor in \cite[Section 4.2]{CKN}. Given $\ell \in \calL$, a set $A \subset \Hek$ such that $A \cap \ell$ is $\calH^{1}$ measurable, and a ball $B \subset \Hek$, the \emph{non-convexity} $\NC_{B}(A,\ell)$ of $A$ with respect to $\ell$ on $B$ is defined by
\begin{displaymath} \NC_{B}(A,\ell) := \inf \left\{ \int_{B \cap \ell} |\chi_{A} - \chi_{I}| \, d\calH^{1} : I \subset \ell \text{ is an interval }\right\} \end{displaymath}
where we allow the interval $I$ to be empty in the infimum above. The \emph{non-monotonicity} $\NM_{B}(A,\ell)$ of $A$ with respect to $\ell$ on $B$ is then defined by
\begin{equation*} \label{e:def-non-monotonicity}
 \NM_{B}(A,\ell) := \NC_{B}(A,\ell) + \NC_{B}(A^c,\ell).
\end{equation*}

For a fixed line $\ell \in \calL$, the relationship between the non-monotonicity of a set and the horizontal width of its boundary is given by Lemma \ref{lemma1} below. We start with the following observation regarding non-convexity. Recall that $\calL(A) := \{\ell \in \calL : \ell \cap A \neq \emptyset\}$ denotes the set of horizontal lines meeting $A\subset\Hek$.

\begin{lemma}\label{l:convex}
Let $A \subset \Hek$, let $B$ be a ball, and let $\ell \in  \calL(B\cap \partial A)$ be a line such that $A \cap \ell$ is $\calH^{1}$ measurable. Then,
\begin{displaymath} \NC_{B}(A^c,\ell) \leq \calH^1(A \cap J), \end{displaymath}
where, picking any orientation for $\ell$, we denote by $J$ the segment from $\min (B \cap \partial A \cap \ell)$ to $\max (B \cap \partial A \cap \ell)$.
\end{lemma}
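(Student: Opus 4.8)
The plan is to produce an explicit competitor interval $I \subset \ell$ in the infimum defining $\NC_B(A^c,\ell)$ for which $\int_{B\cap\ell}|\chi_{A^c}-\chi_I|\,d\calH^1$ is at most (in fact equal to) $\calH^1(A\cap J)$.

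First I would pass to a one-dimensional picture. Using the chosen orientation and the fact that $\ell$ with the Kor\'anyi metric is isometric to $\R$, parametrize $\ell$ by arc length as $s\mapsto q(s)$; after a left translation (which changes neither the Kor\'anyi metric nor the property of $B$ being a ball) we may assume $\ell=\LL_\nu$ and $q(s)=(s\nu,0)$. If $p=(w,u)$ denotes the centre of $B$, then $d(q(s),p)^4 = (s^2-2s\langle\nu,w\rangle+|w|^2)^2 + 16\,(u+s\,\omega(w,\nu)/2)^2$, which is the sum of the square of a nonnegative convex quadratic in $s$ and the square of an affine function of $s$, hence convex and coercive. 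Therefore $K:=B\cap\ell$ is a bounded closed interval $[\alpha,\beta]$, nonempty since $\ell\in\calL(B\cap\partial A)$. Set $a:=\min(B\cap\partial A\cap\ell)$ and $b:=\max(B\cap\partial A\cap\ell)$, which exist because $B\cap\partial A\cap\ell$ is nonempty and, as a closed subset of $K$, compact; then $J=[a,b]\subset[\alpha,\beta]$.

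The key observation is that $[\alpha,a)$ and $(b,\beta]$ contain no point of $\partial A$, since such a point would lie in $B\cap\partial A\cap\ell$ and contradict the definition of $a$ or of $b$. As $\Hek=\interior(A)\sqcup\partial A\sqcup\interior(A^c)$ and $[\alpha,a)$, $(b,\beta]$ are connected, each of these two half-open intervals lies entirely in $A$ or entirely in $A^c$. Accordingly, define the competitor $I$ to be the union of $[a,b]$ together with $[\alpha,a)$ in case $[\alpha,a)\subset A^c$, and together with $(b,\beta]$ in case $(b,\beta]\subset A^c$; in each of the four cases this is a subinterval of $\ell$, hence an admissible choice in the infimum.

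It then remains to split $\int_{B\cap\ell}|\chi_{A^c}-\chi_I|\,d\calH^1$ along $K=[\alpha,a)\cup[a,b]\cup(b,\beta]$ (the two endpoints are $\calH^1$-null). On $[\alpha,a)$ the integrand vanishes: if $[\alpha,a)\subset A^c$ then $\chi_{A^c}=1=\chi_I$ there by construction, while if $[\alpha,a)\subset A$ then $\chi_{A^c}=0$ and also $\chi_I=0$ there since the extra pieces forming $I$ are disjoint from $[\alpha,a)$; the same applies on $(b,\beta]$. On $[a,b]$ we have $\chi_I\equiv 1$, so $|\chi_{A^c}-\chi_I|=|1-\chi_{A^c}|=\chi_A$ there, contributing $\calH^1(A\cap[a,b])=\calH^1(A\cap J)$. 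Summing gives $\int_{B\cap\ell}|\chi_{A^c}-\chi_I|\,d\calH^1=\calH^1(A\cap J)$, and passing to the infimum over competitors yields $\NC_B(A^c,\ell)\le\calH^1(A\cap J)$. The only point that is not purely formal is the convexity computation showing that $B\cap\ell$ is an interval: this is exactly what makes ``to the left of $a$'' and ``to the right of $b$'' connected so that the side dichotomy applies; the degenerate cases $\alpha=a$, $b=\beta$, or $a=b$ are immediate, the relevant pieces being empty or $\calH^1$-null.
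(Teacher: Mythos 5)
Your proof is correct and follows essentially the same strategy as the paper's: both exhibit an explicit interval competitor and exploit the fact that, since $B\cap\ell$ is an interval and the portions of it lying strictly outside $J$ contain no point of $\partial A$, each such portion lies entirely in $A$ or entirely in $A^{c}$. The only difference is the choice of competitor --- the paper takes $I$ to be the convex hull of $B\cap\ell\cap A^{c}$ and proves the pointwise bound $|\chi_{A^{c}}-\chi_{I}|\leq\chi_{A\cap I}\leq \chi_{A\cap J}$, whereas you adjoin to $J$ exactly those exterior pieces contained in $A^{c}$ and compute the integral to be exactly $\calH^{1}(A\cap J)$; both are valid.
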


\begin{proof}
Note that since $B$ is Euclidean convex when identifying $\Hek$ with $\Rk\times \R$, the set $B\cap \ell$ is an interval. We may assume that $B \cap A^{c} \cap \ell \neq \emptyset$, otherwise $\NC_{B}(A^c,\ell)$ is trivially zero. Let $I$ be the (possibly degenerate) segment from $\inf (B \cap \ell \cap A^{c})$ to $\sup (B \cap \ell \cap A^{c})$.
 We observe that
\begin{equation}\label{form10} |\chi_{A^{c}}(p) - \chi_{I}(p)| \leq \chi_{A \cap I}(p), \qquad p \in B \cap \ell. \end{equation}
Indeed, if the expression on the left-hand side is non-zero, then either $p \in A^{c} \setminus I$ or $p \in I \setminus A^{c}$. But the first case cannot occur, since $B \cap A^{c} \cap \ell \subset I$ by definition of $I$. In the second case, $p \in A \cap I$ as claimed.

\begin{figure}[h!]
\begin{center}
\includegraphics[scale = 0.5]{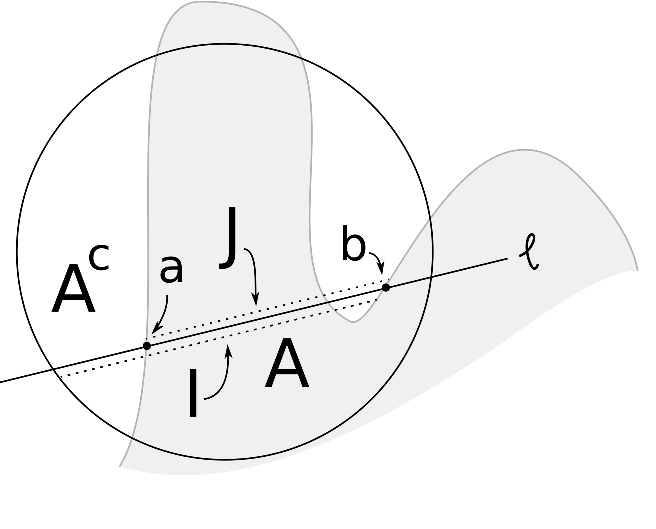}
\caption{The set $A$, and segments $I$ and $J$.}\label{fig1}
\end{center}
\end{figure}

Next, we note that
\begin{equation}\label{form11} A \cap I \subset A \cap J. \end{equation}
To see this, it suffices to argue that $I \setminus J \subset A^{c}$. We write $a:=\min (B \cap \partial A \cap \ell)$ and $b:=\max (B \cap \partial A \cap \ell)$ so that $J=[a,b]$, see Figure \ref{fig1}. Note that $I \setminus J$ consists of at most two half-open segments $I^{l}$ and $I^{r}$; one to the left of $a$, and one to the right from $b$. Since there are no points of $B\cap \partial A$ to the left of $a$, the segment between $\min (B \cap \ell)$ and $a$ -- including $I^{l}$ -- lies entirely in $A$, or in $A^{c}$. In the latter case we are done. In the former case, the definition of $I$ implies that $\min I \geq a$, and hence $I^{l} = \emptyset$. A similar reasoning shows that $I^{r} \subset A^{c}$.

Now, we may combine \eqref{form10} and \eqref{form11} to deduce
 \begin{align*} \NC_{B}(A^c,\ell) \leq \int_{B \cap \ell} |\chi_{A^{c}} - \chi_{I}| \, d\calH^{1} \leq \calH^{1}(A \cap J), \end{align*}
which concludes the proof of the lemma.
\end{proof}

\begin{lemma}\label{lemma1} Let $A \subset \Hek$, let $B$ be a ball, and let $\ell \in \calL$ such that $A \cap \ell$ is $\calH^{1}$ measurable. Then,
\begin{displaymath} \NM_{B}(A,\ell) \leq \wid_{B}(\partial A,\ell). \end{displaymath}
\end{lemma}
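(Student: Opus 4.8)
The plan is to reduce the statement to Lemma~\ref{l:convex}, applied once to $A$ and once to $A^{c}$, together with the elementary observation that along a horizontal line the topological boundary $\partial A$ is unaffected by swapping $A$ and $A^{c}$, and that its intersection with $B\cap\ell$ lies inside the segment witnessing the width. Fix $A$, the ball $B$, and a horizontal line $\ell\in\calL$ with $A\cap\ell$ being $\calH^{1}$ measurable (equivalently $A^{c}\cap\ell$ is $\calH^{1}$ measurable, since the two differ by the $\calH^{1}$-measurable sets $A\cap\ell$ and $\ell$). If $B\cap\partial A\cap\ell=\emptyset$, then $B\cap\ell$ is an interval (being Euclidean convex) that lies entirely in $A$ or entirely in $A^{c}$; in either case one of $\NC_{B}(A,\ell)$, $\NC_{B}(A^{c},\ell)$ is zero by taking $I=B\cap\ell$ and the other is zero by taking $I=\emptyset$, so $\NM_{B}(A,\ell)=0\le\wid_{B}(\partial A,\ell)$ and there is nothing to prove. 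So assume $\ell\in\calL(B\cap\partial A)$.

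Now pick any orientation on $\ell$ and set $a:=\min(B\cap\partial A\cap\ell)$, $b:=\max(B\cap\partial A\cap\ell)$ (these exist because $B\cap\partial A\cap\ell$ is a nonempty compact set: $B$ is closed, $\partial A$ is closed, and $\ell$ is a closed subset isometric to $\R$), and let $J:=[a,b]$ be the corresponding segment, so that $\calH^{1}(J)=\diam(B\cap\partial A\cap\ell)=\diam(B\cap\overline{\partial A}\cap\ell)=\wid_{B}(\partial A,\ell)$ — here I use that $\partial A$ is closed so $B\cap\partial A\cap\ell$ has the same diameter as its defining endpoints, and along the horizontal line $\calH^{1}$ of a segment equals its diameter. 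Apply Lemma~\ref{l:convex} to the set $A$: it gives $\NC_{B}(A^{c},\ell)\le\calH^{1}(A\cap J)$. Apply Lemma~\ref{l:convex} to the set $A^{c}$: since $\partial(A^{c})=\partial A$, the segment $J$ is unchanged, and we get $\NC_{B}(A,\ell)=\NC_{B}((A^{c})^{c},\ell)\le\calH^{1}(A^{c}\cap J)$. Adding the two inequalities,
\begin{displaymath}
\NM_{B}(A,\ell)=\NC_{B}(A,\ell)+\NC_{B}(A^{c},\ell)\le\calH^{1}(A^{c}\cap J)+\calH^{1}(A\cap J)=\calH^{1}(J)=\wid_{B}(\partial A,\ell),
\end{displaymath}
which is the claim.

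There is essentially no hard part here; the content has already been extracted into Lemma~\ref{l:convex}. The only points requiring a moment of care are: (i) checking that $A^{c}\cap\ell$ is $\calH^{1}$ measurable so that Lemma~\ref{l:convex} is applicable to $A^{c}$ — this follows since $\ell$, being isometric to $\R$, is $\calH^{1}$ measurable with $\sigma$-finite measure on each bounded piece, so $A^{c}\cap\ell=\ell\setminus(A\cap\ell)$ is measurable; (ii) the trivial-case bookkeeping when $\ell$ does not meet $B\cap\partial A$; and (iii) the identification $\calH^{1}(J)=\wid_{B}(\partial A,\ell)$, which uses that on a horizontal line $\calH^{1}$ coincides with the Euclidean length of segments and that $B\cap\partial A\cap\ell$ is compact so its diameter is realized by $a$ and $b$. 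I would present (ii) first to dispose of it, then the main two-line computation.
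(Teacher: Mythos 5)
Your proof is correct and is essentially the paper's own argument: both reduce to Lemma~\ref{l:convex} applied to $A$ and to $A^{c}$, use $\partial A = \partial A^{c}$ so that the segment $J$ is the same in both applications, and add the two bounds to get $\calH^{1}(J) = \wid_{B}(\partial A,\ell)$. The extra bookkeeping you supply (measurability of $A^{c}\cap\ell$, the trivial case $\ell\notin\calL(B\cap\partial A)$) is fine but not a departure from the paper's route.
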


\begin{proof}
 Assume that $\ell \in \calL(B\cap \partial A)$, otherwise the lemma is trivial. Pick any orientation for $\ell$ and write $J$ for the segment from $a:=\min (B \cap \partial A \cap \ell)$ to $b:=\max (B \cap \partial A \cap \ell)$. Since $\ell\in\calL$ is a horizontal line, we have $\calH^{1}(J) = \mathrm{diam}(J) = \wid_{B}(\partial A,\ell)$. Then apply Lemma \ref{l:convex} to $A$ and $A^c$. Since $\partial A = \partial A^c$, we obtain
 \begin{equation*}\label{eq:NC_bound_J}
  \NC_{B}(A^c,\ell) \leq \calH^1(A \cap J)\quad\text{and}\quad
   \NC_{B}(A,\ell) \leq \calH^1(A^c \cap J),
 \end{equation*}
and hence,
 \begin{displaymath}
  \NM_{B}(A,\ell) =  \NC_{B}(A,\ell) + \NC_{B}(A^c,\ell) \leq \calH^1(J) = \wid_{B}(\partial A,\ell),
 \end{displaymath}
as claimed. \end{proof}

Next, following~\cite{CKN}, we define the \emph{non-monotonicity} of a measurable set $A \subset \Hek$ in a ball $B$ of radius $r > 0$ by
\begin{equation} \label{NMDef}
\NM_{B}(A) := \frac{1}{r^{2k+2}} \int_{\calL} \NM_{B}(A,\ell) \, d\h(\ell)  .
\end{equation}
The quantity $\NM_{B}(A)$ is invariant under left translations and scaling (of both $B$ and $A$ simultaneously). We remark, leaving the details to the reader, that if $A \subset \Hek$ is measurable, then $A \cap \ell$ is $\calH^{1}$ measurable for $\h$ almost every $\ell \in \calL$, so $\NM_{B}(A,\ell)$ is well defined $\h$ almost surely, and the map $\ell\in \calL \mapsto \NM_{B}(A,\ell)$ is $\h$-measurable.

In a similar fashion, for a closed set $E \subset \Hek$, we define the \emph{horizontal width} of $E$ in a ball $B$ of radius $r > 0$ by
\begin{equation}\label{e:def-totalwidth}
\wid_{B}(E) := \frac{1}{r^{2k+2}} \int_{\calL} \wid_{B}(E,\ell) \, d\h(\ell),
\end{equation}
We justify briefly the $\h$-measurability of the map $\ell \in \calL \mapsto \wid_{B}(E,\ell)$. Since we assume that $E$ is closed, the intersection $E \cap B$ is compact. Hence, for any $n \in \N$, we can cover $E \cap B$ by finitely many balls of radius $1/n$. If the union of these balls is denoted by $E_{n}$, then it is clear that $\ell \in \calL \mapsto \wid_{B}(E_{n},\ell)$ is $\h$-measurable, and $\wid_{B}(E_{n},\ell) \searrow \wid_{B}(E,\ell)$ for any fixed line $\ell \in \calL$, by the compactness of $E \cap B$. Hence $\ell \in \calL \mapsto \wid_{B}(E,\ell)$ is $\h$-measurable.

By Lemma \ref{lemma1}, every measurable set $A \subset \Hek$ satisfies $\NM_{B}(A) \leq \wid_{B}(\partial A)$. We apply now this inequality to the connected components of the complement of a given closed set $E$, noting that their boundaries are contained in $E$.

\begin{proposition}\label{form1}
Given a closed set $E\subset \Hek$, one has
\begin{equation*} \NM_{B}(\Omega) \leq \wid_{B}(E) \,\,\text{ for all connected  components } \Omega \text{ of } E^c. \end{equation*}
\end{proposition}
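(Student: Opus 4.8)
The plan is to derive this directly from Lemma~\ref{lemma1} together with the integral definitions of non-monotonicity and horizontal width. Fix a closed set $E\subset\Hek$ and let $\Omega$ be any connected component of $E^c$. The first observation is that $\Omega$ is open (connected components of an open set are open) and hence measurable, so $\NM_B(\Omega)$ is well defined; moreover $\partial\Omega\subset E$. To see this inclusion, note that a point $p\in\partial\Omega$ cannot lie in $\Omega$ (which is open), and if $p$ lay in some other component $\Omega'$ of $E^c$ then $p$ would have a neighbourhood contained in $\Omega'$, disjoint from $\Omega$, contradicting $p\in\overline{\Omega}$; hence $p\notin E^c$, i.e. $p\in E$.

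Next I would apply Lemma~\ref{lemma1} with $A=\Omega$: for every ball $B$ and every line $\ell\in\calL$ for which $\Omega\cap\ell$ is $\calH^1$-measurable, we have
\begin{displaymath}
\NM_B(\Omega,\ell)\leq \wid_B(\partial\Omega,\ell).
\end{displaymath}
Since $\partial\Omega\subset E$, the set $B\cap\partial\Omega\cap\ell$ is contained in $B\cap E\cap\ell$, so $\diam(B\cap\partial\Omega\cap\ell)\leq\diam(B\cap E\cap\ell)$, that is, $\wid_B(\partial\Omega,\ell)\leq\wid_B(E,\ell)$ for every $\ell\in\calL$. Combining the two pointwise inequalities gives $\NM_B(\Omega,\ell)\leq\wid_B(E,\ell)$ for $\h$-almost every $\ell$ (the measurability caveat for $\NM_B(\Omega,\ell)$ being exactly the one recorded after~\eqref{NMDef}, valid since $\Omega$ is measurable).

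Finally, integrate this inequality over $\calL$ against $\h$ and divide by $r^{2k+2}$, where $r$ is the radius of $B$: the left-hand side becomes $\NM_B(\Omega)$ by~\eqref{NMDef}, the right-hand side becomes $\wid_B(E)$ by~\eqref{e:def-totalwidth}, and the $\h$-measurability of $\ell\mapsto\wid_B(E,\ell)$ needed to make sense of the right-hand integral was already justified in the text preceding this proposition. This yields $\NM_B(\Omega)\leq\wid_B(E)$, as desired. There is no real obstacle here; the only point requiring a word of care is the elementary topological fact that the boundary of a connected component of the open set $E^c$ is contained in $E$, and the bookkeeping that the almost-everywhere measurability hypotheses in Lemma~\ref{lemma1} and in definition~\eqref{NMDef} are met because $\Omega$ is open, hence measurable.
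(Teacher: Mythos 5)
Your proof is correct and follows exactly the route the paper takes: apply Lemma~\ref{lemma1} to the component $\Omega$, observe that $\partial\Omega\subset E$ so that $\wid_B(\partial\Omega,\ell)\leq\wid_B(E,\ell)$, and integrate over $\calL$ against $\h$. The extra care you take with the topological inclusion $\partial\Omega\subset E$ and the measurability bookkeeping is welcome but adds nothing beyond what the paper leaves implicit.
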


Next, we recall that, given $\ell \in \calL$, a set $I\subset \ell$ is said to be a \emph{monotone} subset of $\ell$ if its characteristic function is a monotone function on $\ell$, up to a  null set. Equivalently, up to a null set, $I$ and $\ell \setminus I$ are intervals. As the terminology suggests, the non-monotonicity of a set $A\subset \Hek$ with respect to a horizontal line $\ell\in\calL$ on a ball $B$ gives a way to measure how $A\cap \ell$ differs from being a monotone subset of $\ell$ inside $B$, see Lemma~\ref{lem:non-monotonicity-vs-monotone-sets}. A set $A\subset \Hek$ is said to be \emph{monotone} if for $\h$ almost all $\ell \in\calL$, the intersection $A\cap \ell$ is a monotone subset of $\ell$. These definitions are due to Cheeger and Kleiner~\cite{CK}. For the sake of completeness, we recall below the classification of monotone sets in $\Hek$ proved in~\cite{CK} for $k=1$ and in~\cite{NY} for $k\geq 2$.

\begin{thm}\cite[Theorem~5.1 ($k=1$)]{CK} \cite[Proposition~65 ($k \geq 2$)]{NY} \label{thm:classification-monotone-sets}
If a measurable set $A \subset \Hek$ is monotone, then, up to an $\calH^{2k+2}$ null set, either $A=\emptyset$, $A=\Hek$ or $A$ is a half-space.
\end{thm}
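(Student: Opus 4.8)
\textbf{Plan of proof for Theorem~\ref{thm:classification-monotone-sets}.}
The statement to be proved is the classification of monotone sets: a monotone measurable set in $\Hek$ is, up to null sets, empty, all of $\Hek$, or a half-space. Since this is precisely the statement of \cite[Theorem~5.1]{CK} for $k=1$ and \cite[Proposition~65]{NY} for $k\geq 2$, and since the excerpt explicitly says ``we recall below the classification,'' I do not expect an independent proof here; the natural thing is to cite those two references and sketch the idea rather than reproduce it. So the plan is really to explain the structure of the argument those papers carry out.

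First I would reduce to a local statement. Monotonicity is preserved under left translations and dilations, and it is inherited by restrictions in the sense that the trace on $\h$-almost every horizontal line is a monotone subset of that line. The key soft input is that, because $\Hek$ contains horizontal lines in every direction through every point and the measure $\h$ sees all of them (formula~\eqref{e:def-meas-h-H2k+1}), the condition ``$A\cap\ell$ is, up to a null set, a sub-ray or the whole line or empty, for $\h$-a.e.\ $\ell$'' is a rigid constraint: along each line the boundary meets in at most one point. One then shows that the (measure-theoretic) boundary $\partial_*A$ is, in a precise sense, ruled by these half-lines and cannot bend.

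The heart of the matter — and what I expect to be the main obstacle, and the reason the result is genuinely deep rather than routine — is the passage from ``every horizontal slice is monotone'' to ``$\partial_*A$ is a vertical hyperplane or horizontal hyperplane.'' In $k=1$ this is the Cheeger–Kleiner analysis \cite{CK}: using the group structure and a compactness/blow-up argument one shows the boundary is a minimal surface of a very constrained type, and then a direct PDE-type rigidity (the boundary, being monotone along all horizontal lines, satisfies an equation forcing it to be affine) pins it down to a half-space. For $k\geq 2$ Naor and Young \cite{NY}须 handle additional directions but also gain room: they run a similar scheme, exploiting that in higher-dimensional Heisenberg groups there are more horizontal lines to test against, which over-determines the boundary and again forces it to be a coset of a codimension-one homogeneous subgroup bounding a half-space. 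The technical core in both cases is a \emph{stability}/\emph{rigidity} statement — monotone sets are exactly half-spaces, with no pathological examples — and it relies on the precise interplay between the symplectic form $\omega$ in the group law and the family of horizontal lines.

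In short, I would write: ``This is \cite[Theorem~5.1]{CK} when $k=1$ and \cite[Proposition~65]{NY} when $k\geq 2$; we only indicate the idea. After reducing by left-invariance and dilation-invariance of monotonicity to a blow-up, one shows that the measure-theoretic boundary of a monotone set meets $\h$-a.e.\ horizontal line in at most one point, hence is foliated by horizontal half-lines; a rigidity argument using the Heisenberg group law then forces this boundary to be a vertical or horizontal hyperplane, so that $A$ is a half-space up to a null set.'' The genuine content lives in \cite{CK,NY} and I would not attempt to reproduce it.
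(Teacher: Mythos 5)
Your proposal matches the paper exactly: the paper gives no proof of this statement, presenting it purely as a recalled external result with the citations \cite[Theorem~5.1]{CK} (for $k=1$) and \cite[Proposition~65]{NY} (for $k\geq 2$), which is precisely what you propose to do. Your accompanying sketch of the arguments in those references is inessential to the logic of the paper (and somewhat loose in its description of the Cheeger--Kleiner analysis), but the substance --- deferring entirely to \cite{CK,NY} --- is the correct and intended treatment.
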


Here a half-space denotes an open subset of $\Hek$ whose boundary is an affine hyperplane when identifying $\Hek$ with $\Rk\times\R$ as a real vector space.

\begin{remark}  \label{rmk:non-monotonicity}

A slightly different definition of non-monotonicity, denoted by $\widetilde{\text{NM}}_{B}(A,\ell)$ below, is given in~\cite{NY}. Namely, given $\ell \in \calL$, a measurable set $A \subset \Hek$ such that $A \cap \ell$ is $\calH^{1}$ measurable, and a ball $B \subset \Hek$,
\begin{displaymath} \widetilde{\text{NM}}_{B}(A,\ell) := \inf \left\{ \int_{B \cap \ell} |\chi_{A} - \chi_{I}| \, d\calH^{1} : I \text{ is a  monotone subset of } \ell\right\}, \end{displaymath}
and
\begin{equation*}
\widetilde{\text{NM}}_{B}(A) := \frac{1}{r^{2k+2}} \int_{\calL} \widetilde{\text{NM}}_{B}(A,\ell) \, d\h(\ell)
\end{equation*}
where $r>0$ denotes the radius of $B$. As an immediate consequence of the definitions, one has $\NC_B(A,l) \leq \widetilde{\text{NM}}_{B}(A,\ell)$ and $\widetilde{\text{NM}}_{B}(A,\ell) =\widetilde{\text{NM}}_{B}(A^c,\ell)$, hence $\NM_B(A) \leq 2\, \widetilde{\text{NM}}_{B}(A)$. Conversely, Lemma~\ref{lem:non-monotonicity-vs-monotone-sets} below shows that $\widetilde{\text{NM}}_{B}(A) \leq \NM_B(A)$. Hence the two notions of non-monotonicity are comparable.

\end{remark}

\begin{lemma} \label{lem:non-monotonicity-vs-monotone-sets} Let $A\subset \Hek$, let $B$ be a ball, and let $\ell\in\calL$ such that $A \cap \ell$ is $\calH^{1}$ measurable. For every $\epsilon >0$, there is a monotone subset $I$ of $\ell$ such that
\begin{equation*} \label{e:non-monotonicity-vs-monotone-sets}
\int_{B \cap \ell} |\chi_{A} - \chi_{I}| \, d\calH^{1} \leq \NM_B(A,\ell) + \epsilon.
\end{equation*}
\end{lemma}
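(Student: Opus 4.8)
The plan is to produce, for each $\epsilon > 0$, a single interval $I_0 \subset \ell$ that nearly realizes $\NC_B(A,\ell)$, and then show that either $I_0$ or its complement in $\ell$ (whichever is "on the correct side") can be replaced by a genuinely monotone subset of $\ell$ whose symmetric difference with $A$ inside $B$ is controlled by $\NC_B(A,\ell) + \NC_B(A^c,\ell) = \NM_B(A,\ell)$. First I would reduce to the case $\ell \in \calL(B \cap \partial A)$ and $\calH^1(A \cap \ell \cap B), \calH^1(A^c \cap \ell \cap B) < \infty$ on the relevant interval $B \cap \ell$, which is a genuine interval since $B$ is Euclidean-convex; the degenerate cases ($A \cap \ell$ or $A^c \cap \ell$ essentially full inside $B$) are handled by taking $I = \ell$ or $I = \emptyset$, both of which are monotone subsets.

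In the main case, I would pick an orientation on $\ell$ and, for the given $\epsilon$, choose an interval $I_A \subset \ell$ with $\int_{B \cap \ell} |\chi_A - \chi_{I_A}|\, d\calH^1 \leq \NC_B(A,\ell) + \epsilon/2$ and an interval $I_{A^c} \subset \ell$ with $\int_{B \cap \ell} |\chi_{A^c} - \chi_{I_{A^c}}|\, d\calH^1 \leq \NC_B(A^c,\ell) + \epsilon/2$. Write $I_A = [\alpha, \beta]$ and $I_{A^c} = [\gamma, \delta]$ (intersected with $B \cap \ell$). The key structural observation is that the candidate monotone set should agree with $A$ far to one side and with $A^c$ far to the other side; concretely, I would build $I := (B \cap \ell) \cap (-\infty, m]$ or $I := (B \cap \ell) \cap [m, \infty)$ for a well-chosen cut point $m$, so that $I$ and its complement in $\ell$ are intervals (hence $I$ is monotone). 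The natural choice is to take the cut between the essential "bulk" of $I_{A^c}$ and the essential "bulk" of $I_A$: since $\chi_{I_A}$ and $\chi_{I_{A^c}}$ are close to $\chi_A$ and $\chi_{A^c}$ respectively, on the region $B\cap\ell$ the two intervals $I_A$ and $I_{A^c}$ are nearly complementary, so one of the two rays $(-\infty,m]$, $[m,\infty)$ (with $m$ an endpoint shared "approximately" by both) will differ from $A$ on $B \cap \ell$ by at most the error incurred in approximating $A$ by $I_A$ on the $A$-side plus the error in approximating $A^c$ by $I_{A^c}$ on the $A^c$-side, i.e. at most $\NC_B(A,\ell) + \NC_B(A^c,\ell) + \epsilon = \NM_B(A,\ell) + \epsilon$.

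The technical heart is making the phrase "one of the two rays works" precise without case-analysis gymnastics. I would do this by an averaging/pigeonhole argument: parametrize $B \cap \ell$ by an interval $[p,q] \subset \R$ via the isometry with the real line, and consider the function $f(m) := \int_{[p,q]} |\chi_A - \chi_{[p,m]}|\, d\calH^1$. One checks $f(p) = \calH^1(A \cap [p,q])$, $f(q) = \calH^1(A^c \cap [p,q])$, and, comparing with the near-optimal $I_A$, that $\min_m f(m) \leq \NC_B(A,\ell) + \epsilon/2 + (\text{the part of } A^c \text{ inside } I_A)$ — and symmetrically for $[m,q]$, using $I_{A^c}$. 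Adding the two bounds and using that $A$ and $A^c$ partition $B \cap \ell$, the relevant cross-terms telescope and one obtains $\min\{\min_m f(m),\ \min_m g(m)\} \leq \NM_B(A,\ell) + \epsilon$ where $g$ is the analogue for right-rays; then $I$ is the corresponding ray (or its complement), which is monotone. I expect the main obstacle to be purely bookkeeping: tracking which of $I_A$, $I_{A^c}$ sits on which side and ensuring the estimate does not double-count the overlap $I_A \cap I_{A^c} \cap (B\cap\ell)$; this is resolved by noting that on this overlap $|\chi_A - \chi_{I_A}| + |\chi_{A^c} - \chi_{I_{A^c}}| \geq 1$ pointwise (since $\chi_{I_A} = \chi_{I_{A^c}} = 1$ there while $\chi_A + \chi_{A^c} = 1$), so the overlap is already "paid for" by $\NM_B(A,\ell) + \epsilon$ and can be excised when forming the ray.
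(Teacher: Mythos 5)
Your proposal is correct and follows essentially the same route as the paper's proof: pick near-optimal intervals $I_1$ for $\NC_B(A,\ell)$ and $I_2$ for $\NC_B(A^c,\ell)$, form a monotone ray by cutting at an endpoint of one of them, and bound the error by splitting $B\cap\ell$ at the cut and charging one side to $I_1$ and the other to $I_2$ (your ``averaging'' reduces to the same two-case comparison of $\min I_1$ versus $\min I_2$ that the paper carries out explicitly). The only cosmetic differences are that the extra term in your $f(\beta)$ is the part of $A^c$ to the \emph{left} of $I_A$ rather than inside it, and the overlap-excision worry at the end never actually arises since the two integration regions are disjoint.
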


\begin{proof} Assume that $\ell \in \calL(B)$, otherwise $I$ can obviously be taken to be the empty set. Let $I_1, I_2 \subset \ell$ be intervals such that
\begin{equation*}
\int_{B \cap \ell} |\chi_{A} - \chi_{I_1}| \, d\calH^{1} \leq \NC_B(A,\ell) + \epsilon / 2,
\end{equation*}
\begin{equation*}
\int_{B \cap \ell} |\chi_{A^c} - \chi_{I_2}| \, d\calH^{1} \leq \NC_B(A^c,\ell) + \epsilon / 2.
\end{equation*}
Pick any orientation for $\ell$ and let $a:=\min (B \cap \ell)$, $b:=\max (B \cap \ell)$. If $a\in I_1$, then $I:= (-\infty, \max I_1]$ is a monotone subset of $\ell$ with $I\cap B = I_1 \cap B$. Hence $I$ gives the required conclusion by the choice of $I_1$. Similarly, if $b\in I_1$, then $I:=[\min I_1,+\infty)$ gives the required the conclusion. If $a\in I_2$, then $J:=(-\infty, \max I_2]$ is a monotone subset of $\ell$ with $J\cap B = I_2 \cap B$. Hence, setting $I:=J^c$, we get
\begin{equation*}
\int_{B \cap \ell} |\chi_{A} - \chi_{I}| \, d\calH^{1} =  \int_{B \cap \ell} |\chi_{A^c} - \chi_{I_2}| \, d\calH^{1},
\end{equation*}
and $I$ gives the required conclusion by the choice of $I_2$. Similarly, if $b\in I_2$, then $I:=(-\infty, \min I_2]$ gives the required conclusion.
It remains now to consider the case where $I_1$ and $I_2$ are strict non-empty subintervals of $B\cap \ell$. If $\min I_1 \leq \min I_2$, see Figure \ref{fig5}, we set $I:=(-\infty,\max I_1]$. We have $[a,\min I_1) \subset I_2^c \cap I$, hence,
\begin{equation*}
\begin{split}
\int_{B \cap \ell} |\chi_{A} - \chi_{I}| \, d\calH^{1} &\leq \int_{[a,\min I_1]} |\chi_{A} - \chi_{I_2^c}|\, d\calH^{1} + \int_
{[\min I_1,b]} |\chi_{A} - \chi_{I_1}|\, d\calH^{1}\\
& \leq \int_{B \cap \ell} |\chi_{A^c} - \chi_{I_2}| \, d\calH^{1} + \int_
{B \cap \ell} |\chi_{A} - \chi_{I_1}|\, d\calH^{1}\\
& \leq \NM_B(A,\ell) + \epsilon
\end{split}
\end{equation*}
by the choice of $I_1$ and $I_2$.
\begin{figure}[h!]
\begin{center}
\includegraphics[scale = 0.6]{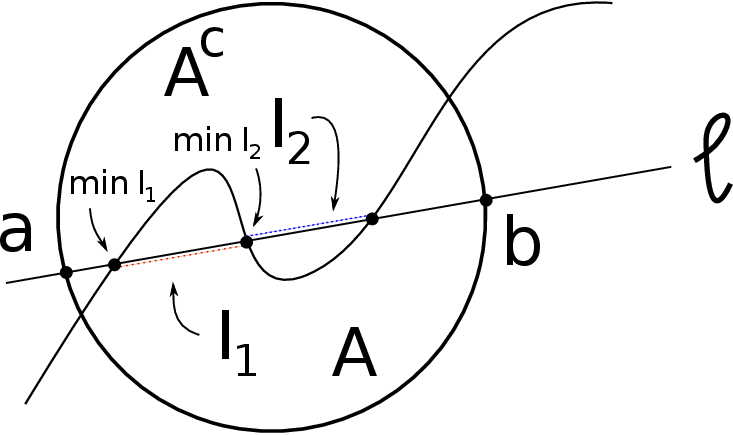}
\caption{The intervals $I_{1}$ and $I_{2}$.}\label{fig5}
\end{center}
\end{figure}
If $\min I_2 < \min I_1$, we set $I:=[\max I_2,+\infty)$. We have $[a,\min I_2) \subset I_1^c \cap I^c$, hence,
\begin{equation*}
\begin{split}
\int_{B \cap \ell} |\chi_{A} - \chi_{I}| \, d\calH^{1} &\leq \int_{[a,\min I_2]} |\chi_{A} - \chi_{I_1}| \, d\calH^{1} + \int_{[\min I_2,b]} |\chi_{A} - \chi_{I_2^c}|\, d\calH^{1} \\
& \leq \int_{B \cap \ell} |\chi_{A} - \chi_{I_1}| \, d\calH^{1} + \int_{B \cap \ell} |\chi_{A^c} - \chi_{I_2}| \, d\calH^{1}\\
& \leq \NM_B(A,\ell) + \epsilon
\end{split}
\end{equation*}
by the choice of $I_1$ and $I_2$.
\end{proof}

%%%

\subsection{The approximation} \label{sect:small-width}
The main goal of this section is Corollary~\ref{cor:main}, which roughly states that a Semmes surface with small width in a given ball can be bilaterally approximated by a hyperplane in a slightly smaller ball.

Corollary~\ref{cor:main} is obtained by combining Propositions ~\ref{prop:small-non-mono-imply-approx-half-space} and ~\ref{monotonicityToBetas}. The former states that a set with upper Ahlfors-regular boundary and small non-mo{\-}no{\-}to{\-}nicity in some ball is measure-theoretically close to a half-space in a slightly smaller ball. This result is due to Naor and Young~\cite[Proposition~66]{NY}. It is inspired by a deep result of Cheeger, Kleiner, and Naor,~\cite[Theorem 4.3]{CKN}.
%, even though less quantitative but it will be sufficient for our purposes.
Proposition~\ref{prop:small-non-mono-imply-approx-half-space} is less quantitative than \cite[Theorem 4.3]{CKN}, but sufficient for our purposes.
The statement of Proposition~\ref{prop:small-non-mono-imply-approx-half-space} differs slightly from~\cite[Proposition~66]{NY}, however, taking into account Remark~\ref{rmk:non-monotonicity}, the proof given in~\cite{NY} can be verbatim rephrased to give a proof of Proposition~\ref{prop:small-non-mono-imply-approx-half-space} as stated here. The argument uses indeed only upper Ahlfors-regularity rather than Ahlfors-regularity of the boundary and is based on Theorem~\ref{thm:classification-monotone-sets} together with a compactness argument.

A \emph{hyperplane} in $\He^k$ is  an affine hyperplane when identifying $\Hek$ with $\Rk\times\R$ as a real vector space.
In the rest of the paper, a hyperplane
%, that is, an affine hyperplane when identifying $\Hek$ with $\Rk\times\R$ as a real vector space,
will be typically denoted by $P$. A hyperplane is either \textit{vertical}, which means that there is $\nu\in\Sk$ such that $P = p\cdot \W_\nu$ for every $p\in P$, or \textit{horizontal}, which means that there is a unique $p\in P$ such that $P=p\cdot H$ where $H:=\Rk\times\{0\}$. Given a hyperplane $P$, the two half-spaces with boundary $P$ will be denoted by $P^{-}$ and $P^{+}$.

In the next statement, we say that a set is \emph{$C$-upper Ahlfors-regular} if the set is upper Ahlfors-regular (with dimension $2k+1$) and $C$ is a positive constant for which~\eqref{e:upper-regular} holds with $s = 2k + 1$.

\begin{proposition} \cite[Proposition~66]{NY} \label{prop:small-non-mono-imply-approx-half-space}
For every $C>0$ and $\delta>0$, there exists $0< \gamma < 1$ such that the following holds. If $F \subset \Hek $ is measurable with $C$-upper Ahlfors-regular boundary, $p\in \Hek$, $r>0$, and $\NM_{B(p,r)}(F) \leq  \gamma^{2k+3}$, then there is a half-space $P^-\subset \Hek$ such that
\begin{equation}\label{form37} \frac{\calH^{2k+2}([F \bigtriangleup P^{-}] \cap B(p,\gamma r))}{\calH^{2k+2}(B(p,\gamma r))} \leq \delta. \end{equation}
\end{proposition}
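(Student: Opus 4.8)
The plan is to follow the strategy of Naor and Young~\cite[Proposition~66]{NY}, adapting their proof to the present formulation (which uses only upper Ahlfors-regularity of the boundary and the slightly different non-monotonicity functional $\NM_B$; recall from Remark~\ref{rmk:non-monotonicity} that $\widetilde{\text{NM}}_B \leq \NM_B$, so any smallness hypothesis on $\NM_B$ transfers to $\widetilde{\text{NM}}_B$). The argument is a compactness-and-contradiction scheme built on the classification of monotone sets, Theorem~\ref{thm:classification-monotone-sets}.

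First I would argue by contradiction and exploit the scaling and left-translation invariance of all the quantities involved. Suppose the statement fails for some fixed $C>0$ and $\delta>0$. Then for every $n\in\N$ there are a measurable set $F_n$ with $C$-upper Ahlfors-regular boundary, a point $p_n$, and a radius $r_n>0$ with $\NM_{B(p_n,r_n)}(F_n)\leq n^{-(2k+3)}$ (taking $\gamma=1/n$), yet $\calH^{2k+2}([F_n\bigtriangleup P^-]\cap B(p_n,r_n/n))/\calH^{2k+2}(B(p_n,r_n/n))>\delta$ for \emph{every} half-space $P^-$. By replacing $F_n$ with $\delta_{1/r_n}(p_n^{-1}\cdot F_n)$ we may assume $p_n=0$ and $r_n=1$; the hypotheses and the failure of the conclusion are preserved because $\NM_B$, the class of half-spaces, and the normalized measure ratio are all invariant under this normalization. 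So we have sets $F_n$ with $C$-upper Ahlfors-regular boundary, $\NM_{B(0,1)}(F_n)\to 0$, and $\inf_{P^-}\calH^{2k+2}([F_n\bigtriangleup P^-]\cap B(0,1/n))/\calH^{2k+2}(B(0,1/n))>\delta$ for all $n$. The next step is to extract a subsequence along which $\chi_{F_n}\to\chi_{F_\infty}$ in $L^1_{\mathrm{loc}}(\Hek)$ for some measurable $F_\infty\subset\Hek$; this uses a standard compactness argument (e.g.\ via $\mathrm{BV}_{\mathrm{loc}}$ bounds, or the measure-theoretic compactness used in~\cite{CKN,NY}), together with the fact that the upper Ahlfors-regularity of $\partial F_n$ with uniform constant $C$ prevents mass from disappearing into sets of positive measure with negligible boundary. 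One then shows $\NM_{B(0,1)}(F_\infty)=0$: by Fubini-type arguments the non-monotonicity functional is lower semicontinuous under $L^1_{\mathrm{loc}}$ convergence (the integrand $\NM_B(\cdot,\ell)$ over each fixed line is itself lower semicontinuous), so $\NM_{B(0,1)}(F_\infty)\leq\liminf_n\NM_{B(0,1)}(F_n)=0$. Hence $F_\infty\cap B(0,1)$ agrees $\calH^{2k+2}$-a.e.\ with a monotone set, and by Theorem~\ref{thm:classification-monotone-sets} (applied on $B(0,1)$, or after a further localization argument) $F_\infty$ coincides inside $B(0,1)$, up to null sets, with $\emptyset$, $\Hek$, or a half-space; in all cases there is a half-space $P^-$ (allowing the degenerate cases $P^-=\emptyset$ and $P^-=\Hek$) with $\calH^{2k+2}([F_\infty\bigtriangleup P^-]\cap B(0,1/2))=0$.

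To derive the contradiction I would use the $L^1_{\mathrm{loc}}$ convergence once more: since $\chi_{F_n}\to\chi_{F_\infty}$ in $L^1(B(0,1/2))$, for $n$ large we get $\calH^{2k+2}([F_n\bigtriangleup P^-]\cap B(0,1/2))$ as small as we wish, and a fortiori $\calH^{2k+2}([F_n\bigtriangleup P^-]\cap B(0,1/n))$ is small relative to $\calH^{2k+2}(B(0,1/n))$ — here one uses that the measure ratio over the small ball $B(0,1/n)$ is controlled, after translating the center to the hyperplane $P^-$ if necessary, by the ratio over the fixed ball $B(0,1/2)$ up to a dimensional constant (this is where the detailed bookkeeping of~\cite{NY} enters, handling the case where $0$ is far from $\partial P^-$ versus close to it). This contradicts the standing assumption that the ratio exceeds $\delta$ for every half-space, completing the proof.

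The main obstacle, and the step requiring the most care, is the passage from the global $L^1_{\mathrm{loc}}$ closeness of $F_\infty$ to a half-space on the fixed ball $B(0,1/2)$ down to the \emph{normalized} closeness on the shrinking ball $B(0,\gamma)$ with $\gamma\to 0$: one must either recenter within the half-space or observe that the normalized symmetric-difference ratio of a set from a fixed half-space, restricted to a ball, is essentially monotone under shrinking the ball about a point of the hyperplane. The other delicate point is justifying the compactness and lower semicontinuity of $\NM_B$ under $L^1_{\mathrm{loc}}$ convergence while only assuming upper (not two-sided) Ahlfors-regularity of the boundaries; as noted in the text preceding the statement, this is exactly where the argument deviates cosmetically from~\cite[Proposition~66]{NY}, but the proof there goes through verbatim once one records that only the upper regularity bound is used. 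For these reasons, rather than reproduce the full argument I would cite~\cite{NY} for the bulk of it and only indicate the (minor) modifications, which is the route the paper takes.
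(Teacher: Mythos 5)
Your overall strategy --- defer to \cite[Proposition~66]{NY}, whose compactness-plus-classification argument transfers once one notes that only upper Ahlfors-regularity is used and that $\widetilde{\text{NM}}_{B}\le\NM_{B}$ --- is exactly the route the paper takes; it gives no proof beyond this remark. However, the sketch you supply contains a genuine structural error in the normalization, and the step you yourself single out as ``the main obstacle'' is resolved incorrectly. You rescale so that the \emph{large} ball $B(p_n,r_n)$ becomes $B(0,1)$, leaving the failure of the conclusion on the shrinking balls $B(0,1/n)$. This creates two problems. First, the limit set $F_\infty$ is then only monotone on the fixed ball $B(0,1)$, whereas Theorem~\ref{thm:classification-monotone-sets} classifies sets monotone on all of $\Hek$; the classification of sets monotone merely on a ball is a genuinely harder statement and is not what that theorem provides --- your parenthetical ``or after a further localization argument'' hides precisely the difficulty that the exponent $2k+3$ and the factor $\gamma$ in the statement are designed to circumvent. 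Second, the concluding step fails: $L^{1}$ convergence of $\chi_{F_n}$ to $\chi_{P^-}$ on $B(0,1/2)$ gives no control on the \emph{normalized} ratio $\calH^{2k+2}([F_n\bigtriangleup P^-]\cap B(0,1/n))/\calH^{2k+2}(B(0,1/n))$, whose denominator is $\sim n^{-(2k+2)}$. Your claimed ``essential monotonicity'' of the density ratio under shrinking the ball is false: a set agreeing with $P^{-}$ outside $B(0,1/n)$ and with its complement inside has ratio $\sim n^{-(2k+2)}$ on $B(0,1/2)$ but ratio $1$ on $B(0,1/n)$.

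The correct normalization is the opposite one: rescale so that the \emph{small} ball $B(p_n,\gamma_n r_n)$ becomes $B(0,1)$, so that the hypothesis reads $\NM_{B(0,n)}(F_n)\le n^{-(2k+3)}$, i.e. the unnormalized integral satisfies
\begin{equation*}
\int_{\calL}\NM_{B(0,n)}(F_n,\ell)\,d\h(\ell)\le n^{2k+2}\cdot n^{-(2k+3)}=1/n .
\end{equation*}
Since $\NM_{B'}(\cdot,\ell)\le\NM_{B}(\cdot,\ell)$ for $B'\subset B$, this forces $\int_{\calL}\NM_{B(0,R)}(F_\infty,\ell)\,d\h(\ell)=0$ for every fixed $R$, so the limit is \emph{globally} monotone and Theorem~\ref{thm:classification-monotone-sets} applies directly; the contradiction is then reached on the fixed ball $B(0,1)$, where $L^{1}$ convergence suffices. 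This is exactly where the exponent $2k+3$ enters and why the statement loses a factor $\gamma$ between the ball on which non-monotonicity is assumed small and the ball on which flatness is concluded. With this correction your outline matches the argument of \cite{NY}.
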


Thanks to Proposition~\ref{form1}, we infer that if a closed upper Ahlfors-regular set has small horizontal width in some ball, then one can apply Proposition~\ref{prop:small-non-mono-imply-approx-half-space} to  every connected component of its complement and one gets the following corollary.

\begin{cor} \label{cor:small-width-imply-cc-close-half-spaces} Let $S\subset \Hek$ be a closed upper Ahlfors-regular set. For every $\delta>0$, there is $0< \gamma < 1$, depending only on $\delta,k$ and the upper Ahlfors-regularity constant for $S$, such that the following holds. If $p\in \Hek$, $r>0$, and $\wid_{B(p,r)}(S) \leq  \gamma^{2k+3}$, then, for every connected component $\Omega$ of $S^c$, there is a half-space $P_\Omega^-\subset \Hek$ such that
\begin{equation} \frac{\calH^{2k+2}([\Omega \bigtriangleup P_\Omega^{-}] \cap B(p,\gamma r))}{\calH^{2k+2}(B(p,\gamma r))} \leq \delta. \end{equation}
\end{cor}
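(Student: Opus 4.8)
The plan is to combine Proposition~\ref{form1} with Proposition~\ref{prop:small-non-mono-imply-approx-half-space} in a quantitative way. Since $S$ is $C$-upper Ahlfors-regular, the boundary of each connected component $\Omega$ of $S^c$ satisfies $\partial\Omega\subset S$, so $\partial\Omega$ is itself $C$-upper Ahlfors-regular with the \emph{same} constant $C$ (the inequality~\eqref{e:upper-regular} for $\partial\Omega$ follows immediately from the one for $S$). Thus Proposition~\ref{prop:small-non-mono-imply-approx-half-space} applies uniformly to every such $\Omega$, with a single $\gamma$ depending only on $C$, $\delta$ and $k$.

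First I would fix $\delta>0$ and let $0<\gamma<1$ be the constant furnished by Proposition~\ref{prop:small-non-mono-imply-approx-half-space} for this $C$ and this $\delta$. Then I would suppose that $p\in\Hek$, $r>0$, and $\wid_{B(p,r)}(S)\leq\gamma^{2k+3}$. By Proposition~\ref{form1}, for every connected component $\Omega$ of $S^c$ we have $\NM_{B(p,r)}(\Omega)\leq\wid_{B(p,r)}(S)\leq\gamma^{2k+3}$. Now apply Proposition~\ref{prop:small-non-mono-imply-approx-half-space} with $F:=\Omega$: this is legitimate since $\Omega$ is measurable (open) and $\partial\Omega$ is $C$-upper Ahlfors-regular. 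We obtain a half-space $P_\Omega^-\subset\Hek$ with
\begin{displaymath}
\frac{\calH^{2k+2}([\Omega\bigtriangleup P_\Omega^-]\cap B(p,\gamma r))}{\calH^{2k+2}(B(p,\gamma r))}\leq\delta,
\end{displaymath}
which is exactly the desired conclusion. The fact that $\gamma$ depends only on $\delta$, $k$, and the upper Ahlfors-regularity constant for $S$ is inherited directly from the corresponding dependence in Proposition~\ref{prop:small-non-mono-imply-approx-half-space}, because the upper regularity constant of each $\partial\Omega$ is bounded by that of $S$.

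The only genuine point to check — and the main (minor) obstacle — is the uniformity: one must be sure that the single constant $\gamma$ works simultaneously for \emph{all} connected components $\Omega$, of which there may be infinitely many. This is precisely what is secured by the observation that the upper regularity constant of $\partial\Omega$ does not degenerate as $\Omega$ varies, since $\partial\Omega\subset S$ gives the bound $\calH^{2k+1}(B(q,\rho)\cap\partial\Omega)\leq\calH^{2k+1}(B(q,\rho)\cap S)\leq C\rho^{2k+1}$ with the \emph{same} $C$. Beyond this, the proof is purely a matter of quoting Proposition~\ref{form1} and Proposition~\ref{prop:small-non-mono-imply-approx-half-space} in sequence; no further estimates are needed.
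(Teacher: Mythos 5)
Your proposal is correct and is exactly the argument the paper intends: the paper derives the corollary by applying Proposition~\ref{form1} to each connected component $\Omega$ of $S^c$ and then invoking Proposition~\ref{prop:small-non-mono-imply-approx-half-space}, with the uniformity coming precisely from $\partial\Omega\subset S$ so that every $\partial\Omega$ inherits the same upper Ahlfors-regularity constant. No gaps.
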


Condition~\textbf{B} first appears in Proposition~\ref{monotonicityToBetas} below. It states that a closed set satisfying Condition~\textbf{B} whose complementary components are measure-theoretically close to half-spaces in some ball is itself close to a hyperplane in a slightly smaller ball.

\begin{proposition}\label{monotonicityToBetas} There is a dimensional constant $\overline \epsilon >0$ such that the following holds for every $0 <\epsilon < \overline \epsilon$. Assume that $S \subset \Hek$ is a closed set satisfying Condition~\textup{\textbf{B}}. There exists $\delta > 0$, depending only on $\epsilon,k$ and the Condition~\textup{\textbf{B}} constant for $S$, such that if $p \in S$ and $0<r <\diam S$ are such that for every connected component $\Omega$ of $S^c$, there exists a half-space $P_{\Omega}^{-} \subset \Hek$ with
\begin{equation} \label{form3}
\frac{\calH^{2k+2}([\Omega \bigtriangleup P_{\Omega}^{-}] \cap B(p,r))}{\calH^{2k+2}(B(p,r))} \leq \delta,
\end{equation}
 then, there exists a hyperplane $P \subset \Hek$ such that $\dist(q,P) \leq \epsilon r$ for all $q \in S \cap B(p,r/80)$ and $\dist(q,S) \leq \epsilon r$ for all $q\in P\cap B(p,r/80)$.
\end{proposition}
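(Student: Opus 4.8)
\emph{Outline of the argument.} After applying a left translation and a dilation --- note that Condition~\textbf{B}, the hypothesis, and the conclusion are all preserved by the affine maps $x\mapsto \delta_{1/r}(p^{-1}\cdot x)$ of $\Rk\times\R$, which send half-spaces to half-spaces and $B(p,r)$ to $B(0,1)$ while multiplying $\calH^{2k+2}$ by a constant --- we may assume $p=0$, $r=1$, so that $\diam S>1$. Two elementary facts will be used throughout. First, identifying $\Hek$ with $\Rk\times\R$, every Kor\'anyi ball $B(q,\rho)=L_q(B(0,\rho))$ is Euclidean convex (a sublevel set of the convex function $(v,t)\mapsto|v|^4+16t^2$, pushed forward by the affine map $L_q$) and centrally symmetric about its centre $q$; hence any affine hyperplane through $q$ splits $B(q,\rho)$ into two pieces of equal $\calH^{2k+2}$ measure. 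Second, there is a dimensional constant $\mu_1>\tfrac13$ such that every half-space $P^{-}$ with $P^{-}\cap B(0,1/40)\neq\emptyset$ satisfies $\calH^{2k+2}(P^{-}\cap B(0,1))\ge \mu_1\,\calH^{2k+2}(B(0,1))$: indeed such a $P^{-}=\{\langle\cdot,n\rangle<b\}$ has $b>\min_{B(0,1/40)}\langle\cdot,n\rangle\ge -\rho$, where $\rho\le 1/20$ bounds the Euclidean radius of $B(0,1/40)$, so $P^{-}\supseteq\{\langle\cdot,n\rangle<-\rho\}$ and $\calH^{2k+2}(P^{-}\cap B(0,1))\ge \tfrac12\calH^{2k+2}(B(0,1))-\calH^{2k+2}(\{-\rho\le\langle\cdot,n\rangle<0\}\cap B(0,1))$, the last term being controlled by a thin slab and hence $<\tfrac16\calH^{2k+2}(B(0,1))$ once the ratio $40$ is large enough in terms of $k$ (this is where $80$ in the statement leaves room).

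\emph{Identifying the two relevant components.} Fix $\delta>0$ small (its precise smallness, in terms of $\epsilon$, $k$ and the Condition~\textbf{B} constant $c$, will be collected below). Call a component $\Omega$ of $S^{c}$ \emph{big} if $\calH^{2k+2}(\Omega\cap B(0,1))\ge(\mu_1-\delta)\,\calH^{2k+2}(B(0,1))$. Since distinct components are disjoint and $3(\mu_1-\delta)>1$ for $\delta$ small, there are at most two big components. If $\Omega$ is \emph{not} big, then by the hypothesis $\calH^{2k+2}(P_\Omega^-\cap B(0,1))<\mu_1\calH^{2k+2}(B(0,1))$, so by the second fact above $P_\Omega^-\cap B(0,1/40)=\emptyset$, whence $\Omega\cap B(0,1/40)\subseteq(\Omega\bigtriangleup P_\Omega^-)\cap B(0,1)$ has measure $\le\delta\calH^{2k+2}(B(0,1))$; in particular $\Omega\cap B(0,1/40)$ contains no ball of radius $>\delta^{1/(2k+2)}$. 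Now apply Condition~\textbf{B} at $(0,1/80)$ to get $q_1,q_2\in B(0,1/80)$ in \emph{different} components $\Omega_1,\Omega_2$ with $B(q_i,c/80)\subseteq\Omega_i$; since $c<1$ these balls lie in $B(0,1/40)$, so $\calH^{2k+2}(\Omega_i\cap B(0,1/40))\ge(c/80)^{2k+2}\calH^{2k+2}(B(0,1))>\delta\calH^{2k+2}(B(0,1))$ (for $\delta<(c/80)^{2k+2}$), forcing $\Omega_1,\Omega_2$ to be big. Hence $\Omega_1,\Omega_2$ are \emph{exactly} the big components, and every other component $\Omega$ is tiny in $B(0,1/40)$ in the above sense. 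Finally, disjointness of $\Omega_1,\Omega_2$ and the half-space approximations give $\calH^{2k+2}(P_1^-\cap P_2^-\cap B(0,1))\le2\delta\calH^{2k+2}(B(0,1))$, where $P_i^-:=P_{\Omega_i}^-$, and consequently $\calH^{2k+2}(\Omega_2\cap P_1^-\cap B(0,1))\le3\delta\calH^{2k+2}(B(0,1))$.

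\emph{Proof of the two inclusions.} Put $P:=\partial P_1^-$. For the first inclusion, let $q\in S\cap B(0,1/80)$ and suppose $\dist(q,P)>\epsilon$. Then the connected set $B(q,\epsilon)$ misses $P$, so it lies in $P_1^-$ or in $P_1^+$. Apply Condition~\textbf{B} at $(q,\epsilon/4)$ (valid since $\epsilon/4<1<\diam S$): we obtain $x_1,x_2$ in different components $\Omega^{(1)},\Omega^{(2)}$ with $B(x_i,c\epsilon/4)\subseteq B(q,\epsilon/2)\subseteq B(0,1/40)$ (using $\epsilon\le 1/40$), hence $B(x_i,c\epsilon/4)$ lies in $P_1^-$ or in $P_1^+$ accordingly. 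In the case $B(q,\epsilon)\subseteq P_1^+$: if $\Omega^{(i)}=\Omega_1$ then $B(x_i,c\epsilon/4)\subseteq\Omega_1\cap P_1^+\cap B(0,1)$, so $(c\epsilon/4)^{2k+2}\le\delta$, impossible for $\delta$ small; thus neither $x_i$ lies in $\Omega_1$, and since the two lie in different components one of them, say $\Omega^{(1)}$, is distinct from $\Omega_1$ and $\Omega_2$, hence tiny in $B(0,1/40)$ --- contradicting $B(x_1,c\epsilon/4)\subseteq\Omega^{(1)}\cap B(0,1/40)$ for $\delta$ small. The case $B(q,\epsilon)\subseteq P_1^-$ is symmetric, now ruling out $\Omega^{(i)}=\Omega_2$ via $\calH^{2k+2}(\Omega_2\cap P_1^-\cap B(0,1))\le3\delta\calH^{2k+2}(B(0,1))$. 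This proves $\dist(q,P)\le\epsilon$ on $S\cap B(0,1/80)$. For the second inclusion, let $q\in P\cap B(0,1/80)$ and suppose $\dist(q,S)>\epsilon$; then the connected set $B(q,\epsilon)\subseteq S^{c}$ lies in a single component $\Omega_j$, and $B(q,\epsilon)\subseteq B(0,1/40)$. Since $q\in P$ and $B(q,\epsilon)$ is centrally symmetric about $q$, the hyperplane $P$ splits $B(q,\epsilon)$ into halves of measure $\tfrac12\epsilon^{2k+2}\calH^{2k+2}(B(0,1))$ each. If $j=1$ the $P_1^{+}$-half lies in $\Omega_1\cap P_1^+$, giving $\tfrac12\epsilon^{2k+2}\le\delta$; if $j=2$ the $P_1^-$-half lies in $\Omega_2\cap P_1^-$, giving $\tfrac12\epsilon^{2k+2}\le3\delta$; if $j\notin\{1,2\}$ then $\Omega_j$ is tiny in $B(0,1/40)$ yet contains the ball $B(q,\epsilon)$, giving $\epsilon^{2k+2}\le\delta$. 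Each case is impossible once $\delta$ is small enough in terms of $\epsilon$ and $k$, which completes the proof with $\overline\epsilon:=1/80$.

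\emph{Where the difficulty lies.} The tempting step --- showing that the model half-spaces $P_1^-$ and $P_2^-$ are (essentially) complementary in $B(p,r)$ --- is false in general (two half-spaces can be measure-theoretically disjoint in a ball while their boundaries stay a definite distance apart), and three such half-spaces need not even be genuinely disjoint. The robust substitute, and the real point of the proof, is the dichotomy of the second paragraph: a half-space reaching the central sub-ball must occupy more than a third of the big ball, so at most two components are ``active'' near $B(p,r/80)$, and all information about $P$ is then extracted locally from the centrally-symmetric splitting of small balls together with Condition~\textbf{B}. The only genuinely quantitative ingredient is the cap estimate $\mu_1>\tfrac13$, whose verification for the Kor\'anyi ball (and the resulting admissible value of the ratio $80$) is the one place where the geometry of $\Hek$ --- rather than just its affine structure --- is used.
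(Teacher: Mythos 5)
Your argument takes a genuinely different route from the paper's: instead of comparing the two approximating planes $P_{1},P_{2}$ directly (the paper shows $P_{2}\cap B_{\mathrm{Euc}}$ lies in a Euclidean $\epsilon^{2}$-neighbourhood of $P_{1}$ and derives both inclusions from that), you run a volume-counting dichotomy --- at most two components of $S^{c}$ can be ``big'', all others are negligible in $B(0,1/40)$ --- and then extract both inclusions locally from Condition~\textbf{B} and the central symmetry of Kor\'anyi balls. The measure-theoretic bookkeeping (the bounds $\calH^{2k+2}(P_{1}^{-}\cap P_{2}^{-}\cap B(0,1))\leq 2\delta\,\calH^{2k+2}(B(0,1))$ and $\calH^{2k+2}(\Omega_{2}\cap P_{1}^{-}\cap B(0,1))\leq 3\delta\,\calH^{2k+2}(B(0,1))$, the identification of $\Omega_{1},\Omega_{2}$ via Condition~\textbf{B} at scale $1/80$, and both case analyses in the final step) is correct, and the equal-halves observation for centrally symmetric Kor\'anyi balls is a nice device.

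However, the one quantitative ingredient the whole dichotomy hangs on --- the ``cap estimate'' that every half-space meeting $B(0,1/40)$ occupies a fraction $\mu_{1}>\tfrac13$ of $B(0,1)$ --- is false for large $k$ with the universal ratio $40$. Such a half-space differs from $\{\langle\cdot,n\rangle<0\}$ by a slab of fixed Euclidean width $\approx 1/40$, and by concentration of measure in dimension $2k+1$ a central slab of \emph{fixed} width captures a fraction of the volume of $B(0,1)$ tending to $\tfrac12$ as $k\to\infty$: take $n=e_{1}$ horizontal; the marginal of $v_{1}$ on $B(0,1)$ concentrates at scale $k^{-1/2}$, so $\calH^{2k+2}(\{v_{1}<-1/40+\eta\}\cap B(0,1))/\calH^{2k+2}(B(0,1))\to 0$ while $\{v_{1}<-1/40+\eta\}$ still meets $B(0,1/40)$. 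Hence $\mu_{1}\to 0$, and the claim that the slab term is $<\tfrac16\calH^{2k+2}(B(0,1))$ fails for all sufficiently large $k$. You flag this yourself (``once the ratio $40$ is large enough in terms of $k$''), but that concession is incompatible with the statement being proved: the radius $r/80$ in the proposition is a fixed numerical constant, whereas your argument needs the inner ball to shrink like $r/\sqrt{k}$, so as written you only obtain the conclusion on $B(p,r/C_{k})$ for a $k$-dependent $C_{k}$; without the $\mu_1>\tfrac13$ threshold the ``at most two big components'' count, and with it the whole case analysis, collapses. (The $k$-dependent version would in fact suffice for Corollary~\ref{cor:main} and everything downstream, since the constant is absorbed into $\gamma$ there; but it is not the stated result, and ``this is where $80$ leaves room'' is not a justification.) Note that the paper's proof avoids any absolute volume threshold of this kind: every contradiction there is against $\delta$, which is allowed to depend on $k$ and $\epsilon$, so the ratios $1/40$ and $1/80$ can remain universal.
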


\begin{proof} Fix $\epsilon > 0$, which we may assume to be sufficiently small, depending only on $k$, for the following arguments to work. Left translations and dilations send connected components to connected components, hyperplanes to hyperplanes, and half-spaces to half-spaces, and also preserve the relative size of balls from Condition~\textbf{B}, so we may assume that $p= 0 \in S$ and $r = 1$. We will prove that there is a universal constant $c>0$ such that, for every $\epsilon>0$ fixed small enough, depending only on $k$, there exists a hyperplane $P \,(=P_1) \subset \Hek$ such that $\dist(q,P) \leq c \epsilon$ for all $q \in S \cap B(0,1/40)$ and $\dist(q,S) \leq 2c \epsilon$ for all $q\in P \cap B(0,1/80)$, provided $\delta$ in~\eqref{form3} is chosen small enough, see~\eqref{e:S-closed-to-P1} and~\eqref{e:P-closed-to-S}.

We start with the following observation. For every connected component $\Omega$ of $S^c$, points in $S \cap P_{\Omega}^{-} \cap B(0,1/2)$ must lie close to the boundary $P_{\Omega}$ of $P_{\Omega}^-$. Indeed, pick $q \in S \cap P_{\Omega}^{-} \cap B(0,1/2)$ and assume that $\dist(q,P_{\Omega}) > \epsilon$. Since $q \in S$, we may apply Condition~\textbf{B} to the ball $B(q,\epsilon) \subset P_{\Omega}^{-} \cap B(0,1)$ to find inside $B(q,\epsilon)$ two balls in different connected components of $S^c$ with radii $\sim_{reg} \epsilon$. In particular, one of the balls, say $B$, lies in $[P_{\Omega}^{-} \setminus \Omega] \cap B(0,1)$ which gives the lower bound
\begin{displaymath}
\calH^{2k+2}([\Omega \bigtriangleup P_{\Omega}^{-}] \cap B(0,1)) \geq \calH^{2k+2}([P_{\Omega}^{-} \setminus \Omega] \cap B(0,1)) \geq \calH^{2k+2}(B) \gtrsim_{reg} \epsilon^{2k+2}.
\end{displaymath}
So, taking $\delta$ much smaller than $\epsilon^{2k+2}$ in~\eqref{form3}, this gives a contradiction. Hence, for every connected component $\Omega$ of $S^c$,  we have
\begin{equation} \label{e:P^-}
\dist(q,P_{\Omega}) \leq \epsilon \quad \text{for all } q\in S \cap P_{\Omega}^- \cap B(0,1/2).
\end{equation}

Next, we would like to find one connected component $\Omega$ of $\Hek \setminus S$ for which we can reach a similar conclusion for points $q \in S \cap P_{\Omega}^{+} \cap B(0,1/40)$. To this end, we start by singling out two connected components $\Omega_{1}$, $\Omega_{2}$ of $S^c$ in the following way. Let $0< \rho <1$ be a suitable constant, to be chosen small later depending only on $\epsilon$. Then, apply Condition~\textbf{B} to the ball $B(0,\rho)$ to find two distinct connected components $\Omega_{1},\Omega_{2}$ of $S^c$, and two balls
\begin{displaymath}
B_{1} \subset \Omega_{1} \cap B(0,\rho ) \quad \text{and} \quad B_{2} \subset \Omega_{2} \cap B(0,\rho)
\end{displaymath}
with radii $\sim_{reg} \rho$. Then, let $P_{1}^-,P_{2}^-$ be the half-spaces, with boundary $P_1$, $P_2$ respectively, associated to $\Omega_{1}$ and $\Omega_{2}$ as in~\eqref{form3}, namely,
\begin{equation} \label{form3-bis}
\calH^{2k+2}([\Omega_{1} \bigtriangleup P_{1}^{-}] \cap B(0,1)) \lesssim_k \delta \quad \text{and} \quad \calH^{2k+2}([\Omega_{2} \bigtriangleup P_{2}^{-}] \cap B(0,1)) \lesssim_k \delta.
\end{equation}
If $\delta$ is small enough, we claim that both hyperplanes $P_{1}$ and $P_{2}$ intersect $B(0,\rho)$. To see this, assume for instance that this fails for $P_{1}$. Then both balls $B_{1},B_{2}$ lie either in $P_{1}^{+}$ or $P_{1}^{-}$. Both cases lead to a contradiction. First, if $B_{1}$ lies in $P_{1}^{+}$, then $B_{1} \subset \Omega_{1} \setminus P_{1}^{-} \subset \Omega_{1} \bigtriangleup P_{1}^{-}$, which violates the first part of~\eqref{form3-bis} for $\delta$ much smaller than $\rho^{2k+2}$, since $\calH^{2k+2}(B_{1}) \sim_{reg} \rho^{2k+2}$. Similarly, if $B_{2}$ lies in $P_{1}^{-}$, then $B_{2} \subset P_{1}^{-} \setminus \Omega_{1} \subset \Omega_{1} \bigtriangleup P_{1}^{-}$, which once again violates the first part of~\eqref{form3-bis}.

Now let $B_\text{Euc}$ denote the Euclidean ball centred at the origin with radius $1/20$ so that
\begin{displaymath}
B(0,1/40) \subset B_\text{Euc} \subset B(0,1/2).
\end{displaymath}
We claim that the hyperplanes $P_{1}$ and $P_{2}$ lie very close to each other inside $B_\text{Euc}$, provided $\rho$ was chosen small enough compared to $\epsilon$ . We quantify this by claiming that $P_{2} \cap B_\text{Euc}$ lies in the closed Euclidean $\epsilon^{2}$-neighbourhood $P_{1,\text{Euc}}(\epsilon^{2})$ of $P_{1}$, that is,
\begin{equation}\label{form7}
P_{2} \cap B_\text{Euc} \subset P_{1,\text{Euc}}(\epsilon^{2}).
\end{equation}
The argument to prove~\eqref{form7} is completely Euclidean. To see what is going on, it is helpful to first visualise what happens if the hyperplanes $P_{1},P_{2}$ both contain $0$. In this case, if $P_{2} \cap B_\text{Euc} \not\subset P_{1,\text{Euc}}(\epsilon^{2})$, then the intersection $P_{1}^{-} \cap P_{2}^{-} \cap B_\text{Euc}$ contains a Euclidean ball $B'_\text{Euc}$ with radius $\sim_k \epsilon^{2}$, see Figure~\ref{fig4}.
\begin{figure}[h!]
\begin{center}
\includegraphics[scale = 0.5]{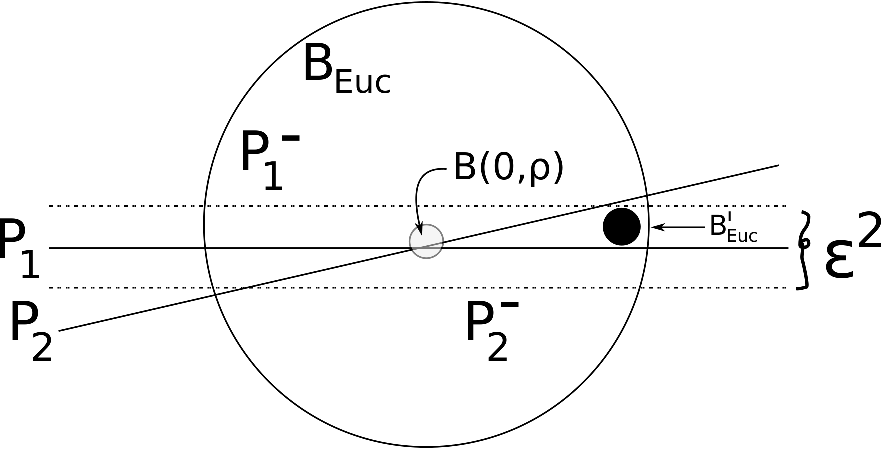}
\caption{The planes $P_{1},P_{2}$ and the $\epsilon^{2}$-Euclidean ball $B'_\text{Euc} \subset P_{1}^{-} \cap P_{2}^{-} \cap B_\text{Euc}$ drawn in black.}\label{fig4}
\end{center}
\end{figure}
Then, we note that the same remains true if $P_{1},P_{2}$ intersect the small ball $B(0,\rho)$, provided $\rho$ was chosen sufficiently small compared to $\epsilon$. We indeed start by translating both planes in the Euclidean sense by $\leq c\rho$ for some universal constant $c>0$ so that they contain $0$. This is possible since $d_\text{Euc}(p',q') \leq c  d(p',q')$ for $p', q' \in B(0,1/2)$ and for some universal constant $c>0$. Then we find a Euclidean ball with radius $\sim_k \epsilon^2$ as above, and finally shift back, making the ball a bit smaller if necessary. Since $\calH^{2k+2}$ coincides, up to a multiplicative constant, with the $(2k+1)$-dimensional Lebesgue measure on $\Rk\times\R$, we have $\calH^{2k+2} (B'_\text{Euc}) \sim_k \epsilon^{4k+2}$ and we get
\begin{equation*}
\calH^{2k+2}(P_{1}^{-} \cap P_{2}^{-} \cap B(0,1)) \gtrsim_k \epsilon^{4k+2}.
\end{equation*}
But if $\delta$ is chosen much smaller than $\epsilon^{4k+2}$, this contradicts~\eqref{form3-bis}. Indeed, writing $A := P_{1}^{-} \cap P_{2}^{-} \cap B(0,1)$, we have
\begin{equation} \label{form38}
\begin{split}
\calH^{2k+2}(A) & = \calH^{2k+2}(A \cap \Omega_{1}) + \calH^{2k+2}(A \setminus \Omega_{1})\\
& \leq \calH^{2k+2}([P_{2}^{-} \setminus \Omega_{2}] \cap B(0,1)) + \calH^{2k+2}([P_{1}^{-} \setminus \Omega_{1}] \cap B(0,1))\\
& \leq \calH^{2k+2}([\Omega_{2} \bigtriangleup P_{2}^{-}] \cap B(0,1)) + \calH^{2k+2}([\Omega_{1} \bigtriangleup P_{1}^{-}] \cap B(0,1)) \lesssim_k \delta.
\end{split}
\end{equation}
Hence~\eqref{form7} holds. As a corollary, we now show that
\begin{equation}\label{form8}
P_{1}^{+} \cap P_{2}^{+} \cap B_\text{Euc} \subset P_{1,\text{Euc}}(\epsilon^{2}).
\end{equation}
To see this, let $U_{1}$ and $U_{2}$ be the two connected components of $B_\text{Euc} \setminus P_{1,\text{Euc}}(\epsilon^{2})$, labelled so that $U_1 \subset P_1^-$ and $U_2 \subset P_1^+$, see Figure \ref{fig3}. By~\eqref{form7}, the hyperplane $P_2$ also separates $U_1$ and $U_2$, hence, either $U_1 \subset P_2^-$ and $U_2 \subset P_2^+$, or $U_1 \subset P_2^+$ and $U_2 \subset P_2^-$.
\begin{figure}[h!]
\begin{center}
\includegraphics[scale = 0.5]{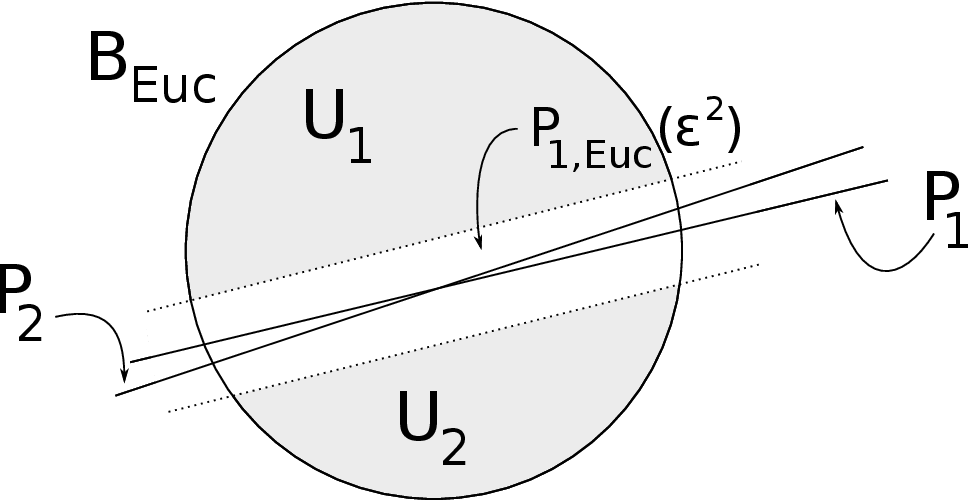}
\caption{The planes $P_{1},P_{2}$ and the components $U_{1},U_{2}$.}\label{fig3}
\end{center}
\end{figure}
Arguing by contradiction, if $P_{1}^{+} \cap P_{2}^{+} \cap B_\text{Euc} \not\subset P_{1,\text{Euc}}(\epsilon^{2})$, then $P_{1}^{+} \cap P_{2}^{+} \cap B_\text{Euc}$ meets $U_{2}$. In particular, $U_2 \cap P_2^+ \not= \emptyset$ and it follows that $U_2 \subset P_2^+$ and $U_1 \subset P_2^-$. Hence $U_{1} \subset P_{1}^{-} \cap P_{2}^{-} \cap B_\text{Euc}$. On the other hand, assuming that $\epsilon$ is sufficiently small, then $\calH^{2k+2}(U_{1}) \sim_k 1$ and we get that $\calH^{2k+2}(P_{1}^{-} \cap P_{2}^{-} \cap B(0,1)) \gtrsim_k 1$. Choosing $\delta$ small enough, this contradicts~\eqref{form38} and hence~\eqref{form8} holds.

Now we can conclude the proof of the proposition. Recall that we already know by~\eqref{e:P^-} applied to $\Omega_1$ that $\dist(q,P_{1}) \leq \epsilon$ for all $q \in S \cap P_{1}^{-} \cap B(0,1/2)$. Next, pick $q \in S \cap P_{1}^{+} \cap B(0,1/40)$. First, if $q \in P_{2}^{+}$ then $q \in P_{1}^{+} \cap P_{2}^{+} \cap B_\text{Euc} \subset P_{1,\text{Euc}}(\epsilon^{2})$ by~\eqref{form8}, hence $\dist(q,P_{1}) \leq c' \epsilon$ for some universal constant $c'>0$. Indeed recall that $P_{1,\text{Euc}}(\epsilon^{2})$ refers to the Euclidean $\epsilon^{2}$-neighbourhood of $P_1$ and that $d(p',q') \leq c' d_{\text{Euc}}(p',q')^{1/2}$ for $p',q' \in B(0,1/2)$ and for some universal constant $c'>0$. The same obviously still works if $q \in P_{2}$ by~\eqref{form7}. Finally, if $q \in S \cap P_{2}^{-} \cap B(0,1/40)$, then $\dist(q,P_{2}) \leq \epsilon$ by~\eqref{e:P^-} applied to $\Omega_2$. But $P_{2} \cap B_\text{Euc} \subset P_{1,\text{Euc}}(\epsilon^{2})$, hence $\dist(q,P_{1}) \leq (1+c')\epsilon$. So we have now proven that
\begin{equation} \label{e:S-closed-to-P1}
S \cap B(0,1/40) \subset P_1(c\epsilon)
\end{equation}
for some universal constant $c>0$ and where $P_1(c\epsilon)$ denotes the closed $c\epsilon$-neighbourhood of $P_1$ in the Kor\'anyi metric. In other words, $S$ is close to $P_1$ inside $B(0,1/40)$.

We show now that also $P_1$ is close to $S$ inside a slightly smaller ball, namely,
\begin{equation} \label{e:P-closed-to-S}
P_1 \cap B(0,1/80) \subset S(2c\epsilon)
\end{equation}
where $S(2c\epsilon)$ denotes the closed $2c\epsilon$-neighbourhood of $S$ in the Kor\'anyi metric. This will complete the proof of the proposition choosing $P:=P_1$.

We denote by $U_1'$ and $U_2'$ the two connected components of $B(0,1/40) \setminus P_1(c\epsilon)$, labelled so that $U_1' \subset P_1^-$ and $U_2'\subset P_1^+$. Since $U_1'$ is connected and does not meet $S$ by~\eqref{e:S-closed-to-P1}, it is contained in some connected component of $S^c$, that must be $\Omega_1$ by the first part of~\eqref{form3-bis}. Indeed, otherwise $U_1' \subset [P_1^- \setminus \Omega_1] \cap B(0,1)$ and since $\calH^{2k+2} (U_1') \sim_k 1$, at least if $\epsilon$ is sufficiently small, depending only on $k$, this contradicts the first part of~\eqref{form3-bis} choosing $\delta$ small enough. Hence $U_1' \subset \Omega_1$. By~\eqref{form7} and the choice of the universal constant $c$, we have $P_2 \cap B(0,1/40) \subset P_{1,\text{Euc}}(\epsilon^{2}) \subset P_1(c\epsilon)$, hence either $U_1' \subset P_2^+$ or $U_1' \subset P_2^-$. We have that $U_1' \subset P_2^+$, because otherwise $U_1' \subset [P_2^- \setminus \Omega_2] \cap B(0,1)$ and this contradicts now the second part of~\eqref{form3-bis}. Since $P_2$ separates $U_1'$ from $U_2'$, it follows that $U_2'\subset P_2^-$. Then, arguing in a similar way as we did for $U_1'$ and using the second part of~\eqref{form3-bis}, we get that $U_2' \subset \Omega_2$. In particular $U'_1$ and $U'_2$ are contained in different connected components of $S^c$. Going back to the proof of~\eqref{e:P-closed-to-S}, we argue by contradiction and assume that there is $q\in P_1 \cap B(0,1/80)$ with $\dist(q,S) > 2c\epsilon$. Then, choosing $\epsilon$ small enough, we get that $B(q,2c\epsilon) \subset B(0,1/40) \setminus S$. On the other hand, $B(q,2c\epsilon)$ meets both $U_1'$ and $U_2'$, and since $U_1'$ and $U_2'$ are contained in different connected components of $S^c$, the ball $B(q,2c\epsilon)$ should meet $S$, a contradiction.
\end{proof}

Combining Corollary~\ref{cor:small-width-imply-cc-close-half-spaces} and Proposition~\ref{monotonicityToBetas} shows that small horizontal width for a Semmes surface implies flatness.

\begin{cor} \label{cor:main}
There is a dimensional constant $\overline \epsilon >0$ such that the following holds for every $0 <\epsilon < \overline \epsilon$. Let $S\subset \Hek$ be a Semmes surface. There is $0<\gamma<1$, depending only on $\epsilon,k$ and on the upper Ahlfors-regularity and Condition~\textbf{B} constants for $S$, such that the following holds. If $p\in S$, $0<r<\diam S$, and $\wid_{B(p,r)}(S) \leq (80\gamma)^{2k+3}$, then there is a hyperplane $P\subset \Hek$ such that
\begin{equation*}
\sup_{q\in S \cap B(p, \gamma r)} \dist(q,P) + \sup_{q\in P \cap B(p, \gamma r)} \dist(q,S) \leq \epsilon \gamma r.
\end{equation*}
\end{cor}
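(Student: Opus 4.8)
The plan is to derive Corollary~\ref{cor:main} by chaining together Corollary~\ref{cor:small-width-imply-cc-close-half-spaces} and Proposition~\ref{monotonicityToBetas}, with a bookkeeping step to reconcile the ball radii and the relevant small-ness thresholds. Concretely: fix the dimensional constant $\overline\epsilon$ to be the one furnished by Proposition~\ref{monotonicityToBetas}, and fix $0<\epsilon<\overline\epsilon$. Proposition~\ref{monotonicityToBetas} then produces a $\delta>0$, depending only on $\epsilon$, $k$, and the Condition~\textbf{B} constant for $S$, such that whenever the complementary components of $S$ are $\delta$-close in measure to half-spaces inside a ball $B(p',r')$ with $p'\in S$ and $0<r'<\diam S$, there is a hyperplane $P$ with $\dist(q,P)\le\epsilon r'$ for $q\in S\cap B(p',r'/80)$ and $\dist(q,S)\le\epsilon r'$ for $q\in P\cap B(p',r'/80)$. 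I would then feed this $\delta$ into Corollary~\ref{cor:small-width-imply-cc-close-half-spaces}, which supplies $0<\gamma<1$, depending only on $\delta$ (hence on $\epsilon$), $k$, and the upper Ahlfors-regularity constant for $S$, such that $\wid_{B(p,r)}(S)\le\gamma^{2k+3}$ forces each complementary component $\Omega$ of $S^c$ to be $\delta$-close in measure to a half-space $P_\Omega^-$ inside $B(p,\gamma r)$.

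The remaining work is just to line up the scales. The hypothesis $\wid_{B(p,r)}(S)\le(80\gamma)^{2k+3}$ needs to be translated into the hypothesis of Corollary~\ref{cor:small-width-imply-cc-close-half-spaces} at an appropriate ball. Here I would exploit the scaling behaviour of horizontal width: since $\wid_{B}(S)$ is defined with the $r^{-(2k+2)}$ normalisation and the integrand $\wid_{B}(S,\ell)=\diam(B\cap S\cap\ell)$ is itself homogeneous of degree one, one checks that $\wid_{B(p,80\gamma r)}(S)$ is controlled by a constant multiple of $\wid_{B(p,r)}(S)$ times an appropriate power of $80\gamma$; more directly, since $B(p,80\gamma r)\subset B(p,r)$ when $80\gamma\le1$ (which I will ensure by shrinking $\gamma$ if necessary), we have $\wid_{B(p,80\gamma r)}(S,\ell)\le\wid_{B(p,r)}(S,\ell)$ pointwise in $\ell$, so that
\begin{displaymath}
\wid_{B(p,80\gamma r)}(S)=\frac{1}{(80\gamma r)^{2k+2}}\int_{\calL}\wid_{B(p,80\gamma r)}(S,\ell)\,d\h(\ell)\le\frac{1}{(80\gamma)^{2k+2}}\wid_{B(p,r)}(S)\le\frac{(80\gamma)^{2k+3}}{(80\gamma)^{2k+2}}=80\gamma.
\end{displaymath}
Hmm — this only gives a bound of order $\gamma$, not $\gamma^{2k+3}$, so a cruder comparison is not enough. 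Instead I would apply Corollary~\ref{cor:small-width-imply-cc-close-half-spaces} at the ball $B(p,r)$ itself with a threshold adapted so that the conclusion lands inside $B(p,\gamma' r)$ for the $\gamma'$ it returns, and then invoke Proposition~\ref{monotonicityToBetas} at the ball $B(p,\gamma' r)$, whose conclusion holds on $B(p,\gamma' r/80)$; choosing the final $\gamma$ of the corollary to be $\gamma'/80$ and tracking that $\epsilon(\gamma' r)=\epsilon\cdot 80\gamma r=80\epsilon\gamma r$ needs to be absorbed by relabelling $\epsilon$ (or by noting the statement only asks for $\le\epsilon\gamma r$, so one applies Proposition~\ref{monotonicityToBetas} with $\epsilon/80$ in place of $\epsilon$ from the start). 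So the cleanest route: given the target $\epsilon$, apply Proposition~\ref{monotonicityToBetas} with parameter $\epsilon/80$ to get $\delta$; apply Corollary~\ref{cor:small-width-imply-cc-close-half-spaces} with that $\delta$ to get $\gamma'$; set $\gamma:=\gamma'/80$; and verify $(80\gamma)^{2k+3}=(\gamma')^{2k+3}\le(\gamma')^{2k+3}$ matches the width threshold $\gamma'^{2k+3}$ of Corollary~\ref{cor:small-width-imply-cc-close-half-spaces} applied at $B(p,r)$.

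Assembling this: suppose $p\in S$, $0<r<\diam S$, and $\wid_{B(p,r)}(S)\le(80\gamma)^{2k+3}=(\gamma')^{2k+3}$. By Corollary~\ref{cor:small-width-imply-cc-close-half-spaces} (applied at $B(p,r)$ with threshold $(\gamma')^{2k+3}$), for every connected component $\Omega$ of $S^c$ there is a half-space $P_\Omega^-$ with
\begin{displaymath}
\frac{\calH^{2k+2}([\Omega\bigtriangleup P_\Omega^-]\cap B(p,\gamma' r))}{\calH^{2k+2}(B(p,\gamma' r))}\le\delta.
\end{displaymath}
Since $p\in S$ and $0<\gamma' r<r<\diam S$, Proposition~\ref{monotonicityToBetas} applies at the ball $B(p,\gamma' r)$: there is a hyperplane $P$ with $\dist(q,P)\le(\epsilon/80)\gamma' r$ for all $q\in S\cap B(p,\gamma' r/80)$ and $\dist(q,S)\le(\epsilon/80)\gamma' r$ for all $q\in P\cap B(p,\gamma' r/80)$. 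Writing $\gamma:=\gamma'/80$, we have $\gamma' r/80=\gamma r$ and $(\epsilon/80)\gamma' r=\epsilon\gamma r$, so
\begin{displaymath}
\sup_{q\in S\cap B(p,\gamma r)}\dist(q,P)+\sup_{q\in P\cap B(p,\gamma r)}\dist(q,S)\le 2\epsilon\gamma r,
\end{displaymath}
and after a harmless rescaling of $\epsilon$ by a factor of $2$ at the outset (which does not affect the dimensional bound $\overline\epsilon$ beyond a constant) this yields the stated inequality with right-hand side $\epsilon\gamma r$. The dependence of $\gamma$ is correct: $\gamma=\gamma'/80$ where $\gamma'$ depends on $\delta,k$ and the upper-regularity constant, and $\delta$ depends on $\epsilon,k$ and the Condition~\textbf{B} constant — so $\gamma$ depends only on $\epsilon$, $k$, and the upper Ahlfors-regularity and Condition~\textbf{B} constants for $S$, as claimed. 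The only genuinely delicate point — and the one I expect to double-check most carefully — is the bookkeeping of which ball each of the two input results is applied at and how the factor $80$ in Proposition~\ref{monotonicityToBetas} interacts with the radius contraction from Corollary~\ref{cor:small-width-imply-cc-close-half-spaces}; everything else is a direct citation. In particular one must make sure the width hypothesis is imposed at the \emph{outer} ball $B(p,r)$ (where $\wid$ is measured) while the flatness conclusion is extracted at the \emph{inner} ball $B(p,\gamma r)$, with no pointwise monotonicity comparison of widths across scales needed at all.
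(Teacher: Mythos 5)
Your proof is correct and is exactly the argument the paper intends (the paper gives no explicit proof of Corollary~\ref{cor:main}, merely stating that it follows by combining Corollary~\ref{cor:small-width-imply-cc-close-half-spaces} and Proposition~\ref{monotonicityToBetas}): apply the proposition with a rescaled $\epsilon$ to get $\delta$, feed $\delta$ into the corollary to get $\gamma'$, apply the corollary at $B(p,r)$ and the proposition at $B(p,\gamma' r)$, and set $\gamma=\gamma'/80$ so that the threshold $(80\gamma)^{2k+3}=(\gamma')^{2k+3}$ and the radius $\gamma' r/80=\gamma r$ both match. The abandoned scaling digression is harmless, and your final bookkeeping of radii, thresholds, and parameter dependencies is right.
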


%%%

\section{Lower Ahlfors-regularity and big vertical projections} \label{BVPSection}
We first prove in this section that closed subsets of $\Hek$ satisfying condition~\textbf{B}, and therefore Semmes surfaces, are lower Ahlfors-regular.

\begin{proposition}[Lower Ahlfors-regularity for sets satisfying condition~B] \label{prop:lowerAR} Let $S \subset \Hek$ be a  closed set satisfying condition~\textbf{B}. Then $S$ is lower Ahlfors-regular with a lower Ahlfors-regularity constant depending only on $k$ and the Condition~\textbf{B} constant of $S$.
\end{proposition}

\begin{proof}
The proof follows from the relative isoperimetric inequality in $\Hek$. Recall indeed that there is $\lambda\geq 1$ such that, given $E \subset \Hek$ measurable, $p\in \Hek$, and $r>0$, we have
\begin{equation} \label{e:relative-isop}
\min \{\calH^{2k+2} (B(p,\lambda^{-1} r)\cap E), \calH^{2k+2} (B(p,\lambda^{-1} r)\setminus E)\}^{\frac{2k+1}{2k+2}} \lesssim_{k} \mathcal{H}^{2k+1}(U(p,r) \cap \partial E)~,
\end{equation}
where $U(p,r)$ denotes the open ball with center $p$ and radius $r$. This follows from~\cite[Theorem~1.18]{MR1404326}, recalling that the Kor\'{a}nyi distance is biLipschitz equivalent to the sub-Riemannian distance together with the following fact. If $\mathcal{H}^{2k+1}(U(p,r) \cap \partial E) < +\infty$, then $\perh(E,U(p,r))\lesssim_{k} \mathcal{H}^{2k+1}(U(p,r) \cap \partial E)$ where $\perh$ denotes the  $\He$-perimeter. Now let $S \subset \Hek$ be a  closed set satisfying condition~\textbf{B}, $p\in S$ and $0< r < \diam S$. It follows from condition~\textbf{B} that there are two distinct connected components $\Omega_1$, $\Omega_2$ of $S^c$ such that
\begin{equation*}
\min\{\calH^{2k+2} (B(p, \lambda^{-1} r)\cap \Omega_1), \calH^{2k+2} (B(p,\lambda^{-1} r)\cap \Omega_2)\} \gtrsim_{k} r^{2k+2}.
\end{equation*}
Together with~\eqref{e:relative-isop} we get
\begin{equation*}
\begin{split}
\mathcal{H}^{2k+1}(B(p,r) \cap S) &\geq \mathcal{H}^{2k+1}(U(p,r) \cap \partial \Omega_1) \\
& \geq \min\{\calH^{2k+2} (B(p,\lambda^{-1} r)\cap \Omega_1), \calH^{2k+2} (B(p,\lambda^{-1} r)\setminus \Omega_1)\}^{\frac{2k+1}{2k+2}} \\
& \geq \min\{\calH^{2k+2} (B(p,\lambda^{-1} r)\cap \Omega_1), \calH^{2k+2} (B(p,\lambda^{-1} r)\cap \Omega_2)\}^{\frac{2k+1}{2k+2}} \\
& \gtrsim_{k} r^{2k+1}
\end{split}
\end{equation*}
which concludes the proof of the proposition.
\end{proof}

\begin{remark}  \label{r:lower_ADR} The proof of Proposition~\ref{prop:lowerAR} can be extended, with minor modifications, to get codimension one lower Ahlfors-regularity for closed sets satisfying condition~\textbf{B} in complete doubling metric measure spaces supporting a weak $(1,1)$-Poincar\'e inequality. Variants of the relative isoperimetric inequality hold indeed true in such spaces, see for instance~\cite{MR3165282}, which allow to mimic the proof of Proposition~\ref{prop:lowerAR} in this more general setting.
\end{remark}

We next prove that Semmes surfaces in $\Hek$ have big vertical projections. We first recall the definition of the big vertical projections property.

\begin{definition} [BVP] \label{def:BVP}
We say that a set $E\subset \Hek$ has \textit{big vertical projections} (or BVP in short) if there is  $c > 0$ such that, for all $p \in E$ and $0 < r < \diam E$, there is $\nu\in\Sk$ such that
\begin{equation} \label{e:def-bvp}
\mathcal{H}^{2k+1}(\pi_{\W_\nu}(B(p,r) \cap E)) \geq c\, r^{2k+1}.
\end{equation}
\end{definition}

This rest of this section is devoted to the proof of the following proposition.

\begin{proposition}[BVP for Semmes surfaces] \label{BVPProp}
Let $S \subset \Hek$ be a Semmes surface. Then $S$ has BVP. Moreover, the constant $c$ on the right-hand side of~\eqref{e:def-bvp} can be chosen depending only on $k$ and the upper Ahlfors-regularity and Condition~\textbf{B} constants of $S$.
\end{proposition}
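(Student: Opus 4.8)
The plan is to derive BVP from Corollary~\ref{cor:main} by a pigeonholing/compactness argument over scales, exploiting the fact that if a Semmes surface fails to have big vertical projections in a ball $B(p,r)$, then it cannot be flat along any vertical hyperplane at comparable scales, yet it also cannot have small horizontal width everywhere (since small width forces flatness, and flatness forces a large projection onto one of the two complementary vertical subgroups). So the first step is a direct computation: if $S \cap B(p,r)$ lies within $\epsilon\gamma r$ of a hyperplane $P$, then $\pi_{\W_\nu}(S \cap B(p,\gamma r))$ is large for a suitable $\nu$. If $P$ is vertical, $P = q\cdot\W_{\nu}$, this is almost immediate from \eqref{e:pwnu}: the vertical projection restricted to a vertical hyperplane is (Euclidean-)bi-Lipschitz-like on bounded sets, so $\calHk(\pi_{\W_\nu}(S\cap B(p,\gamma r)))\gtrsim_k (\gamma r)^{2k+1}$. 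If $P$ is horizontal (or generic), one instead projects onto $\W_\nu$ for $\nu$ chosen so that $\LL_\nu$ is \emph{not} tangent to $P$; since $B(p,\gamma r)$ is Euclidean-comparable to a Euclidean ball, the Euclidean $2k$-plane $P$ has image under the (linear, by \eqref{e:pwnu}) map $\pi_{\W_\nu}$ of positive $2k+1$-dimensional measure of the right order. I would isolate this as a lemma: \emph{flatness in $B(p,\gamma r)$ implies BVP in $B(p,\gamma r)$ with constant depending only on $k$ and the regularity constants}; one should take a little care that $S$ genuinely fills out a definite portion of $P\cap B$, which is exactly the second half of the conclusion of Corollary~\ref{cor:main} ($\dist(q,S)\le\epsilon\gamma r$ for $q\in P\cap B(p,\gamma r)$), guaranteeing that $\pi_{\W_\nu}(S\cap B)$ covers most of $\pi_{\W_\nu}(P\cap B)$.

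The second, and main, step is to show that at \emph{every} $p\in S$ and scale $0<r<\diam S$ there is a comparable sub-ball in which the horizontal width of $S$ is small enough to trigger Corollary~\ref{cor:main}. Here is where I expect the real work. The total horizontal width $\wid_{B(p,r)}(S)$ need not be small; but I would argue that it cannot be large at \emph{all} scales between $r$ and some fixed small $\eta r$. The mechanism: a horizontal line $\ell$ contributing a large width $\wid_{B}(S,\ell)$ means $S\cap\ell$ has two far-apart points inside $B$; between them there is at least one point of $S^c$ on $\ell$, or $S\cap\ell$ is a long interval — in either case, by upper Ahlfors-regularity one can run a Carleson-type packing estimate (the Heisenberg analogue of what the paper later calls Proposition~\ref{NMCarleson}) showing that $\sum$ over a Whitney-type family of sub-balls of those with large width is controlled by $r^{2k+1}\cdot(\text{upper regularity constant})$. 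Concretely: fix $p\in S$, $r$; suppose toward a contradiction that BVP fails badly at $(p,r)$; then by Corollary~\ref{cor:main} the set $S$ is \emph{not} flat in $B(q,\gamma\rho)$ for any $q\in S\cap B(p,r)$ and $\rho\le r$, so $\wid_{B(q,\rho)}(S) > (80\gamma)^{2k+3}$ for all such $q,\rho$; integrating \eqref{e:def-totalwidth} against $\h$ and summing over a dyadic net of balls contradicts the upper-regularity packing bound on how often a fixed horizontal line can contribute large width. Quantitatively this yields: $S$ fails BVP at $(p,r)$ $\Rightarrow$ $S$ is non-$\calHk$-upper-regular near $p$, contradiction. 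I would try to package this so the only inputs are (i) upper Ahlfors-regularity and (ii) Corollary~\ref{cor:main}, since those together with Condition~\textbf{B} are exactly the Semmes-surface hypotheses.

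The third step is bookkeeping: once one scale $\rho\sim_{reg} r$ with small width is found, Corollary~\ref{cor:main} gives a hyperplane $P$ and flatness of $S$ in $B(p',\gamma\rho)$ for some $p'\in S\cap B(p,r)$ with $\gamma\rho \sim_{reg} r$; Step~1 then gives $\calHk(\pi_{\W_\nu}(B(p',\gamma\rho)\cap S))\gtrsim_{reg}(\gamma\rho)^{2k+1}\sim_{reg} r^{2k+1}$, and since $\pi_{\W_\nu}$ is monotone under enlarging the set and $B(p',\gamma\rho)\subset B(p,Cr)$ for $C\sim_{reg}1$, one rescales $r$ by a harmless constant to get \eqref{e:def-bvp}; all constants depend only on $k$ and the upper Ahlfors-regularity and Condition~\textbf{B} constants, as claimed. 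I would then remark that lower Ahlfors-regularity follows from BVP by the (standard, and presumably proved just after) observation that $\pi_{\W_\nu}$ does not increase $\calHk$ too wildly on the relevant scale — more precisely that $\calHk(\pi_{\W_\nu}(A))\lesssim_k \calHk_\infty(A)$-type bounds combined with upper regularity force $\calHk(S\cap B(p,r))\gtrsim_{reg} r^{2k+1}$, i.e.\ Corollary~\ref{cor:lower-ar}. The main obstacle, to reiterate, is Step~2: converting ``width is not small at any scale'' into a contradiction with upper Ahlfors-regularity, which requires the Carleson packing estimate for large-width balls and some care with the non-Lipschitz nature of $\pi_{\W_\nu}$ when comparing Heisenberg and Euclidean metrics on $B(p,r)$.
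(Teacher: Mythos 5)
There are two genuine gaps here, and they are exactly the points where the paper's proof takes a different (and essential) turn.

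First, your Step 1 --- ``flatness in $B(p,\gamma r)$ implies BVP there'' --- is false as you state it. The conclusion of Corollary~\ref{cor:main} is only a bilateral Hausdorff-distance statement: $S\cap B$ lies in an $\epsilon\gamma r$-neighbourhood of $P$ and is $\epsilon\gamma r$-dense in $P\cap B$. A finite $\epsilon\gamma r$-net of $P\cap B$ satisfies both conditions, yet its vertical projection has $\calHk$ measure zero; so ``$\dist(q,S)\le\epsilon\gamma r$ for all $q\in P\cap B$'' does \emph{not} guarantee that $\pi_{\W_\nu}(S\cap B)$ covers most of $\pi_{\W_\nu}(P\cap B)$, precisely because $\pi_{\W_\nu}$ is not Lipschitz (nor even open on thin sets). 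The mechanism that actually works is topological, and it uses Condition~\textbf{B} a second time \emph{after} flatness is established: one shows that the two components $U_1,U_2$ of $B(p,\gamma r)\setminus P(\epsilon\gamma r)$ lie in \emph{different connected components of $S^c$}, so that every horizontal line meeting both $U_1$ and $U_2$ must cross $S$ inside the ball; hence $\calL(U_1)\cap\calL(U_2)\subset\calL(B(p,\gamma r)\cap S)$, and the lower bound comes from an $\h$-measure estimate for the lines joining the two sides of a plane (in the paper, Lemmas~\ref{lem:bvp-average-hyperplanes} and~\ref{lem:BVP-intermediate-step}, via the kinematic formula, $\He$-perimeter, and a compactness argument over all hyperplanes --- the uniform bound over horizontal and nearly-degenerate planes is not ``almost immediate'').

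Second, your Step 2 is both circular and unnecessary. The pigeonholing over scales via the Carleson packing estimate (Proposition~\ref{NMCarleson}) can only produce a contradiction if you know a priori that $\calHk(S\cap B(p,R))\gtrsim_{reg} R^{2k+1}$: the packing bound is $\lesssim_{reg}R^{2k+1}/\epsilon$, and large width at all points of $S\cap B(p,R)$ over $N$ dyadic scales contributes only $\gtrsim N\cdot\calHk(S\cap B(p,R))$, which is harmless unless the mass is bounded below. But that lower mass bound is lower Ahlfors-regularity, which in this paper is \emph{deduced from} BVP (Corollary~\ref{cor:lower-ar}). The paper sidesteps all of this with a single-scale observation: by~\eqref{e:h-l(A)}, the quantity to be bounded below equals $\h(\calL(B(p,r)\cap S))$, and by definition~\eqref{e:def-totalwidth} one has $\wid_{B(p,r)}(S)\le 2\,\h(\calL(B(p,r)\cap S))$. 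So assuming the averaged projection measure is small \emph{directly} forces the width to be small in the same ball, Corollary~\ref{cor:main} applies, and the separation argument above yields a lower bound on $\h(\calL(B(p,\gamma r)\cap S))$ that contradicts the assumption. No Carleson estimate, no search over scales, and no a priori mass bound are needed. If you want to salvage your multi-scale scheme you would first have to prove a crude lower bound on $\calHk(S\cap B(p,r))$ from Condition~\textbf{B} alone, which in $\Hek$ is not straightforward for exactly the reason the paper emphasises: vertical projections are not Lipschitz.
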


\begin{remark} Note that lower Ahlfors-regularity for a Semmes surface $S \subset \Hek$ can also be recovered as a consequence of Proposition~\ref{BVPProp}. Although projections $\pi_{\W_\nu}$ are not Lipschitz with respect to the Kor\'{a}nyi distance, it however follows from~\cite[Lemma~2.20]{MR3511465}, see also~\cite[Lemma~3.14]{FSS} and~\cite[Lemma~3.6]{CFO}, that, for every $\nu\in\Sk$ and $A\subseteq \Hek$, one has $\calHk (\pi_{\W_\nu}(A)) \lesssim_k \calHk(A)$. Then lower Ahlfors-regularity follows applying this for $A := B(p,r) \cap S$ together with~\eqref{e:def-bvp}.
\end{remark}

We begin the proof of Proposition~\ref{BVPProp}. We will actually prove the following slightly stronger result: we will find a constant $c > 0$, depending only on $k$ and on the upper Ahlfors-regularity and Condition \textbf{B} constants for the Semmes surface $S$, such that, for all $p \in S$ and $0 < r < \diam S$,
\begin{equation} \label{e:bvp-in-average}
\int_{\Sk} \calHk (\pi_{\W_\nu} (B(p,r) \cap S )) \, d\nu \geq c \, r^{2k+1}.
\end{equation}
The expression on the left-hand side of \eqref{e:bvp-in-average} can be thought of as a Heisenberg version of the \emph{Favard length} of $B(p,r) \cap S$. The proof of~\eqref{e:bvp-in-average} for an arbitrary Semmes surface will follow from its validity for hyperplanes, Lemma~\ref{lem:bvp-average-hyperplanes}, together with a compactness argument, Lemma~\ref{lem:BVP-intermediate-step}, and Corollary~\ref{cor:main}.

\begin{lemma} \label{lem:bvp-average-hyperplanes}
There is a dimensional constant $\overline c>0$ such that if $P\subset \Hek$ is a hyperplane, $p\in P$, and $r>0$, then
\begin{equation*}
\int_{\Sk} \calHk (\pi_{\W_\nu} (B(p,r) \cap P )) \, d\nu \geq \overline c \, r^{2k+1}.
\end{equation*}
\end{lemma}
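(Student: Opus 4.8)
The plan is to reduce the statement to a lower bound on a single, well-chosen vertical projection, and to produce that projection by an explicit geometric argument depending on whether the hyperplane $P$ is vertical or horizontal. By left-invariance and scaling of both $\calHk$ and the vertical projections $\pi_{\W_\nu}$, and by~\eqref{e:h-l(A)}--type homogeneity, we may assume $p = 0$ and $r = 1$, so that $P$ is a linear hyperplane through the origin in $\Rk\times\R$. Since $\int_{\Sk}\calHk(\pi_{\W_\nu}(B(0,1)\cap P))\,d\nu$ is at least the contribution of any positive-measure set of directions $\nu$ on which $\calHk(\pi_{\W_\nu}(B(0,1)\cap P))$ is bounded below, it suffices to exhibit an open set $U\subset\Sk$ of directions, with surface measure $\gtrsim_k 1$, such that $\calHk(\pi_{\W_\nu}(B(0,1)\cap P))\gtrsim_k 1$ for all $\nu\in U$.

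First I would treat the case where $P$ is \emph{horizontal}, i.e.\ $P = H = \Rk\times\{0\}$. Then for any $\nu\in\Sk$, formula~\eqref{e:pwnu} gives $\pi_{\W_\nu}(v,0) = (v-\langle v,\nu\rangle\nu,\,-\omega(v,\langle v,\nu\rangle\nu)/2)$, which is a smooth map of $H\cong\Rk$ into $\W_\nu\cong\R^{2k}$; one computes directly that it is a polynomial diffeomorphism onto $\W_\nu$ whose Jacobian is a nonzero constant (indeed it is essentially the graph map $w\mapsto(w,\tfrac12\omega(\cdot))$ after identifying $\nu^\perp\cong\R^{2k-1}$; the Franchi--Serapioni Jacobian computation~\cite[Lemma~2.20]{MR3511465} already underlies this). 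Hence $\pi_{\W_\nu}(B(0,1)\cap H)$ contains the image of a fixed Euclidean ball around $0$ in $H$, and so has $\calHk$-measure $\gtrsim_k 1$ uniformly in $\nu$; here $U = \Sk$.

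Next, the case where $P$ is \emph{vertical}: $P = \W_\mu$ for some $\mu\in\Sk$. Here the obstruction — and the main point — is that for $\nu$ parallel to $\mu$ the projection $\pi_{\W_\mu}$ restricted to $\W_\mu$ is the identity (fine, measure $\gtrsim_k 1$), but one must rule out degeneracy for a robust family of $\nu$. The natural choice is to take $U = \{\nu\in\Sk : |\langle\nu,\mu\rangle| \geq \tfrac12\}$, which has $\sigma(U)\gtrsim_k 1$. For such $\nu$, I would show $\pi_{\W_\nu}|_{\W_\mu\cap B(0,1)}$ has image of measure $\gtrsim_k 1$ by writing both subgroups in coordinates via~\eqref{e:pwnu}--\eqref{e:plnu}: a point of $\W_\mu$ is $(w,t)$ with $w\in\mu^\perp$, and $\pi_{\W_\nu}(w,t) = (w - \langle w,\nu\rangle\nu,\; t - \omega(w,\langle w,\nu\rangle\nu)/2)$. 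Restricting to the "horizontal slice" $w$ ranging over a small Euclidean ball in $\mu^\perp$ and $t=0$, the map $w\mapsto(w-\langle w,\nu\rangle\nu, \ast)$ is again polynomial with Jacobian a nonzero constant depending continuously on $\nu$ and bounded away from $0$ for $\nu\in U$ (this is where $|\langle\nu,\mu\rangle|\geq\tfrac12$ prevents the linear part $w\mapsto w-\langle w,\nu\rangle\nu$ from collapsing $\mu^\perp$), and the chosen slice sits inside $\W_\mu\cap B(0,1)$ after shrinking radii by a dimensional constant. Thus $\calHk(\pi_{\W_\nu}(B(0,1)\cap\W_\mu))\gtrsim_k 1$ for all $\nu\in U$.

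Combining the two cases, in either situation there is a set of directions of surface measure $\gtrsim_k 1$ on which the projected measure is $\gtrsim_k 1$, giving $\int_{\Sk}\calHk(\pi_{\W_\nu}(B(0,1)\cap P))\,d\nu \geq \overline c$ for a dimensional $\overline c>0$; undoing the normalization yields the stated inequality with $r^{2k+1}$. The main obstacle is the bookkeeping in the vertical case — verifying uniformly over $\nu\in U$ that the relevant Jacobian is bounded below and that a fixed-size coordinate slice of $\W_\mu$ lands inside the Korányi ball $B(0,1)$ — but both are routine once one fixes the constant $\tfrac12$ (any fixed threshold works) and uses the comparability of the Korányi and Euclidean metrics near the origin.
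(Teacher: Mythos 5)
Your overall strategy --- exhibiting an open set of directions $\nu$ of surface measure $\gtrsim_k 1$ on which $\calHk(\pi_{\W_\nu}(B(0,1)\cap P))\gtrsim_k 1$, case-split according to whether $P$ is horizontal or vertical --- is genuinely different from the paper's proof, which rewrites the left-hand side as $\h(\calL(B(0,1)\cap P))$ via \eqref{e:h-l(A)}, applies Montefalcone's kinematic formula to identify this with $c_k\,\perh(P^-,U(0,1))$, and bounds the perimeter from below uniformly over hyperplanes through $0$ using the explicit formula \eqref{e:C(p)n} and compactness of $\mathbb{S}^{2k}$. A direct projection argument of your type can be made to work (and would avoid the perimeter machinery), but as written it contains two genuine errors.

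In the horizontal case $P=H=\Rk\times\{0\}$, the restriction $\pi_{\W_\nu}|_{H}$ is \emph{not} a diffeomorphism onto $\W_\nu$ and its Jacobian is \emph{not} a nonzero constant. Writing $v=w+s\nu$ with $w\in\nu^\perp$, formula \eqref{e:pwnu} gives $\pi_{\W_\nu}(v,0)=(w,\,-s\,\omega(w,\nu)/2)$, a triangular map whose Jacobian is $|\omega(w,\nu)|/2$; this vanishes on the hyperplane $\{w\in\nu^\perp:\omega(w,\nu)=0\}$, along which entire lines $\{w+s\nu:s\in\R\}$ are crushed to points, and the map is neither injective nor surjective. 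The conclusion $\calHk(\pi_{\W_\nu}(B(0,1)\cap H))\gtrsim_k 1$ is still true, uniformly in $\nu$, but it must be obtained by a Fubini computation: since the images of distinct fibers $\{w\}\times\R$ are disjoint, the measure of the image of $\{|v|\le\rho\}$ equals $\int_{\{w\in\nu^\perp:\,|w|\le\rho\}}\sqrt{\rho^2-|w|^2}\,|\omega(w,\nu)|\,dw\gtrsim_k\rho^{2k+1}$. The appeal to \cite[Lemma 2.20]{MR3511465} does not help here: that lemma concerns the maps $\Phi_{p,\nu}\colon\W_\nu\to\W_\nu$, not $\pi_{\W_\nu}|_{H}$.

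The vertical case is broken as written. You restrict to the slice $\{(w,0):w\in\mu^\perp\}$ of $P=\W_\mu$, which is a $(2k-1)$-dimensional set; its image under any map into $\W_\nu\cong\nu^\perp\times\R\cong\R^{2k}$ is $\mathcal{L}^{2k}$-null, hence $\calHk$-null in $\W_\nu$ (as $\calHk|_{\W_\nu}$ is comparable to $\mathcal{L}^{2k}$), so no lower bound can come from this slice. You must keep the $t$-variable: on all of $\W_\mu$ one has $\pi_{\W_\nu}(w,t)=(w-\langle w,\nu\rangle\nu,\,t-\omega(w,\langle w,\nu\rangle\nu)/2)$, which is triangular in $(w,t)$ with Jacobian equal to that of the linear map $w\mapsto w-\langle w,\nu\rangle\nu$ from $\mu^\perp$ to $\nu^\perp$, namely $|\langle\mu,\nu\rangle|\ge 1/2$ on your set $U$; a fixed box $\{|w|\le\rho,\ |t|\le\rho^2\}\subset\W_\mu\cap B(0,1)$ then has image of measure $\gtrsim_k 1$. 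With these two repairs (and a word justifying the reduction to $p=0$, e.g.\ via \eqref{e:h-l(A)} and Lemma \ref{leftInvariance-H2k+1}, since $\pi_{\W_\nu}$ does not commute with left translations) your argument becomes a valid alternative to the paper's.
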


\begin{proof}

It follows from~\eqref{e:h-l(A)} that
$$\int_{\Sk} \calHk (\pi_{\W_\nu} (B(p,r) \cap P )) \, d\nu = \h(\calL(B(p,r) \cap P)).$$
Using  dilations, left translations, and  Lemma \ref{leftInvariance-H2k+1}, we may assume with no loss of generality that $r=1$ and $p=0\in P$.

Next, we use  the notion of $\He$-\emph{perimeter measure} $\perh (E,\cdot)$ of a measurable set $E\subseteq \He^{2k+1}$ to evaluate this expression further. See for instance~\cite{FSSC} for the definition and properties of $\He$-perimeter. We take $E:=P^{-}$ to be one of the two half-spaces bounded by $P$. Since $P$ is of class $C^1$ in the Euclidean sense, $P^{-}$
is of locally finite $\He$-perimeter.
We apply to $P^{-}$ and the open ball $U(0,1)$ the kinematic formula originally due to Montefalcone~\cite{MR2165404}, see also~\cite[(6.1)]{CKN} and~\cite[(112)]{NY}. Specialized to our setting, this ensures the existence of a dimensional constant $0<c_k<+\infty$ such that
\begin{equation*}
\int_{\calL} \per(P^{-}\cap \ell, U(0,1)\cap \ell)  \, d\h(\ell)= c_k \, \perh (P^{-},U(0,1)).
\end{equation*}

On the one hand, we note that
\begin{equation}\label{eq:Perim_h}
\int_{\calL} \per(P^{-}\cap \ell, U(0,1)\cap \ell)\, d\h(\ell) = \h(\calL(U(0,1) \cap P)).
\end{equation}
Indeed, since $P^{-}$ is a half-space bounded by $P$, then $ \per(P^{-}\cap \ell, U(0,1)\cap \ell)= 1$ for horizontal lines $\ell \in \calL$ meeting $P$ transversally inside $U(0,1)$, and $\per(P^{-}\cap \ell, U(0,1)\cap \ell) = 0$ otherwise, that is, if either $\ell \subset P$ or $\ell \not\in \calL(U(0,1) \cap P)$. Since $\h(\{\ell \in \calL: \ell \subset P\}) = 0$, \eqref{eq:Perim_h} follows, and thus
\begin{displaymath}
\h(\calL(U(0,1) \cap P))=c_k \, \perh (P^{-},U(0,1)).
\end{displaymath}

On the other hand, we show that
$\perh(P^{-},U(0,1))$ is uniformly bounded away from zero for all hyperplanes $P$ containing the origin.  Indeed, there exists $\rho >0$ such that $U(0,1)$ contains a Euclidean ball with radius $\rho$, hence $P\cap U(0,1)$ contains a Euclidean $2k$-dimensional ball $B_{\mathrm{Euc}}^{2k}(0,\rho)$ and it follows from~\cite[Corollary~7.7(i)]{FSSC} that
\begin{equation*}
\perh(P^-,U(0,1)) \gtrsim_k  \int_{B_{\mathrm{Euc}}^{2k}(0,\rho)} |C(p) n| \, d\mathcal{H}^{2k}_\mathrm{Euc}(p)
\end{equation*}
where $\mathcal{H}^{2k}_\mathrm{Euc}$ denotes the $2k$-dimensional Hausdorff measure with respect to the Euclidean distance, $n=(n_1,\ldots,n_{2k+1})$ is the Euclidean outward unit normal vector to $P^-$, and
\begin{equation} \label{e:C(p)n}
|C(p)n|=\sqrt{\sum_{i=1}^k \left(n_i - \frac{1}{2} v_{i+k} n_{2k+1}\right)^2 +  \left(n_{k+i}+\frac{1}{2} v_i n_{2k+1}\right)^2}
\end{equation}
for $p=(v,t) \in \Hek$ with $v=(v_1, \dots, v_{2k})\in \Rk$. Denoting by $\mathbb{S}^{2k}$ the Euclidean unit sphere in $\R^{2k+1}$, the map
\begin{displaymath}
n \in \mathbb{S}^{2k} \mapsto \int_{B_{\mathrm{Euc}}^{2k}(0,\rho)} |C(p) n| \, d\mathcal{H}^{2k}_\mathrm{Euc}(p)
\end{displaymath}
is continuous and non-vanishing. Hence, by compactness of $\mathbb{S}^{2k}$, we have
$$\inf_{n\in \mathbb{S}^{2k}} \int_{B_{\mathrm{Euc}}^{2k}(0,\rho)} |C(p) n| \, d\mathcal{H}^{2k}_\mathrm{Euc}(p) >0,$$
and it follows that
\begin{equation} \label{e:lower-ar-hyperplanes}
\perh(P^-,U(0,1)) \gtrsim_k 1.
\end{equation}
This yields
\begin{equation*}
\begin{split}
\int_{\Sk} \calHk (\pi_{\W_\nu} (B(0,1) \cap P )) \, d\nu &\geq \h(\calL(U(0,1) \cap P))\\
&\gtrsim_k \perh(P^-,U(0,1)) \gtrsim_k 1
\end{split}
\end{equation*}
and concludes the proof.
\end{proof}

Estimates similar to \eqref{e:lower-ar-hyperplanes} are also valid in more general Carnot groups, see \cite[Theorem 1.2]{MR2250055}. The next lemma comes as a consequence of Lemma~\ref{lem:bvp-average-hyperplanes} via a compactness argument.

\begin{lemma} \label{lem:BVP-intermediate-step} There is a dimensional constant $\overline\epsilon > 0$ such that the following holds for every $0<\epsilon<\overline \epsilon$. Let $P \subset \Hek$ be a hyperplane, $p\in P$ and $r>0$. Let $U_{1},U_{2}$ denote the two connected components of $B(p,r) \setminus P(\epsilon r)$ where $P(\epsilon r)$ denotes the closed $\epsilon r$-neighbourhood of $P$. Then,
\begin{displaymath}
\h(\calL(U_{1} )\cap \calL( U_{2})) \geq \overline\epsilon \, r^{2k+1}. \end{displaymath}
\end{lemma}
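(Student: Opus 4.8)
The plan is to reduce to a normalized configuration and then run a compactness argument whose only substantive input is Lemma~\ref{lem:bvp-average-hyperplanes}.

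\emph{Reduction.} Left translations and dilations carry horizontal lines to horizontal lines, affine hyperplanes to affine hyperplanes, open half-spaces to open half-spaces, and a Kor\'anyi $\epsilon r$-neighbourhood to a Kor\'anyi $\epsilon$-neighbourhood after scaling by $1/r$. Combining this with the left-invariance of $\h$ (Lemma~\ref{leftInvariance-H2k+1}) and the homogeneity $\h(\delta_r(\mathcal{A})) = r^{2k+1}\h(\mathcal{A})$, I would first reduce to $r = 1$ and $p = 0 \in P$; it then suffices to produce a dimensional $\overline\epsilon > 0$ such that $\h(\calL(U_1)\cap\calL(U_2)) \geq \overline\epsilon$ for every hyperplane $P \ni 0$ and every $\epsilon \in (0,\overline\epsilon)$. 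I would also record the elementary geometric fact that, for $\epsilon$ below a dimensional threshold, $B(0,1) \setminus P(\epsilon)$ has exactly two connected components, one contained in each open half-space $P^{-}, P^{+}$; we tacitly restrict to such $\epsilon$. The consequence to be used later is that any $q \in B(0,1)$ with $q \in P^{\pm}$ and $\dist(q,P) > \epsilon$ automatically lies in the component contained in $P^{\pm}$.

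\emph{Compactness set-up.} I would argue by contradiction: if no such $\overline\epsilon$ exists, there are $\epsilon_n \downarrow 0$ and hyperplanes $P_n \ni 0$, with corresponding components $U_1^n \subset P_n^{-}$ and $U_2^n \subset P_n^{+}$, such that $\h(\calL(U_1^n)\cap\calL(U_2^n)) \to 0$. The set of hyperplanes through the origin, identified with unit normals up to sign, is compact, so along a subsequence $P_n \to P_\infty$, a hyperplane through the origin. By Lemma~\ref{lem:bvp-average-hyperplanes} (with radius $1/4$) and \eqref{e:h-l(A)}, $\h(\calL(B(0,1/4)\cap P_\infty)) = \int_{\Sk}\calHk(\pi_{\W_\nu}(B(0,1/4)\cap P_\infty))\,d\nu \gtrsim_k 1$. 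Hence it is enough to prove: for $\h$-almost every $\ell \in \calL(B(0,1/4)\cap P_\infty)$ one has $\ell \in \calL(U_1^n)\cap\calL(U_2^n)$ for all sufficiently large $n$. Indeed, Fatou's lemma then yields $\liminf_n \h(\calL(U_1^n)\cap\calL(U_2^n)) \gtrsim_k 1$, contradicting the choice of $P_n,\epsilon_n$. The $\h$-measurability of the sets $\calL(U_i^n)$ is routine, since up to an $\h$-null set they agree with $\calL(\interior U_i^n)$ of an open set.

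\emph{The core step.} Fix a generic $\ell \in \calL(B(0,1/4)\cap P_\infty)$, where ``generic'' means $\ell \not\subseteq P_\infty$; the exceptional lines are $\h$-null, as is seen from \eqref{e:def-meas-h-H2k+1} (for each $\nu$, the $q \in \W_\nu$ with $q\cdot\LL_\nu \subseteq P_\infty$ form a $\calHk$-null subset of $\W_\nu$ unless $\W_\nu = P_\infty$, which occurs for at most one $\nu$). Such an $\ell$ is an affine line meeting the affine hyperplane $P_\infty$ transversally in a single point $x$, and $x \in B(0,1/4)$. Parametrising $\ell$ by Kor\'anyi arclength with $\ell(0) = x$, I would take $y^{+}, y^{-}$ on $\ell$ at arclength $1/8$ from $x$, one strictly on each side of $P_\infty$; then $y^{\pm} \in B(0,1/2) \subset U(0,1)$ and $d_{\text{Euc}}(y^{\pm},P_\infty) = \tau$ for some fixed $\tau = \tau(\ell) > 0$. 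Since $P_n \to P_\infty$ (convergence of unit normals), for $n$ large $y^{\pm}$ lies strictly on the corresponding side of $P_n$ and at Euclidean distance $\geq \tau/2$ from $P_n$; using the local comparison $d_{\text{Euc}} \lesssim \dist$ and taking an infimum over $P_n$, this gives $\dist(y^{\pm},P_n) \gtrsim \tau > \epsilon_n$ for $n$ large. By the consequence recorded in the reduction step, $y^{+}$ and $y^{-}$ then lie in the two distinct components $U_1^n, U_2^n$ of $B(0,1)\setminus P_n(\epsilon_n)$; as $y^{\pm} \in \ell$, this proves $\ell \in \calL(U_1^n)\cap\calL(U_2^n)$ for all large $n$, as needed.

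\emph{Main obstacle.} The delicate part is the core step: turning the purely Euclidean fact that a generic horizontal line crosses $P_\infty$ transversally into the Kor\'anyi statement that, after perturbing the hyperplane and shrinking the tube $P_n(\epsilon_n)$, a fixed crossing point stays strictly outside the tube and on the expected side. This uses only the local equivalence of the Kor\'anyi and Euclidean metrics, but the bookkeeping with sides of $P_n$ and distances to $P_n$ must be done carefully. Everything else --- the normalisation, the ``exactly two components'' fact, the measurability remarks, and the appeal to Lemma~\ref{lem:bvp-average-hyperplanes} --- is routine.
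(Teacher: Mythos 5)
Your proof is correct and uses essentially the same ingredients as the paper's: the sole quantitative input is Lemma~\ref{lem:bvp-average-hyperplanes} via \eqref{e:h-l(A)}, combined with the observation that an $\h$-generic line meeting $P_\infty$ transversally eventually passes through both components once the tube is thin enough, and compactness of the set of hyperplanes through the origin. The paper organizes this as a per-hyperplane monotone-convergence step followed by a neighbourhood-stability argument and a finite subcover, whereas you run a sequential compactness argument with Fatou's lemma handling the limits $P_n \to P_\infty$ and $\epsilon_n \to 0$ simultaneously; both packagings are sound.
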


\begin{proof} Using left translations and dilations, we may assume with no loss of generality that $p=0\in P$ and $r=1$. First, for a fixed hyperplane $P\ni 0$, Lemma~\ref{lem:bvp-average-hyperplanes} together with~\eqref{e:h-l(A)} implies the existence of some $\epsilon_{P} > 0$ such that
\begin{equation}\label{form34}
\h(\calL(U_{1}(\epsilon_{P}) )\cap \calL( U_{2}(\epsilon_{P}))) \gtrsim_k 1
\end{equation}
where $U_{1}(\epsilon_{P})$ and $U_{2}(\epsilon_{P})$ are the two connected components of $B(0,1) \setminus P(\epsilon_{P})$. Indeed, each line $\ell\in\calL$ hitting $P$ transversely inside $U(0,1)$, where $U(0,1)$ is the unit open ball centred at the origin, must pass through both $U_{1}(\epsilon_{\ell,P})$ and  $U_{2}(\epsilon_{\ell,P})$ for some $\epsilon_{\ell,P} > 0$ small enough, so \eqref{form34} follows from Lemma~\ref{lem:bvp-average-hyperplanes} by measure-theoretic considerations.

Next, if~\eqref{form34} holds for some hyperplane $P\ni 0$ and some $\epsilon_{P} > 0$, it also holds for all hyperplanes $P'\ni 0$ in an $(\epsilon_{P}/2)$-neighbourhood of $P$, where the latter means that $P'\cap B(0,2) \subset P(\epsilon_{P}/2)$, and for the connected components $U_{1}'(\epsilon_{P}/2)$ and $U_{2}'(\epsilon_{P}/2)$ of $B(0,1) \setminus P'(\epsilon_{P}/2)$, simply using $P'(\epsilon_{P}/2)\cap B(0,1) \subset P(\epsilon_{P}) \cap B(0,1)$ for such $P'$. Finally, the $(\epsilon_{P}/2)$-neighbourhoods form an open cover of the compact set of hyperplanes $\{P \subset \Hek : 0 \in P\}$, so one may pick a finite subcover. Then choosing $\overline\epsilon>0$ less than the minimum of the numbers $\epsilon_{P}/2$ occurring in this finite cover, and less than the dimensional constant implicit in~\eqref{form34} proves the lemma.
\end{proof}

We now turn to the proof of~\eqref{e:bvp-in-average} from which Proposition~\ref{BVPProp} follows.

\begin{proof}[Proof of~\eqref{e:bvp-in-average}]
Using left translations and dilations, we may assume with no loss of generality that $p=0\in S$ and $r=1$. Let $\epsilon>0$ be a suitable constant to be chosen small enough later, depending only on $k$ and the Condition~\textbf{B} constant of $S$. Let $\lambda >0$ be another suitable constant to be chosen small enough, depending on $\epsilon$, $k$, and on the upper Ahlfors-regularity and Condition~\textbf{B} constants of $S$. We argue by contradiction and assume that
\begin{equation} \label{e:proof-bvp-average}
\lambda \geq \int_{\Sk} \calHk (\pi_{\W_\nu} (B(0,1) \cap S)) \, d\nu= \h(\calL(B(0,1) \cap S))
\end{equation}
where the last equality follows from~\eqref{e:h-l(A)}. Recalling the definition~\eqref{e:def-totalwidth} of the horizontal width of $S$ in $B(0,1)$, we get
\begin{equation*}
\wid_{B(0,1)}(S) \leq 2\, \h(\calL(B(0,1) \cap S)) \leq 2 \lambda.
\end{equation*}
Assuming that $\epsilon < \overline \epsilon$ where $\overline \epsilon$ is the dimensional constant given by Corollary~\ref{cor:main}, we now let $0<\gamma<1$ be given by Corollary~\ref{cor:main} applied to the parameter $\epsilon$ and we let $\lambda$ be small enough so that $2\lambda \leq (80 \gamma)^{2k+3}$. Then the previous inequality together with Corollary~\ref{cor:main} imply the existence of a hyperplane $P\subset \Hek$ such that
\begin{equation}\label{form30}
B(0,\gamma) \cap S \subset P(\epsilon\gamma)
\end{equation}
where $P(\epsilon\gamma)$ denotes the closed $\epsilon\gamma$-neighbourhood of $P$.
Replacing $\epsilon$ by $2\epsilon$, we may assume with no loss of generality that $0 \in P$.

To reach a contradiction, we show now that if $S \ni 0$ is a closed set with $\diam S >1$ satisfying Condition \textbf{B} and~\eqref{form30} for some hyperplane $P\ni 0$, then
\begin{equation} \label{form29}
\calL(U_{1})\cap \calL( U_{2}) \subset \calL (B(0,\gamma) \cap S)
\end{equation}
where $U_1$ and $U_2$ denote the two connected components of $B(0,\gamma) \setminus P(\epsilon\gamma)$, provided $\epsilon$ is chosen small enough. To see this, we prove that $U_1$ and $U_2$ are contained in different connected components of $S^c$. Indeed, apply Condition~\textbf{B} to the ball $B(0,\gamma)$ to find two distinct connected components $\Omega_{1}$, $\Omega_{2}$ of $S^c$ and two balls $B_{1} \subset \Omega_1 \cap B(0,\gamma)$ and $B_{2} \subset \Omega_2 \cap B(0,\gamma)$ with radii $\sim_{reg} \gamma$. If $\epsilon > 0$ is chosen small enough, depending on the radii of these balls, it follows that both balls $B_{1}$, $B_{2}$ intersect either $U_{1}$ or $U_{2}$. Moreover, if $B_{1}$ intersects $U_{1}$, say, then $B_{2}$ cannot intersect $U_{1}$. Indeed, otherwise $B_{1}$, $B_{2}$ lie in the same connected component of $S^c$ as a consequence of~\eqref{form30}. Hence $B_{2}$ intersects $U_{2}$. Then it follows, once again from~\eqref{form30}, that $U_{1} \subset \Omega_{1}$ and $U_{2} \subset \Omega_{2}$. Now, fix $\ell \in \calL(U_{1})\cap \calL( U_{2})$, and let $u_{j} \in \ell \cap U_{j}$ for $j \in \{1,2\}$. It follows from the previous argument that the segment $[u_{1},u_{2}]$ has its end points in different connected components of $S^c$ and hence must cross $S$. Moreover, by Euclidean convexity of $B(0,\gamma)$, the segment $[u_{1},u_{2}]$ crosses $S$ inside $B(0,\gamma)$ and this concludes the proof of~\eqref{form29}.

If $\epsilon>0$ was chosen small enough, we then get from Lemma~\ref{lem:BVP-intermediate-step}
\begin{equation*}
\h(\calL(B(0,1) \cap S)) \geq \h(\calL (B(0,\gamma) \cap S)) \geq \h(\calL(U_{1})\cap \calL( U_{2})) \geq \overline\epsilon \, \gamma^{2k+1}
\end{equation*}
for some dimensional constant $\overline \epsilon >0$, which contradicts~\eqref{e:proof-bvp-average} provided $\lambda$ was chosen small enough. This concludes the proof of~\eqref{e:bvp-in-average}.
\end{proof}

\section{The bilateral weak geometric lemma}\label{sect:bwgl}

The main result in this section is the validity of the bilateral weak geometric lemma for vertical hyperplanes, Definition~\ref{def:bwgl-vertical-hyperplanes}, for Semmes surfaces in $\Hek$, see Proposition~\ref{prop:bwgl-vertical-hyperplanes}.

\subsection{Width is integrable}\label{CarlesonSection} The aim of this section is to prove that if $E \subset \Hek$ is a closed upper Ahlfors-regular set, then balls with large horizontal width are quite rare. Namely, they satisfy the following Carleson packing condition.

\begin{proposition}\label{NMCarleson} Let $E \subset \Hek$ be a closed upper Ahlfors-regular set. Then,
\begin{displaymath}
\int_{0}^{R} \calHk (\{q \in E \cap B(p,R) : \wid_{B(q,s)}(E) > \epsilon\}) \,  \frac{ds}{s} \lesssim_{reg} \frac{R^{2k+1}}{\epsilon}
\end{displaymath}
for all $\epsilon > 0$, $p \in E$, and $R > 0$.
\end{proposition}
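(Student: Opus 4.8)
The assertion is a Carleson packing estimate, so the natural strategy is to bound the left-hand side by integrating the horizontal width itself and then using Fubini plus the upper Ahlfors-regularity of $E$. Concretely, I would first reduce to controlling the ``mass of width''
\[
\int_{0}^{R} \int_{E \cap B(p,R)} \wid_{B(q,s)}(E) \, d\calHk(q) \, \frac{ds}{s},
\]
since by Chebyshev's inequality the set $\{q \in E \cap B(p,R) : \wid_{B(q,s)}(E) > \epsilon\}$ has $\calHk$-measure at most $\epsilon^{-1} \int_{E \cap B(p,R)} \wid_{B(q,s)}(E) \, d\calHk(q)$, and then the desired bound follows once the double integral above is shown to be $\lesssim_{reg} R^{2k+1}$.

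\textbf{Key steps.} The heart of the matter is thus to estimate $\int_0^R \int_{E \cap B(p,R)} \wid_{B(q,s)}(E)\, d\calHk(q)\, ds/s$. Unfolding the definition~\eqref{e:def-totalwidth} of $\wid_{B(q,s)}(E)$, this is
\[
\int_0^R \frac{1}{s^{2k+2}} \int_{E \cap B(p,R)} \int_{\calL} \diam(B(q,s) \cap E \cap \ell)\, d\h(\ell)\, d\calHk(q)\, \frac{ds}{s}.
\]
Here I would exchange the order of integration to put the $\h$-integral over lines $\ell$ outermost, then the $\calHk$-integral over $q \in E$, then the $s$-integral. For a fixed line $\ell$ and fixed $q$, the quantity $\diam(B(q,s)\cap E \cap \ell)$ is nonzero only when $\ell$ meets $B(q,s)$, i.e.\ $\dist(q,\ell) \le s$, and then it is at most $2s$. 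The crucial gain is that $B(q,s)\cap E \cap \ell$ is a subset of $E \cap \ell$ of diameter $\le 2s$ lying within distance $\le 2s$ of $q$; so, using that $E \cap \ell$ has $\calH^1$-measure controlled by upper Ahlfors-regularity (restricted to the line $\ell$, the set $E$ meets a segment of length $t$ in $\calH^1$-measure at most $\sim C t$, since $\calH^s(B(x,t)\cap E) \le C t^{s}$ with $s = 2k+1$ and the relevant comparison for one-dimensional slices — actually we only need the crude bound $\diam(B(q,s)\cap E \cap \ell) \le 2s$ together with a packing argument in $q$), I would bound the inner $\calHk$-integral over $\{q \in E : \dist(q,\ell)\le s\}$ by a constant times $s^{2k+1}$ times the $\calH^1$-length of a neighborhood of $\ell$ intersected with the relevant window, via covering $\{q \in E : \dist(q,\ell) \le s, q \in B(p,R)\}$ by boundedly many balls $B(x_i,s)$ with $x_i \in E$ (using upper regularity to see there are $\lesssim_{reg} R/s$ of them per unit length along $\ell$ — more precisely $\lesssim_{reg} (R/s)^{1}$ after accounting for the tube geometry) and summing $\calHk(E \cap B(x_i,2s)) \lesssim_{reg} s^{2k+1}$.

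\textbf{The main obstacle, and the cleaner route.} The honest difficulty is the bookkeeping in the tube around a line: one must show $\int_0^R \calHk(\{q \in E \cap B(p,R): \dist(q,\ell) \le s\})\, \diam(\cdots)\, ds/s^{2k+3}$ integrates against $d\h(\ell)$ to give $\lesssim_{reg} R^{2k+1}$. The slick way is to swap the roles: fix the pair $(\ell, q)$ and observe that the $s$-integral $\int_{\dist(q,\ell)}^{R} \diam(B(q,s)\cap E \cap \ell)\, s^{-2k-3}\, ds$ is, after noting $\diam(B(q,s)\cap E\cap\ell) \le \diam(B(q,s)\cap \ell) \le 2s$, at most $2\int_{\dist(q,\ell)}^\infty s^{-2k-2}\, ds \lesssim_k \dist(q,\ell)^{-2k-1}$; then $\int_{\calL} \mathbf{1}_{\{\dist(q,\ell) \le R\}} \dist(q,\ell)^{-(2k+1)}\, d\h(\ell)$ converges and equals a dimensional constant (this is where the homogeneity $\h(\delta_r \calA) = r^{2k+1}\h(\calA)$ and the explicit formula~\eqref{e:def-meas-h-H2k+1} enter — one computes that the $\h$-measure of lines through a window around $q$ of ``distance scale'' $t$ is $\sim t^{2k+1}$, so the integral telescopes to $O(R^{2k+1})$ after truncation, or rather is dominated by $\log$-free decay making it finite up to the cutoff at $R$, contributing the factor $R^{2k+1}$ after the final $\calHk$-integration over $q \in E \cap B(p,R)$ using upper regularity $\calHk(E\cap B(p,R)) \lesssim_{reg} R^{2k+1}$). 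Carrying the constants carefully, one arrives at
\[
\int_0^R \int_{E\cap B(p,R)} \wid_{B(q,s)}(E)\, d\calHk(q)\, \frac{ds}{s} \lesssim_{reg} R^{2k+1},
\]
and Chebyshev finishes the proof. I expect the delicate point to be making the interchange of integrals rigorous (measurability of $\wid_{B(q,s)}(E,\ell)$ in all variables jointly — the excerpt already discusses $\h$-measurability in $\ell$, and the monotone approximation by finite unions of balls used there extends to give joint measurability) and verifying that the line-measure integral $\int_{\calL}\mathbf{1}_{\dist(q,\ell)\le R}\, \dist(q,\ell)^{-(2k+1)}\, d\h(\ell)$ is genuinely $\lesssim_k R^{2k+1}\cdot(\text{something summable})$ rather than divergent; the homogeneity of $\h$ makes the scaling work out, but the endpoint behaviour near $\dist(q,\ell)=0$ requires that the set of lines actually passing through $q$ has $\h$-measure zero, which is immediate from~\eqref{e:def-meas-h-H2k+1}.
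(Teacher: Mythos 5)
Your overall architecture (reduce to the $L^{1}$ bound
\begin{displaymath}
\int_{0}^{R}\int_{E\cap B(p,R)}\wid_{B(q,s)}(E)\,d\calHk(q)\,\frac{ds}{s}\lesssim_{reg}R^{2k+1}
\end{displaymath}
and finish with Chebyshev) is exactly the paper's: Proposition~\ref{NMCarleson} is deduced from Theorem~\ref{widthL1} applied to $\mu=\calHk|_{E\cap B(p,R)}$, after localising $E$ to $E\cap B(p,2R)$. But your proof of the key estimate has a genuine gap, and it is precisely the point you flag at the end. Your ``cleaner route'' rests on the crude bound $\diam(B(q,s)\cap E\cap\ell)\le 2s$, which after the $s$-integration leaves you with $\int_{\calL}\mathbf{1}_{\{\dist(q,\ell)\le R\}}\dist(q,\ell)^{-(2k+1)}\,d\h(\ell)$. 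This integral \emph{diverges}: by left-invariance and homogeneity, $\h(\{\ell:\dist(q,\ell)\le t\})=\h(\calL(B(q,t)))\sim_k t^{2k+1}$, so decomposing dyadically in $\dist(q,\ell)$ each annulus $\{2^{-j-1}R<\dist(q,\ell)\le 2^{-j}R\}$ contributes $\sim_k 1$, and the sum over $j$ is infinite. The fact that the lines through $q$ have $\h$-measure zero does not help; the divergence is logarithmic, not concentrated on a null set. The same logarithm defeats your first ``packing'' route, which also only uses $\diam\le 2s$: that bound gives $\wid_{B(q,s)}(E)\lesssim 1$ uniformly, and $\int_0^R ds/s=\infty$.

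The missing idea is that $\diam(E\cap B\cap\ell)$ must be exploited as a quantity attached to the \emph{finite point set} $E\cap\ell$, not bounded by the radius of $B$. The paper first localises so that $\calHk(E)<\infty$, whence the Crofton-type bound~\eqref{crofton2} gives $\card(E\cap\ell)<\infty$ for $\h$-a.e.\ $\ell$. Enumerating $E\cap\ell=\{a_1,\dots,a_N\}$ in order and setting $I_j=[a_j,a_{j+1}]$, one has $\diam(E\cap B\cap\ell)=\sum_{I_j\subset B}\diam(I_j)$; crucially, a gap $I_j$ contributes only at dyadic scales $2^{n}\gtrsim\diam(I_j)$ and to boundedly many balls per scale, so $\sum_{n,i}\diam(I_j)/2^{n}\lesssim_k 1$ for each $j$, giving a total of $\lesssim_k\card(E\cap\ell)$, which integrates over $\calL$ to $\lesssim_k\calHk(E\cap B(p,2R))\lesssim_{reg}R^{2k+1}$. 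This gap decomposition is exactly what converts the divergent $\int ds/s$ into a convergent geometric series, and it is the step your argument does not supply.
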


Proposition~\ref{NMCarleson} is a consequence of the following stronger result, which states that the map $(q,s) \mapsto \wid_{B(q,s)}(E)$ is $L^{1}$-integrable under mild assumptions. In the next proposition, an \emph{upper Ahlfors-regular measure} $\mu$ (with dimension $2k+1$) is a locally finite Borel measure for which there exists $C>0$ such that
\begin{displaymath}
\mu(B(p,r)) \leq C\, r^{2k+1}, \qquad p \in \Hek, \quad r>0.
\end{displaymath}

\begin{thm}\label{widthL1} Assume that $E \subset \Hek$ is a closed set and $\mu$ is an upper Ahlfors-regular measure. Then
\begin{displaymath}
\int_{0}^{\infty} \int_{\Hek} \wid_{B(q,s)}(E) \, d\mu(q) \, \frac{ds}{s} \leq C\calHk (E),
\end{displaymath}
where $C>0$ depends only on the upper Ahlfors-regularity constant of $\mu$.
\end{thm}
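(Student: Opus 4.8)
The strategy is a Fubini argument: interchange the $s$-integral and the $\mu$-integral with the defining integral of $\wid_{B(q,s)}(E)$, which itself unwinds into an integral over horizontal lines $\ell$ via $\h$, and then over the fiber $\W_\nu$ via~\eqref{e:def-meas-h-H2k+1}. Concretely, recall that
\[
\wid_{B(q,s)}(E) = \diam(B(q,s)\cap E\cap \ell)\qquad\text{(for the line $\ell$ through the relevant point),}
\]
but in the averaged width~\eqref{e:def-totalwidth} this becomes $\tfrac{1}{s^{2k+2}}\int_{\calL}\diam(B(q,s)\cap E\cap\ell)\,d\h(\ell)$. So the left-hand side of the theorem is, after inserting the $\tfrac1{s^{2k+2}}$ weight,
\[
\int_0^\infty\!\!\int_{\Hek}\!\frac{1}{s^{2k+2}}\int_{\calL}\diam(B(q,s)\cap E\cap\ell)\,d\h(\ell)\,d\mu(q)\,\frac{ds}{s}.
\]
The key geometric observation is that for a \emph{fixed} line $\ell$ and fixed scale $s$, the quantity $\diam(B(q,s)\cap E\cap\ell)$ can only be positive when $q$ lies within distance $s$ of $E\cap\ell$, and in fact we want to bound $\int_{\Hek}\diam(B(q,s)\cap E\cap\ell)\,d\mu(q)$. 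The plan is to bound $\diam(B(q,s)\cap E\cap\ell)\le 2s$ trivially and note it vanishes unless $q\in B(E\cap\ell, s)$; but this crude bound loses too much, so instead I would use that $\diam(B(q,s)\cap E\cap\ell)\le \calH^1(B(q,s)\cap \ell)$ is not what helps either—rather, the right move is: for each point $x\in E\cap\ell$ contributing to the diameter, $q$ must lie in $B(x,s)$, and $\mu(B(x,s))\lesssim_{reg} s^{2k+1}$ by upper regularity. Summing/integrating over a maximal $s$-separated subset of $E\cap\ell$ and using that the diameter of the whole intersection is what we integrate, one gets, after the $\frac{ds}{s}$-integration telescoping in the classical Carleson-type way,
\[
\int_0^\infty \frac{1}{s^{2k+2}}\Big(\int_{\Hek}\diam(B(q,s)\cap E\cap\ell)\,d\mu(q)\Big)\frac{ds}{s}\;\lesssim_{reg}\; \calH^1\big((\text{convex hull of } E\cap\ell)\big)\quad\text{on }\ell,
\]
or more honestly $\lesssim_{reg}$ a quantity that integrates in $\ell$ against $\h$ to give $\calHk(E)$.

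The cleanest route, which I would pursue, is to \emph{swap the order differently}: fix $s$ first. For fixed $s$, one shows $\int_{\Hek}\wid_{B(q,s)}(E)\,d\mu(q)\lesssim_{reg} s^{-1}\cdot s\cdot\calHk(E_s)$ type bound is hopeless dimensionally; the correct pairing is between the $q$-integral against $\mu$ (which is $(2k+1)$-regular, matching the codimension-one nature of $E$) and the $1$-dimensional width on each line, with the extra $s^{-2k-2}$ absorbed by $\mu(B(x,s))\lesssim s^{2k+1}$ leaving one factor $s^{-1}$ that pairs with $\frac{ds}{s}$ near the relevant scale. I would make this rigorous as follows: by~\eqref{e:def-meas-h-H2k+1}, write
\[
\text{LHS}=\int_{\Sk}\int_{\W_\nu}\Big[\int_0^\infty\int_{\Hek}\frac{\diam(B(q,s)\cap E\cap\ell_{w,\nu})}{s^{2k+3}}\,d\mu(q)\,ds\Big]\,d\calHk(w)\,d\nu,
\]
where $\ell_{w,\nu}=w\cdot\LL_\nu$. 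For the inner bracket, fix the line $\ell=\ell_{w,\nu}$, parametrized isometrically by $\R$ (horizontal lines are isometric to $\R$, as recalled in the excerpt). The set $E\cap\ell$ is closed in $\R$; let $K_\ell$ be an interval covering it (e.g.\ its convex hull intersected with a large ball, or handle the unbounded case by a limiting argument). For $q$ to have $\diam(B(q,s)\cap E\cap\ell)>0$ we need $B(q,s)\cap E\cap\ell\neq\emptyset$, so $q\in B(x_0,s)$ for some $x_0\in E\cap\ell$; moreover $\diam(B(q,s)\cap E\cap\ell)\le 2s$. Thus
\[
\int_{\Hek}\diam(B(q,s)\cap E\cap\ell)\,d\mu(q)\le 2s\,\mu\big(B(E\cap\ell,\,s)\big).
\]
Now $B(E\cap\ell,s)$ can be covered by $\lesssim_{reg} \calH^1(K_\ell)/s + 1$ balls of radius $2s$, each of $\mu$-measure $\lesssim_{reg} s^{2k+1}$, so $\mu(B(E\cap\ell,s))\lesssim_{reg} s^{2k}(\calH^1(K_\ell)+s)$. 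Hence the inner bracket is
\[
\lesssim_{reg}\int_0^{\infty}\frac{2s\cdot s^{2k}(\calH^1(K_\ell)+s)}{s^{2k+3}}\,ds=\int_0^\infty\frac{2(\calH^1(K_\ell)+s)}{s^{2}}\,ds,
\]
which diverges—showing that the \emph{trivial} bound $\diam\le 2s$ is too lossy at small scales and the naive ball count is too lossy at large scales. This tells me the real argument must exploit the interval structure more carefully: at small $s$ the width is not $\sim s$ but genuinely small (it is a diameter of $E\cap\ell$ localized to a tiny window, and $\int_0^{s_0}\wid_{B(q,s)}\frac{ds}{s}$ behaves like a Carleson sum for the "jumps" of $E\cap\ell$), and at large $s$ one should not integrate $q$ over all of $\Hek$ but note the width is controlled by $\diam(E\cap\ell\cap B(q,s))\le \min\{2s,\ \diam(E\cap\ell)\}$ while simultaneously $q$ ranges over a set of $\mu$-measure growing only like $s^{2k+1}$.

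\emph{The main obstacle}, therefore, is organizing the scale-by-scale bookkeeping so that the $\frac{ds}{s}$ integral converges and the constant comes out as $C\calHk(E)$: one needs, for each fixed line $\ell$, that $\int_0^\infty\int_{\Hek}\frac{\diam(B(q,s)\cap E\cap\ell)}{s^{2k+3}}\,d\mu(q)\,ds\lesssim_{reg}\calH^1(E\cap\ell)$ (or rather $\lesssim$ the length of the smallest interval containing $E\cap\ell$, which after integration over $\ell$ against $\h$ gives, by~\eqref{e:h-l(A)} applied to $E$ together with the coarea/kinematic formula for horizontal lines, a bound $\lesssim_{reg}\calHk(E)$). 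I expect this per-line estimate to follow from a dyadic decomposition: writing $G_\ell:=B(q_0,s)$-type windows, for each dyadic scale $2^{-j}$ group the points $q$ with $\wid_{B(q,2^{-j})}(E\cap\ell)\in(2^{-j-1},2^{-j}]$-ish, bound their $\mu$-measure by a covering argument using that such $q$ cluster near "$2^{-j}$-substantial" portions of $E\cap\ell$, and sum a geometric series. The remaining step is then to pass from the per-line bound to the statement by Fubini/Tonelli (legitimate since everything is nonnegative and measurable—the measurability of $(q,s,\ell)\mapsto\diam(B(q,s)\cap E\cap\ell)$ follows, as in the excerpt's justification for $\wid_B(E,\cdot)$, by approximating the compact set $E\cap(\text{ball})$ from outside by finite unions of small balls), and to recognize $\int_{\Sk}\int_{\W_\nu}\calH^1(E\cap\ell_{w,\nu})\,d\calHk(w)\,d\nu$ as comparable to $\calHk(E)$ via the kinematic formula of Montefalcone~\cite{MR2165404} already invoked in the proof of Lemma~\ref{lem:bvp-average-hyperplanes}. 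Once Theorem~\ref{widthL1} is in hand, Proposition~\ref{NMCarleson} is immediate: if $\wid_{B(q,s)}(E)>\epsilon$ on a set $G\subset E\cap B(p,R)$ of points $q$, then $\int_0^R\int_G \epsilon\,d\calHk(q)\frac{ds}{s}\le\int_0^\infty\int_{\Hek}\wid_{B(q,s)}(E)\,d\calHk(q)\frac{ds}{s}$ upon taking $\mu=\calHk\restriction E$ (which is upper regular since $E$ is), giving the stated $\lesssim_{reg} R^{2k+1}/\epsilon$.
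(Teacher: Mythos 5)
Your overall architecture --- Tonelli, dyadic scales, matching $\mu(B(x,s))\lesssim_{reg} s^{2k+1}$ against the $s^{-2k-2}$ normalisation, and a final Crofton-type integration over $\calL$ --- is the right skeleton, and your deduction of Proposition~\ref{NMCarleson} at the end is exactly the paper's. But the heart of the proof, the per-line estimate, is both left unproven (``I expect this\dots to follow from a dyadic decomposition'') and aimed at the wrong target. You propose to bound the per-line quantity by $\calH^1(E\cap\ell)$, or alternatively by $\diam(E\cap\ell)$, and then integrate in $\ell$. The first is hopeless: for $E$ of finite $\calHk$ measure, $E\cap\ell$ is finite for $\h$-a.e.\ $\ell$, so $\calH^1(E\cap\ell)=0$ a.e.\ while the per-line integral is typically positive. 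The second is false as a per-line bound: if $E\cap\ell=\{a,b\}$ with $|a-b|=d$, then $\diam(B(q,s)\cap E\cap\ell)=d$ precisely when $q\in B(a,s)\cap B(b,s)$, which forces $s\geq d/2$ and has $\mu$-measure $\lesssim_{reg} s^{2k+1}$, so the per-line integral is of size $d\int_{d/2}^{\infty}s^{-2}\,ds\sim 1$ --- independent of $d$, hence $\gg\diam(E\cap\ell)$ when $d$ is small. Moreover $\int_{\calL}\diam(E\cap\ell)\,d\h(\ell)$ is not controlled by $\calHk(E)$ (consider two small, far-apart surface pieces). The correct right-hand side is $\card(E\cap\ell)$, paired with the Crofton inequality \eqref{crofton2}, $\int_{\calL}\card(E\cap\ell)\,d\h(\ell)\lesssim_k\calHk(E)$; note this is a one-sided bound, not the ``comparability'' you invoke, and its integrand is a cardinality, not a length.

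The missing mechanism, which the paper supplies, is a gap decomposition on each line: enumerate $E\cap\ell=\{a_1<\dots<a_N\}$ (finite a.e.\ once one reduces to $\calHk(E)<\infty$), set $I_j:=[a_j,a_{j+1}]$, and use Euclidean convexity of Kor\'anyi balls to write $\diam(E\cap B\cap\ell)=\sum_{I_j\subset B}\diam(I_j)$. After discretising scales dyadically and space by maximal $2^{n}$-nets (which replaces your $q$-integral by $\sum_i\wid_{B_i^{n}}(E)\,\mu(B(q_i^{n},2^{n}))$ with boundedly overlapping balls $B_i^{n}$), each gap $I_j$ contributes $\sum_{n\,:\,2^{n}\gtrsim\diam(I_j)}\diam(I_j)/2^{n}\lesssim_k 1$, a convergent geometric series, and there are at most $\card(E\cap\ell)$ gaps; then \eqref{crofton2} finishes. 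The divergence you correctly diagnose in your naive estimate is exactly what this decomposition repairs, but as written your proposal stops at identifying the obstacle rather than resolving it.
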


The only tool needed in the proof of Theorem~\ref{widthL1} is the following Crofton-formula type upper bound proven in~\cite[Lemma 5.3]{CFO2} in $\He^1$, whose proof easily extends to all higher dimensional Heisenberg groups. Namely, for every $\nu\in\Sk$, one has
\begin{displaymath}
\int^{\ast}_{\W_\nu} \card(E \cap \pi_{\W_\nu}^{-1}\{p\}) \, d\calHk (p) \lesssim_k \calHk (E), \qquad E \subset \Hek.
\end{displaymath}
Here $\int^{\ast}$ stands for the upper integral, see \cite[Chapter 1]{zbMATH01249699}. It is not hard to see that the integrand $p \mapsto \card(E \cap \pi_{\W_{\nu}}^{-1}\{p\})$ is a Borel function when $E \subset \Hek$ is closed. Now, recalling the definition~\eqref{e:def-meas-h-H2k+1} of the measure $\h$, it follows that
\begin{equation}\label{crofton2}
\int_\calL \card(E \cap \ell) \, d\h(\ell) = \int_{\Sk} \int_{\W_{\nu}} \card(E \cap \pi_{\W_\nu}^{-1}\{p\}) \, d\calHk (p) \, d\nu \lesssim_k \calHk (E)
\end{equation}
for $E \subset \Hek$ closed.

\begin{proof}[Proof of Theorem~\ref{widthL1}] With no loss of generality, assume that $\calHk (E) < +\infty$. Note that width is approximately monotone in the sense that
\begin{displaymath}
\wid_{B}(E) \lesssim_k \wid_{B'}(E)
\end{displaymath}
for all balls $B \subset B'$ with comparable radii (see~\eqref{e:def-totalwidth}). In particular, we have
\begin{equation}\label{form36}
\int_{0}^{\infty} \int_{\Hek} \wid_{B(q,s)}(E) \, d\mu(q) \, \frac{ds}{s} \lesssim_k \sum_{n \in \Z} \int_{\Hek} \wid_{B(q,2^{n})}(E) \, d\mu(q).
\end{equation}
Next, for $n \in \Z$ fixed, pick a maximal $2^{n}$-net $\{q^{n}_{0},q^{n}_{1},\ldots\} \subset \Hek$, and write $B_{i}^{n} := B(q^n_{i},2^{n + 1})$. Then
\begin{displaymath}
\wid_{B(q,2^{n})}(E) \lesssim_k \wid_{B_{i}^{n}}(E), \qquad q \in B(q_{i}^{n},2^{n}). \end{displaymath}
Since the balls $B(q_{i}^{n},2^{n})$, $i \in \N$, cover $\Hek$, we can further estimate as follows,
\begin{align*} \eqref{form36}
& \lesssim_k \sum_{n \in \Z} \sum_{i \in \N} \wid_{B_{i}^{n}}(E) \cdot \mu(B(q_{i},2^{n})) \lesssim_{reg} \sum_{n \in \Z} \sum_{i \in \N} \wid_{B_{i}^{n}}(E) \cdot 2^{(2k+1)n}\\
& \lesssim_{reg} \sum_{n \in \Z} \sum_{i \in \N} \int_\calL \frac{\diam(E \cap B_{i}^{n} \cap \ell)}{2^{n}} \, d\h(\ell) = \int_\calL \sum_{n \in \Z} \sum_{i \in \N} \frac{\diam(E \cap B_{i}^{n} \cap \ell)}{2^{n}} \, d\h(\ell), \end{align*}
where the implicit constants in the two last inequalities depend only on $k$ and the upper Ahlfors-regularity constant of $\mu$. Now, fix a horizontal line $\ell$ such that $\card(E \cap \ell) =: N < + \infty$. This is true for $\h$ almost every line, since $\calHk (E) < +\infty$. Let $a_{1},\ldots,a_{N}$ be an enumeration of the points in $E \cap \ell$, in increasing order for some fixed orientation of $\ell$. Write $I_{j} := [a_{j},a_{j + 1}]$ for $1 \leq j \leq N - 1$, and note that
\begin{displaymath}
\diam(E \cap B_{i}^{n} \cap \ell) = \sum_{I_{j} \subset B_{i}^{n}} \diam(I_{j}), \qquad n \in \Z, \quad i \in \N.
\end{displaymath}
Consequently,
\begin{displaymath}
\sum_{n \in \Z} \sum_{i \in \N} \frac{\diam(E \cap B_{i}^{n} \cap \ell)}{2^{n}} = \sum_{j = 1}^{N - 1} \sum_{\substack{n\in\Z,i\in\N, \\ I_{j} \subset B_{i}^{n}}} \frac{\diam(I_{j})}{2^{n}}
\end{displaymath}
For $j \in \{1,\ldots,N - 1\}$ fixed, the summation over $n$ can be restricted to those values with $2^{n+2} = \diam(B_{i}^{n})  \geq \diam(I_{j})$. Moreover, for every such $n$ fixed, the balls $B_{i}^{n}$, $i \in \N$, have bounded overlap, so $I_{j} \subset B^{n}_{i}$ can occur only for $\lesssim_k 1$ indices  $i \in \N$. These observations yield
\begin{displaymath}
\sum_{j = 1}^{N - 1} \sum_{\substack{n\in\Z,i\in\N, \\ I_{j} \subset B_{i}^{n}}} \frac{\diam(I_{j})}{2^{n}} \lesssim_k N - 1 \lesssim_k \card(E \cap \ell),
\end{displaymath}
so finally
\begin{displaymath} \int_{0}^{\infty} \int_{\Hek} \wid_{B(q,s)}(E) \, d\mu(q) \, \frac{ds}{s} \lesssim_{reg} \int_\calL \card(E \cap \ell) \, d\h(\ell) \lesssim_{reg} \calHk (E), \end{displaymath}
using \eqref{crofton2}, and the proof is complete. \end{proof}

Now we prove Proposition \ref{NMCarleson}.

\begin{proof}[Proof of Proposition~\ref{NMCarleson}] For $p\in E$ and $R>0$ fixed, we apply Theorem~\ref{widthL1} with $\mu := \calHk|_{E\cap B(p,R)}$. Note that whenever $q \in B(p,r)$ and $0 < r < R$, one has
\begin{displaymath} \wid_{B(q,s)}(E) = \wid_{B(q,s)}(E \cap B(p,2R)).
\end{displaymath}
This yields
\begin{align*}
\int_{0}^{R} & \calHk (\{q \in E \cap B(p,R) : \wid_{B(q,s)}(E) > \epsilon\}) \,  \frac{ds}{s} \\
&\leq \frac{1}{\epsilon} \int_0^{\infty} \int_{\Hek} \wid_{B(q,s)}(E \cap B(p,2R)) \, d\mu(q) \, \frac{ds}{s} \lesssim_{reg} \frac{\calHk (E\cap B(p,2R))}{\epsilon}.
\end{align*}
This completes the proof by the upper Ahlfors-regularity of $E$.
\end{proof}

%%%

\subsection{Bilateral weak geometric lemma for arbitrary hyperplanes} \label{sect:BWGL-arbitrary-hyperplanes}
In this section, we prove that a Semmes surface satisfies the bilateral weak geometric lemma for arbitrary hyperplanes. Roughly speaking, this means that it is bilaterally well-approximated by hyperplanes at most scales and locations, see Definition~\ref{def:bwgl-arbitrary-hyperplanes} and Proposition~\ref{prop:bwgl-arbitrary-hyperplanes}.

Given $E\subset\Hek$, $p\in E$, and $s>0$, we define the \textit{bilateral $\beta$-number $b\beta_E(p,s)$ for arbitrary hyperplanes} inside a given ball $B(p,s)$. We mimic here the Euclidean definition, see for instance~\cite[I.2.1]{MR1251061},  and set
\begin{equation*}
b\beta_E(p,s) := \inf_{P}\, \left\{ \sup_{q \in B(p,s) \cap E} \frac{\dist(q,P)}{s} + \sup_{q \in B(p,s) \cap P} \frac{\dist(q,E)}{s} \right\}
\end{equation*}
where the infimum runs over all hyperplanes $P\subset\Hek$. This definition takes into account the distance from points in $E$ to hyperplanes $P$, as well as the distance from points in $P$ to $E$.

\begin{definition}[BWGL for arbitrary hyperplanes] \label{def:bwgl-arbitrary-hyperplanes}
We say that a set $E\subset\Hek$ satisfies the \textit{bilateral weak geometric lemma} (or BWGL in short) \textit{for arbitrary hyperplanes} if
\begin{equation} \label{e:def-bwgl-arbitrary-hyperplanes}
\int_0^R \calHk (\{q\in E \cap B(p,R): b\beta_E(q,s) > \epsilon \}) \,  \frac{ds}{s} \lesssim_\epsilon R^{2k+1}
\end{equation}
for all $\epsilon>0$, $p\in E$, and $R>0$.
\end{definition}

The fact that Semmes surfaces satisfy BWGL for arbitrary hyperplanes comes as a rather immediate consequence of Corollary~\ref{cor:main} combined with Proposition~\ref{NMCarleson}.

\begin{proposition} [BWGL for arbitrary hyperplanes for Semmes surfaces] \label{prop:bwgl-arbitrary-hyperplanes}
Let $S \subset \Hek$ be a Semmes surface. Then $S$ satisfies BWGL for arbitrary hyperplanes. Moreover, the implicit constant on the right-hand side of~\eqref{e:def-bwgl-arbitrary-hyperplanes} can be chosen depending only on $\epsilon,k$ and on the upper Ahlfors-regularity and Condition~\textbf{B} constants of $S$.
\end{proposition}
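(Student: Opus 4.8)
The plan is to combine Corollary~\ref{cor:main}, which converts smallness of the horizontal width into bilateral flatness at a slightly smaller scale, with Proposition~\ref{NMCarleson}, which says that balls with large horizontal width obey a Carleson packing condition. Fix $\epsilon>0$, $p\in S$, and $R>0$. We may assume $\epsilon<\overline\epsilon$, the dimensional constant from Corollary~\ref{cor:main}, since decreasing $\epsilon$ only strengthens~\eqref{e:def-bwgl-arbitrary-hyperplanes}. Let $0<\gamma<1$ be the constant furnished by Corollary~\ref{cor:main} for this $\epsilon$; it depends only on $\epsilon$, $k$, and the regularity and Condition~\textbf{B} constants of $S$. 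Set $\eta:=(80\gamma)^{2k+3}$.

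First I would record the pointwise implication: if $q\in S\cap B(p,R)$ and $0<s<\diam S$ satisfy $\wid_{B(q,s)}(S)\le\eta$, then by Corollary~\ref{cor:main} there is a hyperplane $P$ with $\dist(x,P)\le\epsilon\gamma s$ for all $x\in S\cap B(q,\gamma s)$ and $\dist(x,S)\le\epsilon\gamma s$ for all $x\in P\cap B(q,\gamma s)$; dividing by $\gamma s$ and using $P$ as a competitor in the infimum defining $b\beta_S(q,\gamma s)$ gives $b\beta_S(q,\gamma s)\le 2\epsilon$. Equivalently, renaming $t:=\gamma s$, we get: whenever $b\beta_S(q,t)>2\epsilon$ (and $t<\gamma\diam S$), one must have $\wid_{B(q,t/\gamma)}(S)>\eta$. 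So the ``bad'' set for the $\beta$-numbers at scale $t$ is contained in the bad set for horizontal width at scale $t/\gamma$, i.e.
\[
\{q\in S\cap B(p,R): b\beta_S(q,t)>2\epsilon\}\subset\{q\in S\cap B(p,R): \wid_{B(q,t/\gamma)}(S)>\eta\}
\]
for all $0<t<\gamma\diam S$.

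Next I would run the Carleson estimate. Integrating the above containment against $dt/t$ over $0<t<R$ and changing variables $t\mapsto t/\gamma$ (which leaves $dt/t$ invariant and changes the upper limit to $R/\gamma$), the left-hand side of~\eqref{e:def-bwgl-arbitrary-hyperplanes} with $2\epsilon$ in place of $\epsilon$ is bounded by
\[
\int_0^{R/\gamma}\calHk\bigl(\{q\in E\cap B(p,R):\wid_{B(q,s)}(S)>\eta\}\bigr)\,\frac{ds}{s},
\]
where I would enlarge the outer ball from $B(p,R)$ to $B(p,R/\gamma)$ (only increasing the quantity) so that Proposition~\ref{NMCarleson} applies directly with radius $R/\gamma$ and threshold $\eta$; it yields a bound $\lesssim_{reg}(R/\gamma)^{2k+1}/\eta\lesssim_\epsilon R^{2k+1}$, since $\gamma$ and $\eta$ depend only on $\epsilon$ and the structural constants. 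Finally, one handles scales $t\ge\gamma\diam S$ (when $R$ is comparable to or larger than $\diam S$) trivially, because $b\beta_S(q,t)\le 2$ always and, in any case, once $t\gtrsim\diam S$ the relevant hyperplane can be taken through $S$ itself; the contribution of the range $\gamma\diam S\le t<R$ to $\int dt/t$ is at most $\log(1/\gamma)\cdot\calHk(S\cap B(p,R))\lesssim_{reg}R^{2k+1}$ by upper Ahlfors-regularity, so it is absorbed. Since $\epsilon>0$ was arbitrary, replacing $2\epsilon$ by $\epsilon$ throughout (which only changes the implicit constant) gives~\eqref{e:def-bwgl-arbitrary-hyperplanes}.

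The only genuinely delicate point is matching the geometry of the balls in Corollary~\ref{cor:main} with the form of Proposition~\ref{NMCarleson}: Corollary~\ref{cor:main} gives flatness in $B(q,\gamma r)$ from small width in $B(q,r)$, so the scale at which the $\beta$-number is controlled and the scale at which width is measured differ by the fixed factor $\gamma$, and one must be careful that the change of variables in $\int dt/t$ together with the (harmless) enlargement of the outer ball from $B(p,R)$ to $B(p,R/\gamma)$ keeps all constants of the allowed type. Everything else is bookkeeping; in particular the restriction $s<\diam S$ implicit in the width/$\beta$ hypotheses is dealt with by the trivial estimate for large scales as above. I expect no conceptual obstacle, since Corollary~\ref{cor:main} and Proposition~\ref{NMCarleson} have already done the real work.
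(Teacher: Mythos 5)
Your argument is correct and is essentially the paper's own proof: the contrapositive of Corollary~\ref{cor:main} shows that $b\beta_S(q,s)>\epsilon$ forces $\wid_{B(q,\gamma^{-1}s)}(S)>(80\gamma)^{2k+3}$, and the Carleson packing condition of Proposition~\ref{NMCarleson}, applied in the enlarged ball $B(p,\gamma^{-1}R)$ after the change of variables, finishes the estimate exactly as you describe. Your extra care with the range $s\ge\gamma\diam S$ is welcome (the paper glosses over it), though the logarithmic factor there is $\log\bigl(R/(\gamma\diam S)\bigr)$ rather than $\log(1/\gamma)$; the bound $\lesssim R^{2k+1}$ still holds since $\calHk(S)\lesssim(\diam S)^{2k+1}$ and $x^{2k+1}\log(1/x)$ is bounded on $(0,1]$.
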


\begin{proof}
Let $0<\epsilon < \overline \epsilon$ be fixed, where $\overline \epsilon$ is the dimensional constant given by Corollary~\ref{cor:main}, and let $0<\gamma<1$ be given by Corollary~\ref{cor:main}. Let $q\in S$ and $s>0$ be such that $b\beta_S(q,s) > \epsilon$. It follows from Corollary~\ref{cor:main} that $\wid_{B(q,\gamma^{-1}s)}(S) > (80 \gamma)^{2k+3}$. Then we get from Proposition~\ref{NMCarleson} that
\begin{equation*}
\begin{split}
\int_0^R \calHk (\{&q\in S \cap B(p,R): b\beta_S(q,s) > \epsilon \}) \,  \frac{ds}{s} \\
&\leq \int_0^{\gamma^{-1}R} \calHk (\{q\in S \cap B(p,\gamma^{-1} R): \wid_{B(q,\gamma^{-1}s)}(S) > (80 \gamma)^{2k+3} \}) \, \frac{ds}{s} \\
&\lesssim_\gamma (\gamma^{-1} R)^{2k+1}
\end{split}
\end{equation*}
which concludes the proof since $\gamma$ depends only on $\epsilon,k$ and the upper Ahlfors-regularity and Condition~\textbf{B} constants of $S$.
\end{proof}

%%%

\subsection{Bilateral weak geometric lemma for vertical hyperplanes}\label{sect:bwgl-vertical-hyperplanes} In this section, we upgrade Proposition~\ref{prop:bwgl-arbitrary-hyperplanes} from the previous section to a similar statement concerning approximation by \emph{vertical hyperplanes}, Proposition~\ref{prop:bwgl-vertical-hyperplanes}. We actually note that, more generally, the validity of BWGL for arbitrary hyperplanes, recall Definition~\ref{def:bwgl-arbitrary-hyperplanes}, for a closed Ahlfors-regular set in $\Hek$ is equivalent to the validity of BWGL for vertical hyperplanes, see Definition~\ref{def:bwgl-vertical-hyperplanes} and Theorem~\ref{thm:bwgl}. A proof of a result close to Theorem~\ref{thm:bwgl} is already implicitly contained in the reduction from~\cite[Proposition 75]{NY} to~\cite[Proposition 68]{NY}, so we will only give an outline of the proof here and refer to~\cite{NY} for part of the details.

Given $E\subset\Hek$, $p\in E$, and $s>0$, we define the \textit{bilateral $\beta$-number $\beta_{v,E} (p,s)$ for vertical hyperplanes} in a similar way than the bilateral $\beta$-number for arbitratry hyperplanes except that we restrict the infimum to run over vertical hyperplanes. Namely, we set
\begin{equation*}
b\beta_{v,E}(p,s) := \inf_{\W}\,  \left\{ \sup_{q \in B(p,s) \cap E} \frac{\dist(q,\W)}{s} + \sup_{q \in B(p,s) \cap \W} \frac{\dist(q,E)}{s} \right\}
\end{equation*}
where the infimum runs over all vertical hyperplanes $\W\subset\Hek$, that is, hyperplanes of the form $\W= p'\cdot \W_\nu$ for some $p'\in\Hek$ and $\nu\in\Sk$.

\begin{definition}[BWGL for vertical hyperplanes] \label{def:bwgl-vertical-hyperplanes}
We say that a set $E\subset\Hek$ satisfies the \textit{bilateral weak geometric lemma} (or BWGL in short) \textit{for vertical hyperplanes} if
\begin{equation} \label{e:def-bwgl-vertical-hyperplanes}
\int_0^R \calHk (\{q\in E \cap B(p,R): b\beta_{v,E}(q,s) > \epsilon \}) \,  \frac{ds}{s} \lesssim_\epsilon R^{2k+1}
\end{equation}
for all $\epsilon>0$, $p\in E$, and $R>0$.
\end{definition}

Note that, since $b\beta_E(q,s) \leq b\beta_{v,E}(q,s)$ for every $q\in E$ and $s>0$, it is immediate that BWGL for vertical hyperplanes implies BWGL for arbitrary hyperplanes. As already mentioned, it turns out that both versions of the bilateral weak geometric lemma are equivalent for Ahlfors-regular sets in $\Hek$.

\begin{thm} \label{thm:bwgl}
Let $E\subset\Hek$ be a closed Ahlfors-regular set. Then $E$ satisfies BWGL for vertical hyperplanes if and only if it satisfies BWGL for arbitrary hyperplanes.
\end{thm}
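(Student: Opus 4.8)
\textbf{Proof plan for Theorem~\ref{thm:bwgl}.}
The plan is to prove the non-trivial implication: if a closed Ahlfors-regular set $E\subset\Hek$ satisfies BWGL for arbitrary hyperplanes, then it satisfies BWGL for vertical hyperplanes. The converse is immediate from $b\beta_E(q,s)\leq b\beta_{v,E}(q,s)$, as already noted. The strategy follows the reduction implicit in Naor and Young's passage from~\cite[Proposition~75]{NY} to~\cite[Proposition~68]{NY}: the obstruction to upgrading ``flat against an arbitrary hyperplane'' to ``flat against a vertical hyperplane'' is entirely controlled by how close the approximating hyperplane is to being \emph{horizontal}, and horizontal-ish scales are themselves a Carleson-sparse set by an Ahlfors-regularity (dimension) argument.

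First I would make the following observation quantifying the key geometric fact: if $P$ is a hyperplane (identifying $\Hek$ with $\Rk\times\R$) which is \emph{not} too close to being horizontal, then inside any ball $B(q,s)$ the set $P\cap B(q,s)$ is contained in a small Kor\'anyi-neighbourhood of a vertical hyperplane, and conversely a vertical hyperplane is contained in a small neighbourhood of $P$ there; quantitatively, there is a function $\eta(\theta)\to 0$ as $\theta\to 0$ such that if the Euclidean angle between $P$ and the horizontal hyperplane $H=\Rk\times\{0\}$ is at least $\theta$, then $b\beta_{v,E}(q,s)\lesssim b\beta_E(q,s)+\eta(\theta)^{-1}\cdot(\text{error})$ — more precisely, a non-horizontal hyperplane through a given point agrees, after left-translation to that point, with a left-translate of some $\W_\nu$ up to a Euclidean (hence, on bounded balls, Kor\'anyi) error governed by its tilt. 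So at scales and locations where the \emph{best-approximating} hyperplane $P$ in the definition of $b\beta_E(q,s)$ has tilt $\gtrsim\theta_0$ (a threshold to be fixed), smallness of $b\beta_E$ forces smallness of $b\beta_{v,E}$. This is the step I expect to require the most care: one must argue that the \emph{near-optimal} hyperplane for $b\beta_E$ can be taken non-horizontal at the relevant scales, which uses that $E$ is \emph{lower} Ahlfors-regular — if $E\cap B(q,s)$ were trapped in a thin neighbourhood of a horizontal hyperplane at many consecutive scales, the $(2k+1)$-dimensional Hausdorff measure of $E\cap B(q,s)$ would be too small, since a horizontal hyperplane $H$ has $\calHk(H\cap B)$ small relative to $s^{2k+1}$ (indeed $H$ has ``extra'' dimension in the vertical direction, cf.\ the computation in Example~\ref{ex:H} and~\eqref{e:lower-ar-hyperplanes}). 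Thus the ``bad'' scales — those where the optimal $P$ is nearly horizontal — are rare.

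The remaining step is the Carleson bookkeeping. Fix $\epsilon>0$. I would split, for each $q\in E\cap B(p,R)$ and $s\in(0,R)$ with $b\beta_{v,E}(q,s)>\epsilon$, into two cases. In the first case, $b\beta_E(q,s)>\epsilon'$ for a suitable $\epsilon'\sim\epsilon$; the contribution of such $(q,s)$ to the left side of~\eqref{e:def-bwgl-vertical-hyperplanes} is $\lesssim_\epsilon R^{2k+1}$ directly by BWGL for arbitrary hyperplanes, the hypothesis. In the second case $b\beta_E(q,s)\leq\epsilon'$ but $b\beta_{v,E}(q,s)>\epsilon$; by the geometric observation of the previous paragraph this forces the optimal hyperplane $P=P(q,s)$ to have tilt $\leq\theta_0=\theta_0(\epsilon)$, i.e.\ $E\cap B(q,s)$ and some piece of a horizontal hyperplane are mutually $\epsilon'$-close, making $B(q,s)$ a ``horizontally flat'' ball. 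One then shows these horizontally flat balls satisfy a Carleson packing condition: using lower Ahlfors-regularity of $E$ together with the fact that a neighbourhood of a horizontal hyperplane carries only $\lesssim(\text{small})\cdot s^{2k+1}$ amount of $\calHk$-mass of $E$ (a Frostman / box-counting estimate exploiting that the horizontal hyperplane has metric dimension $2k+2$ but only a lower-dimensional ``horizontal slice'' of size $\sim s^{2k}$), a standard stopping-time / maximal-function argument bounds $\int_0^R\calHk(\{q:\text{ball }B(q,s)\text{ horizontally flat}\})\,\tfrac{ds}{s}\lesssim_\epsilon R^{2k+1}$. This is essentially the argument used by Naor and Young; combining the two cases yields~\eqref{e:def-bwgl-vertical-hyperplanes} and completes the proof. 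Since the paper only wants an outline here, I would state the two key lemmas (the tilt-vs-vertical-approximation estimate, and the Carleson estimate for horizontally flat balls) and refer to~\cite{NY} for the details of the latter.
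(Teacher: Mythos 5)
Your overall architecture is the right one and matches the paper's: the converse is trivial, and the forward implication is proved by discarding the scales and locations where $b\beta_E$ is already large (a Carleson family by hypothesis) and then showing that the residual ``bad'' pairs, where $E$ is flat against some hyperplane but not against any vertical one, also form a Carleson family. However, the geometric lemma you place at the heart of the argument is not correct. The quantity governing whether a non-vertical hyperplane $P=p\cdot H$ is close to a vertical hyperplane inside $B(q,s)$ is \emph{not} the Euclidean angle between $P$ and $H$; it is the horizontal displacement $\alpha_q(P)=d_{\mathrm{Euc}}(\pi(q),\pi(p))$ of the ball's centre from the centre $p$ of $P$ --- equivalently, the tilt of the left-translate $q^{-1}\cdot P$, and left translations are shears of $\Rk\times\R$ that do not preserve Euclidean angles. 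Thus the completely flat plane $\Rk\times\{c\}$ is within $\lesssim_k s^{2}/|\pi(q)|$ of a vertical hyperplane inside $B(q,s)$ once $|\pi(q)|\gg s$, while a plane $p\cdot H$ of arbitrarily large tilt looks exactly like $H$ inside $B(p,s)$ and is at distance $\sim_k s$ from every vertical hyperplane there. This is the content of \cite[Corollary~73]{NY} and of \eqref{e:P-V}, and the paper's bad set is accordingly $\{Q:\alpha(Q)<1/\epsilon\}$, not a set of ``small-tilt'' cubes.

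The Carleson estimate you propose for the bad set rests on a false premise. A horizontal hyperplane has Hausdorff dimension $2k+1$ (not $2k+2$) and is Ahlfors $(2k+1)$-regular --- this is exactly what Example~\ref{ex:H} and \eqref{e:lower-ar-hyperplanes}, which you cite in support of the opposite claim, establish --- so a neighbourhood of $p\cdot H$ can carry the full mass $\sim_k s^{2k+1}$, and lower regularity of $E$ in no way prevents $E$ from being trapped in such a neighbourhood at every scale: $E=H$ is itself a closed Ahlfors-regular set to which the theorem applies, is $0$-close to a horizontal hyperplane at every scale and location, and nonetheless satisfies BWGL for vertical hyperplanes. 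The correct mechanism is that the bad cubes, once the approximating hyperplanes at nested scales are shown to be coherent (the tree / good-descendant structure of \cite[Lemma~76]{NY}, Lemma~\ref{Lemma75NY} here), all lie in a tube of horizontal radius $\sim\ell(Q)/\epsilon$ around a single vertical line, namely the fibre of $\pi$ over the common centre of the horizontal planes; the $\calHk$-mass of an Ahlfors-regular set in such a tube at scale $\ell(Q)=2^{j}$ inside $B(Q_0)$ is $\lesssim_{\epsilon}\ell(Q_0)^{2}\,2^{j(2k-1)}$, which sums geometrically over $j$ to $\lesssim_{\epsilon}\ell(Q_0)^{2k+1}$. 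Without the coherence lemma the centres of the near-optimal planes could wander from scale to scale and no packing bound follows, so this step cannot be omitted from the outline.
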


Going back to Semmes surfaces, we know from Proposition~\ref{prop:lowerAR} that they are lower Ahlfors-regular, and hence Ahlfors-regular, and from Proposition~\ref{prop:bwgl-arbitrary-hyperplanes} that they satisfy BWGL for abitrary hyperplanes. Hence the validity of BWGL for vertical hyperplanes follows from Theorem~\ref{thm:bwgl}.

\begin{proposition} [BWGL for vertical hyperplanes for Semmes surfaces] \label{prop:bwgl-vertical-hyperplanes}
Let $S \subset \Hek$ be a Semmes surface. Then $S$ satisfies BWGL for vertical hyperplanes. Moreover, the implicit constant in~\eqref{e:def-bwgl-vertical-hyperplanes} can be chosen depending only on $\epsilon,k$ and on the upper Ahlfors-regularity and Condition~\textbf{B} constants of $S$.
\end{proposition}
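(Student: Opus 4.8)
The plan is to combine the results already established in this section. By Corollary~\ref{cor:lower-ar}, a Semmes surface $S$ is lower Ahlfors-regular, and it is upper Ahlfors-regular by definition, so $S$ is a closed Ahlfors-regular subset of $\Hek$. By Proposition~\ref{prop:bwgl-arbitrary-hyperplanes}, $S$ satisfies BWGL for arbitrary hyperplanes. Hence the hypotheses of Theorem~\ref{thm:bwgl} are met, and the ``only if'' direction (which is the nontrivial implication, upgrading arbitrary-hyperplane approximation to vertical-hyperplane approximation) yields that $S$ satisfies BWGL for vertical hyperplanes. This proves the qualitative part of the statement.

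It remains to track the dependence of constants. First I would note that the Ahlfors-regularity constants of $S$ depend only on $k$ and on the upper Ahlfors-regularity and Condition~\textbf{B} constants of $S$: the upper one is given, and the lower one is controlled as stated in Corollary~\ref{cor:lower-ar}. Next, by the quantitative part of Proposition~\ref{prop:bwgl-arbitrary-hyperplanes}, the implicit constant in the BWGL inequality for arbitrary hyperplanes depends only on $\epsilon$, $k$, and the upper Ahlfors-regularity and Condition~\textbf{B} constants of $S$. Finally, the proof of Theorem~\ref{thm:bwgl} produces, for each threshold $\epsilon>0$, a new threshold $\epsilon'=\epsilon'(\epsilon,k,\text{Ahlfors-regularity constants})$ and an implicit constant depending only on $\epsilon$, $k$, and the Ahlfors-regularity constants, such that a bound of the form~\eqref{e:def-bwgl-arbitrary-hyperplanes} at level $\epsilon'$ implies~\eqref{e:def-bwgl-vertical-hyperplanes} at level $\epsilon$. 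Chaining these dependencies, the implicit constant in~\eqref{e:def-bwgl-vertical-hyperplanes} for $S$ depends only on $\epsilon$, $k$, and the upper Ahlfors-regularity and Condition~\textbf{B} constants of $S$, as claimed.

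The main (indeed only) obstacle is the proof of Theorem~\ref{thm:bwgl}, which is deferred to Section~\ref{sect:bwgl-vertical-hyperplanes} and relies on the reduction in~\cite{NY} from~\cite[Proposition~75]{NY} to~\cite[Proposition~68]{NY}; but since we are allowed to invoke Theorem~\ref{thm:bwgl}, the present proof is a short bookkeeping argument assembling Corollary~\ref{cor:lower-ar}, Proposition~\ref{prop:bwgl-arbitrary-hyperplanes}, and Theorem~\ref{thm:bwgl}.

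\begin{proof}[Proof of Proposition~\ref{prop:bwgl-vertical-hyperplanes}]
Let $S\subset\Hek$ be a Semmes surface. By definition $S$ is closed and upper Ahlfors-regular, and by Corollary~\ref{cor:lower-ar} it is lower Ahlfors-regular with a constant depending only on $k$ and on the upper Ahlfors-regularity and Condition~\textbf{B} constants of $S$. Hence $S$ is a closed Ahlfors-regular set, with Ahlfors-regularity constants depending only on $k$ and on the upper Ahlfors-regularity and Condition~\textbf{B} constants of $S$. By Proposition~\ref{prop:bwgl-arbitrary-hyperplanes}, $S$ satisfies BWGL for arbitrary hyperplanes, with the implicit constant in~\eqref{e:def-bwgl-arbitrary-hyperplanes} depending only on $\epsilon,k$ and on the upper Ahlfors-regularity and Condition~\textbf{B} constants of $S$. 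We may thus apply Theorem~\ref{thm:bwgl} to conclude that $S$ satisfies BWGL for vertical hyperplanes. The dependence of the implicit constant in~\eqref{e:def-bwgl-vertical-hyperplanes} on $\epsilon$, $k$, and the Ahlfors-regularity constants of $S$ comes from the proof of Theorem~\ref{thm:bwgl}; since these Ahlfors-regularity constants in turn depend only on $k$ and on the upper Ahlfors-regularity and Condition~\textbf{B} constants of $S$, the claimed dependence follows.
\end{proof}
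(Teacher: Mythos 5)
Your proof is correct and follows exactly the paper's route: the paper also deduces Proposition~\ref{prop:bwgl-vertical-hyperplanes} by combining Corollary~\ref{cor:lower-ar} (Ahlfors-regularity), Proposition~\ref{prop:bwgl-arbitrary-hyperplanes} (BWGL for arbitrary hyperplanes), and Theorem~\ref{thm:bwgl}. Your bookkeeping of the constant dependencies is also consistent with the paper's remarks.
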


We give now an outline of the proof of Theorem~\ref{thm:bwgl} which is based on arguments developed in~\cite[Section~9.4]{NY}. Recall that if a hyperplane $P\subset\Hek$ is not vertical, then it is horizontal and there is a unique $p\in \Hek$ such that $P=p\cdot H$ (see the comment before Proposition~\ref{prop:small-non-mono-imply-approx-half-space}). In other words, a horizontal hyperplane $P$ coincides with the set of all horizontal lines passing through some point $p\in \Hek$ which we call the centre of $P$. As observed by Naor and Young, inside a ball far from its center, a horizontal hyperplane is close to some vertical hyperplane, see~\cite[Corollary~73]{NY}. Hence, if a set $E$ is close to a horizontal hyperplane $P$ in a ball far from the center of $P$, then $E$ is close to some vertical hyperplane in a slightly smaller ball.

Before proceeding further, and mainly for technical convenience, we recall the notion of \emph{David cubes} on Ahlfors-regular sets. A system of David cubes on a closed Ahlfors-regular set $E$ is a collection $\calD$ of subsets of $E$ with the following properties. We set $\mathbb{J} := \mathbb{Z}$ if $E$ is unbounded, and $\mathbb{J}:= \{j\in\Z: j\leq n\}$  where $n\in\Z$ is such that $2^{n} \leq \diam E <2^{n+1}$ if $E$ is bounded. First, $\calD = \cup_{j\in\mathbb{J}} \calD_j$ where, for each $j\in \mathbb{J}$, the letter $\calD_j$ denotes a family of disjoint subsets of $E$ such that $\calHk (E\setminus \cup_{Q \in \calD_j} Q ) = 0$. Next, for every $j\in\mathbb{J}$ and $Q \in \calD_j$, we have $\diam Q \lesssim_{reg} 2^{j}$, and there is a ball $B_Q = B(c_{Q},c2^{j})$, centred at some point $c_{Q} \in Q$ and called the centre of $Q$, such that $B_Q\cap E \subset Q$. The constant $c > 0$ depends only on the Ahlfors-regularity constants of $E$. Finally, for $i, j \in \mathbb{J}$ with $i\leq j$, if $Q\in \calD_i$ and $Q'\in\calD_j$, then either $Q\cap Q' = \emptyset$ or $Q \subset Q'$. For $j\in \mathbb{J}$ and $Q \in \calD_j$, we set $\ell(Q) := 2^{j}$. One should think of $\ell(Q)$ as a substitute for the ``side-length'' of the cube $Q$. The existence of systems of David cubes on Ahlfors-regular sets in Euclidean spaces has been proven in~\cite{MR1009120}. The construction has been extended to spaces of homogeneous type in~\cite{Christ}, to which we refer for more details.

To prove the non-trivial implication in Theorem~\ref{thm:bwgl}, we let $E\subset\Hek$ be a closed Ahlfors-regular set satisfying BWGL for arbitrary hyperplanes and $\calD$ be a system of David cubes on $E$. We fix a constant $C_0\geq 2$, depending only on the Ahlfors-regularity constants for $E$, such that
\begin{equation}\label{form40} 2\diam Q < C_0 \ell(Q), \qquad Q\in \calD. \end{equation}
For $Q \in \calD$, we set $b\beta_E(Q) := C_0 \, b\beta_E(c_Q,C_0\ell(Q))$, that is,
\begin{displaymath}
b\beta_E(Q) = \inf_{P} \left\{\sup_{p \in B(Q) \cap E} \frac{\dist(p,P)}{\ell(Q)} + \sup_{p \in B(Q) \cap P} \frac{\dist(p,E)}{\ell(Q)} \right\}
\end{displaymath}
where the infimum runs over all hyperplanes $P \subset \Hek$ and $B(Q):=B(c_{Q},C_{0}\ell(Q))$. Similarly, we set $b\beta_{v,E}(Q) := C_0 \, b\beta_{v,E} (c_Q,C_0\ell(Q))$, that is,
\begin{displaymath}
b\beta_{v,E}(Q) =  \inf_{\W} \left\{\sup_{p \in B(Q) \cap E} \frac{\dist(p,\W)}{\ell(Q)} + \sup_{p \in B(Q) \cap \W} \frac{\dist(p,E)}{\ell(Q)} \right\}
\end{displaymath}
where the infimum runs over all vertical hyperplanes $\W\subset\Hek$.

As a classical fact, it is easy to reformulate the BWGL conditions in terms of the numbers $b\beta_E(Q)$ and $b\beta_{v,E}(Q)$. Namely, a closed Ahlfors-regular set $E\subset\Hek$ satisfies BWGL for arbitrary, respectively vertical, hyperplanes if and only if, for every $\epsilon>0$ and $Q_0\in\calD$,
\begin{equation}  \label{e:bwgl-arbitrary-cubes}
\mathop{\sum_{Q \in \calD(Q_{0})}}_{b\beta_{E}(Q) > \epsilon} \ell(Q)^{2k+1} \lesssim_{\epsilon} \ell(Q_{0})^{2k+1},
\end{equation}
respectively
\begin{equation} \label{e:bwgl-vertical-cubes}
\mathop{\sum_{Q \in \calD(Q_{0})}}_{b\beta_{v,E}(Q) > \epsilon} \ell(Q)^{2k+1} \lesssim_{\epsilon} \ell(Q_{0})^{2k+1}.
\end{equation}
Here $\calD(Q_{0}):=\{Q\in\calD: Q\subset Q_0\}$.

\begin{remark} \label{rmk:bwgl} The precise value of the constant $C_{0}$ in the definition of $b\beta_E(Q)$ and $b\beta_{v,E}(Q)$ does not matter here. More precisely, the validity of~\eqref{e:bwgl-arbitrary-cubes}, respectively~\eqref{e:bwgl-vertical-cubes}, for a system of David cubes $\calD$ on $E$ and a choice of $C_0>0$ such that $\diam Q < C_0 \ell(Q)$ for $Q\in \calD$ implies the validity of BWGL for arbitrary, respectively vertical, hyperplanes, which is in turn equivalent to~\eqref{e:bwgl-arbitrary-cubes}, repectively~\eqref{e:bwgl-vertical-cubes}, for every system of David cubes $\calD$ on $E$ and every choice of $C_0>0$ such that $\diam Q < C_0 \ell(Q)$ for $Q\in \calD$. The implicit constants in~\eqref{e:bwgl-arbitrary-cubes} and~\eqref{e:bwgl-vertical-cubes} will then naturally depend on $C_0$. Our choice of $C_0$, recall \eqref{form40}, lies in this admissible range and is also suitable for our proof of Theorem~\ref{thm:bwgl}
(see in particular the proof of Lemma \ref{Lemma75NY}).
\end{remark}

Let $E$ and $\calD$ be as above. Fix $\epsilon > 0$ and a cube $Q_{0} \in \calD$, and let $\eta > 0$ be another parameter, to be fixed later small enough depending only on $\epsilon$, $k$, and on the Ahlfors-regularity constants for $E$. By Remark~\ref{rmk:bwgl}, the family of cubes
\begin{displaymath}
\calB_{0}(\eta) := \{Q \in \calD(Q_{0}) : b\beta_E(2Q) \geq \eta\}
\end{displaymath}
satisfies the Carleson packing estimate
\begin{displaymath}
\sum_{Q \in \calB_{0}(\eta)} \ell(Q)^{2k+1} \lesssim_{\eta} \ell(Q_{0})^{2k+1},
\end{displaymath}
where $b\beta_E(2Q):=2C_0 \, b\beta_E(c_Q,2C_0\ell(Q))$. We set $\calG_{0}(\eta) := \calD(Q_{0}) \setminus \calB_{0}(\eta)$ and we are going to define a subset $\calB_{1}(\epsilon)$ of $\calG_{0}(\eta)$ in such a way that, for all $Q \in \calG_{0}(\eta) \setminus \calB_{1}(\epsilon)$,
\begin{equation} \label{e:good-cubes}
b\beta_{v,E}(Q) \lesssim_{reg} \epsilon,
\end{equation}
and $\calB_{1}(\epsilon)$ will satisfy the Carleson packing estimate
\begin{equation}\label{e:bad-cubes}
\sum_{Q \in \calB_{1}(\epsilon)} \ell(Q)^{3} \lesssim_{\epsilon} \ell(Q_{0})^{3},
\end{equation}
provided $\eta$ is chosen small enough, depending only on $\epsilon$, $k$, and on the Ahlfors-regularity constants of $E$. This will complete the proof of Theorem~\ref{thm:bwgl}.

As in~\cite{NY}, given $p\in\Hek$ and a hyperplane $P\subset \Hek$, we set
\begin{displaymath}
\alpha_{p}(P) :=
\begin{cases}
d_{\mathrm{Euc}}(\pi(p),\pi(q)) & \text{if } P = q \cdot H \text{ is horizontal}, \\
+\infty & \text{if } P \text{ is vertical.}
\end{cases}
\end{displaymath}
Here $\pi \colon \Hek \to \Rk$ stands for the projection $\pi(v,t) := v$ which is a $1$-Lipschitz mapping between $(\Hek,d)$ and $(\Rk,d_{\mathrm{Euc}})$. By definition, $\alpha_p(P)$ is a number in $[0,+\infty]$ and $\alpha_p(P) = +\infty$ if and only if the hyperplane $P$ is vertical.  On the other hand, when $\alpha_p(P)$ is finite, then the hyperplane $P$ is horizontal and one has
\begin{equation*}
\angle (\mathbb{T}, p^{-1}\cdot P ) = \arctan (2/\alpha_p (P))
\end{equation*}
where $\angle(\mathbb{T}, p^{-1}\cdot P )$ denotes the minimum Euclidean angle between the vertical axis $\mathbb{T}:=\{(0,t) \in \Hek : t\in \R\}$ and a line in $p^{-1}\cdot P$ that passes through the point where $\mathbb{T}$ intersects $p^{-1}\cdot P$. Here we interpret "$\arctan (2/0)$" as "$\pi / 2$". This can be deduced from the definition of $\alpha_p(P)$ together with elementary computations. It follows that large values of $\alpha_p (P)$ imply that, inside every ball centred at $p$, the hyperplane $P$ is quantitatively close to some vertical hyperplane, see~\cite[Corollary~73]{NY}. In particular, we get from~\cite[Corollary~73]{NY} combined with~\cite[Lemma~52(3)]{NY} that, for every $Q\in\mathcal{D}$, every $q\in Q$, and every hyperplane $P$, there is a vertical hyperplane $V$ such that
\begin{equation}  \label{e:P-V}
\sup_{p \in 2B(Q) \cap P} \dist(p,V) + \sup_{p \in 2B(Q) \cap V} \dist(p,P) \lesssim_{reg}  \ell(Q)^2 / \alpha_q(P)
\end{equation}
where $2B(Q):=B(c_Q,2C_0\ell(Q))$.

We go back now to $\calG_{0}(\eta)$. We associate to every $Q \in \calG_{0}(\eta)$ a hyperplane $P_{Q}$ with
\begin{equation} \label{e:bwgl}
\sup_{p \in 2B(Q) \cap E} \dist(p,P_{Q}) + \sup_{p \in 2B(Q) \cap P_Q} \dist(p,E) \leq \eta \ell(Q).
\end{equation}
We set
\begin{equation*}
\alpha(Q) := \inf_{p \in Q} \frac{\alpha_{p}(P_Q)}{\ell(Q)}
\end{equation*}
and
\begin{displaymath}
\calB_{1}\left(\epsilon
\right) := \{Q \in \calG_{0}(\eta) : \alpha(Q) < 1/\epsilon\}.
\end{displaymath}

It follows from~\eqref{e:P-V} and~\eqref{e:bwgl} that every $Q \in \calG_{0}(\eta) \setminus \calB_{1}(\epsilon)$ satisfies~\eqref{e:good-cubes}, provided $\eta$ is chosen small enough, depending only on $\epsilon$, $k$, and on the Ahlfors-regularity constants of $E$. To conclude the proof, it thus remains to prove that the Carleson packing estimate~\eqref{e:bad-cubes} holds, provided $\eta$ is chosen even smaller if necessary, and depending only on $\epsilon$, $k$, and on the Ahlfors-regularity constants of $E$.

The proof of~\eqref{e:bad-cubes} runs essentially in the same way as the end of the proof of~\cite[Proposition~75]{NY}, but we rephrase a few details. They are virtually the same as in~\cite{NY}, but our set-up is quite different from that in~\cite[Proposition~75]{NY} (see Section~\ref{subsect:NY-vs-our-setting} for further comments). Also, our notion of ``close to a hyperplane'' is somewhat different from the notion of ``close to a half-space'' employed in~\cite{NY}.

Following~\cite{NY}, we say that a cube $R \in \calG_{0}(\eta)$ is a \emph{good descendant} of $Q \in \calG_{0}(\eta)$, if all the cubes $Q' \in \calD$ with $R \subset Q' \subset Q$ lie in $\calG_{0}(\eta)$. For $Q \in \calG_{0}(\eta)$, we denote by $G(Q)$ the good descendants of $Q$, and we write
\begin{displaymath}
G(Q,\epsilon):= \{R \in G(Q) : \alpha(R) < 1/\epsilon\}.
\end{displaymath}
The next lemma is an analogue of \cite[Lemma~76]{NY}.
\begin{lemma}\label{Lemma75NY} Let $0<\epsilon <1$. If $\eta > 0$  is sufficiently small, depending only on $\epsilon$, $k$, and on the Ahlfors-regularity constants for $E$, then $G(Q,\epsilon)$ is a tree for all $Q \in \calG_{0}(\eta)$. More precisely, whenever $R \in G(Q,\epsilon)$ and $Q' \in \calD$ with $R \subset Q' \subset Q$, then $Q' \in G(Q,\epsilon)$. Further, the cubes in any fixed tree $G(Q,\epsilon)$, $Q \in \calG_{0}(\eta)$, satisfy a Carleson packing condition,
\begin{equation*}
\sum_{R \in G(Q,\epsilon)} \ell(R)^{2k+1} \lesssim_{\epsilon} \ell(Q)^{2k+1}.
\end{equation*}
\end{lemma}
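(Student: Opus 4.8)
The plan is to follow the structure of the argument in \cite[Lemma~76]{NY}, separating the tree property from the Carleson estimate. For the tree property, I would argue by contradiction. Suppose $R \in G(Q,\epsilon)$ but some intermediate cube $Q'$ with $R \subset Q' \subset Q$ satisfies $\alpha(Q') \geq 1/\epsilon$, i.e. $Q' \in \calG_0(\eta) \setminus \calB_1(\epsilon)$. By \eqref{e:good-cubes}, $Q'$ is then well-approximated by a \emph{vertical} hyperplane $\W$ inside $B(Q')$: concretely $b\beta_{v,E}(Q') \lesssim_{reg} \epsilon$. On the other hand, $R \in G(Q,\epsilon)$ means $\alpha(R) < 1/\epsilon$, and since $R \in \calG_0(\eta)$ the cube $R$ is well-approximated by the \emph{horizontal} hyperplane $P_R$ with $\alpha_p(P_R) \lesssim \epsilon \,\ell(R)$ for some $p \in R$; via \eqref{e:P-V} the angle between $P_R$ and the vertical direction is bounded below. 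The contradiction comes from comparing these two approximations of $E$ on the nested balls $B(R) \subset 2B(Q') \subset B(Q')$: on the ball $B(R)$, the set $E$ is simultaneously $\eta\ell(R)$-close to a horizontal hyperplane of definite (non-vertical) tilt and $\lesssim_{reg}\epsilon\,\ell(Q')$-close to a vertical hyperplane. Because $B(R)$ is still a ball of radius $\sim \ell(R)$ and the two hyperplanes differ in direction by an angle bounded away from $0$, two hyperplanes that close to $E$ on $B(R)$ would have to be close to each other on $B(R)$, forcing $\ell(R) \lesssim_{reg} \epsilon\,\ell(Q')/\text{(angle)} \lesssim_{reg} \epsilon\,\ell(Q')$; but also one needs the reverse type of bound, and the point is that choosing $\eta$ small enough (depending only on $\epsilon$, $k$, and the Ahlfors-regularity constants) makes $\eta\ell(R)$ too small to be compatible with both approximations unless $Q'$ was in fact in $\calB_1(\epsilon)$ — contradiction. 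This is essentially the computation behind \cite[Lemma~76]{NY}, adapted to our ``close to a hyperplane'' formulation rather than ``close to a half-space''; the choice of $C_0$ in \eqref{form40} is what makes the nested balls line up.

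For the Carleson packing estimate on a fixed tree $G(Q,\epsilon)$, the plan is to exploit that $\alpha(R) < 1/\epsilon$ for every $R \in G(Q,\epsilon)$ and that, for cubes in $\calG_0(\eta)$, the approximating hyperplanes $P_R$ are genuine near-tangent planes of $E$. The key geometric input is the $1$-Lipschitz projection $\pi \colon \Hek \to \Rk$ and the fact that $\alpha_p(P_R) = d_{\mathrm{Euc}}(\pi(p),\pi(c_{P_R}))$ measures how far $\pi(R)$ sits from the image of the centre of the horizontal plane $P_R$. The condition $\alpha(R) < 1/\epsilon$ says this Euclidean distance is $\lesssim_\epsilon \ell(R)$; combined with \eqref{e:bwgl} this pins the ``centre'' point $c_{P_R}$ into a bounded $\Rk$-region around $\pi(R)$, and — following \cite[Proof of Proposition~75]{NY} — one shows that as $R$ ranges over $G(Q,\epsilon)$ the corresponding data cannot accumulate too much, yielding $\sum_{R \in G(Q,\epsilon)} \ell(R)^{2k+1} \lesssim_\epsilon \ell(Q)^{2k+1}$. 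The mechanism is a stopping-time / bounded-overlap argument: for each dyadic generation inside $G(Q,\epsilon)$, the cubes $R$ with $\ell(R) \sim 2^j$ have $\pi(R)$ contained in a region of $\Rk$-diameter $\lesssim_\epsilon \ell(Q)$ and, being disjoint subsets of $E$ of comparable size with a Lipschitz projection, their number is $\lesssim_\epsilon (\ell(Q)/\ell(R))^{2k+1}$; summing a geometric series in the remaining direction gives the claim. (Here one uses Ahlfors-regularity of $E$ to control how $\calHk$-mass and hence cube-count transfers under $\pi$.)

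The main obstacle I expect is the tree-property step, specifically making the ``two hyperplanes close to $E$ on $B(R)$ must be close to each other, hence have comparable directions'' comparison fully quantitative in the Heisenberg metric, since the relevant distances ($\dist(\cdot,P)$, angles) are a mix of Euclidean and Korányi quantities and the balls $B(R)$, $B(Q')$ are Korányi balls. One has to be careful that the approximation constant $\eta$ can be chosen \emph{uniformly} over the whole tree — i.e. depending only on $\epsilon$, $k$, and the regularity constants and not on the generation of $R$ — which is exactly why we pass to the bilateral $\beta$-numbers at scale $2B(Q)$ in \eqref{e:bwgl} and why we set $b\beta_E(2Q) \geq \eta$ rather than $b\beta_E(Q) \geq \eta$ in the definition of $\calB_0(\eta)$; this buffering is what lets the nested-ball geometry close. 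The Carleson step, by contrast, is bookkeeping that largely transcribes \cite{NY} once the correspondence $R \mapsto (P_R, \alpha(R))$ is in place.
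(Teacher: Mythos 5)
Your plan for the tree property takes a genuinely different route from the paper's, and as sketched it does not close. You try to derive a contradiction by playing the \emph{vertical} approximation of an intermediate cube $Q'$ (via \eqref{e:good-cubes}) against the \emph{horizontal, tilted} approximation of $R$, both restricted to $B(R)$. Two problems. First, you allow $Q'$ to be an arbitrary intermediate cube, so $\ell(Q')$ may be many generations above $\ell(R)$; the vertical approximation of $E$ at scale $\ell(Q')$ with error $\lesssim_{reg}\epsilon\,\ell(Q')$ then carries no information on the ball $B(R)$, and the inequality $\ell(R)\lesssim\epsilon\,\ell(Q')$ you extract is no contradiction at all. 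Second, even after reducing to the parent (so $\ell(Q')=2\ell(R)$), the contradiction does not close quantitatively: a horizontal hyperplane with $\alpha_{p}(P_R)<\ell(R)/\epsilon$ deviates from every vertical hyperplane on $B(R)$ by roughly $\ell(R)^{2}/\alpha_{p}(P_R)$, which can be as small as $\sim\epsilon\,\ell(R)$ --- exactly the same order as the error $\lesssim_{reg}\epsilon\,\ell(Q')$ in \eqref{e:good-cubes}. Shrinking $\eta$ does not help, because the term you need to beat is the $\epsilon$-term, not the $\eta$-term. The paper avoids this entirely: it first reduces to the parent $Q'$ of $R$, then compares the two approximating hyperplanes $P_R$ and $P_{Q'}$ directly (they are $3\eta\ell(R)$-close on $B(p,\ell(R))$, hence at small angle), and invokes the mechanism of \cite[Lemma~74]{NY} to propagate $\alpha_{p}(P_R)<\ell(R)/\epsilon$ to $\alpha_{p}(P_{Q'})<2\ell(R)/\epsilon=\ell(Q')/\epsilon$; the factor $2$ lost in the propagation is exactly absorbed by the doubling of $\ell$. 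No vertical hyperplane appears in this step. (Your inequality ``$\alpha_p(P_R)\lesssim\epsilon\,\ell(R)$'' also inverts the correct bound $\alpha_p(P_R)<\ell(R)/\epsilon$, and the chain $B(R)\subset 2B(Q')\subset B(Q')$ has its last inclusion reversed.)

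For the Carleson estimate your per-generation count is also wrong as stated: bounding the number of cubes with $\ell(R)\sim 2^{j}$ by $(\ell(Q)/\ell(R))^{2k+1}$ is the trivial count of \emph{all} cubes of that generation, and summing it over generations gives $(\#\text{generations})\cdot\ell(Q)^{2k+1}$, not $\lesssim_{\epsilon}\ell(Q)^{2k+1}$. The gain in \cite[Lemma~76]{NY} comes from the fact that the cubes of a fixed tree at scale $2^{j}$ are confined to a $\sim 2^{j}/\epsilon$-neighbourhood of a single fiber of $\pi$ (the centres of the planes $P_R$ all track the centre of $P_Q$), which cuts the count by a factor $\sim 2^{j}/(\epsilon\,\ell(Q))$ and makes the sum over $j$ a convergent geometric series. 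The paper defers this step to \cite{NY} verbatim, which is legitimate, but your sketch of the mechanism would need to be corrected before it could be transcribed.
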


\begin{proof} To prove the tree property, fix $Q \in \calG_{0}(\eta)$ and $R \in G(Q,\epsilon)$. It suffices to show that the parent of $R$, say $Q'$, lies in $G(Q,\epsilon)$. First, since $R \in G(Q)$ by definition, also $Q' \in G(Q)$. So, it remains to show that $\alpha(Q') < 1/\epsilon$. Since $\alpha(R) < 1/\epsilon$, we can find a point $p \in R$ with
\begin{equation*}%\label{form16}
\alpha_{p}(P_{R}) < \ell(R)/\epsilon.
\end{equation*}
Now, we would like to argue that
\begin{equation*}%\label{form17}
\alpha_{p}(P_{Q'}) < \ell(Q')/\epsilon
\end{equation*}
if $\eta > 0$ is sufficiently small. Since $p \in R \subset 2B(R)\cap E$, we have by the choice of $P_R$ that $\dist(p,P_R) \leq \eta \ell (R)$. By our choice of the constant $C_0$, we also have $B(p,\ell (R)) \subset B(R) \subset 2B(R) \subset 2B(Q')$. By the choice of $P_R$ and $P_{Q'}$, it follows that
\begin{equation*}
\sup_{q \in  B(p,\ell (R)) \cap P_R} \dist (q,P_{Q'}) + \sup_{q\in B(p,\ell (R)) \cap P_{Q'}} \dist (q,P_R) \leq 3\eta \ell(R),
\end{equation*}
at least provided $\eta$ is small enough, depending only on $C_0$, and hence only on the Ahlfors-regularity constants for $E$. This forces in particular the Euclidean angle between $P_{R}$ and $P_{Q'}$ to be small and, taking into account that $\alpha_p (P_R) < \ell (R)/ \epsilon$, it follows from the proof of~\cite[Lemma~74]{NY} that $\alpha_p(P_{Q'}) < 2 \ell (R) / \epsilon = \ell(Q') / \epsilon$, provided $\eta$ is chosen small enough, depending only on $\epsilon$, $k$, and on the Ahlfors-regularity constants for $E$. This proves that $\alpha(Q') < 1/\epsilon$, hence $Q'\in G(Q,\epsilon)$ and the tree property has been established.

The rest of the proof is now verbatim the same as in~\cite[Lemma~76]{NY}, noting that, as we just did in the previous argument, the $U$-local distance between the half-spaces $P_Q^+$ and $P_R^{+}$ considered in~\cite{NY} (see~\cite[Definition~51]{NY}) has to be replaced by quantities of the form
$$\sup_{p\in U \cap P_Q} \dist(p,P_R) + \sup_{p\in U\cap P_R} \dist(p,P_Q)$$
in our setting. We will thus not repeat further the details.
\end{proof}

With Lemma~\ref{Lemma75NY} in hand, the remainder of the proof of Theorem~\ref{thm:bwgl}, that is, the proof of~\eqref{e:bad-cubes}, is precisely the same as the end of the proof of~\cite[Proposition~75]{NY}, so we also omit the details. This concludes the proof of Theorem~\ref{thm:bwgl}.

%%%

\section{Semmes surfaces have big pieces of intrinsic Lipschitz graphs} \label{sect:BPiLG} We conclude in this section the proof of Theorem~\ref{main}. %Recall that we know from Proposition~\ref{BVPProp} and Corollary~\ref{cor:lower-ar} that Semmes surfaces have big vertical projections and are lower Ahlfors-regular, and hence Ahlfors-regular. Next,
We know from Proposition~\ref{prop:bwgl-vertical-hyperplanes} that Semmes surfaces satisfy the bilateral weak geometric lemma for vertical hyperplanes. As an immediate consequence, we get the validity of the weak geometric lemma for vertical hyperplanes of which we recall now the definition.

Given $E\subset\Hek$, $p\in E$, and $s>0$, we define the \textit{$\beta$-number $\beta_{v,E} (p,s)$ for vertical hyperplanes} in a similar way than the bilateral $\beta$-number except that we only take into account the distance from points in $E$ to vertical hyperplanes inside a given ball $B(p,s)$, that is, we set
\begin{equation*}
\beta_{v,E}(p,s) := \inf_{\W}\,  \sup_{q \in B(p,s) \cap E} \frac{\dist(q,\W)}{s}
\end{equation*}
where the infimum runs over all vertical hyperplanes $\W\subset\Hek$.

\begin{definition}[WGL for vertical hyperplanes] \label{def:wgl-vertical-hyperplanes}
We say that a set $E\subset\Hek$ satisfies the \textit{weak geometric lemma} (or WGL in short) \textit{for vertical hyperplanes} if
\begin{equation} \label{e:def-wgl-vertical-hyperplanes}
\int_0^R \calHk (\{q\in E \cap B(p,R): \beta_{v,E}(q,s) > \epsilon \}) \,  \frac{ds}{s} \lesssim_\epsilon R^{2k+1}
\end{equation}
for all $\epsilon>0$, $p\in E$, and $R>0$.
\end{definition}

Since $\beta_{v,E}(q,s) \leq b\beta_{v,E}(q,s)$ for every $q\in E$ and $s>0$, it is immediate that BWGL for vertical hyperplanes implies WGL for vertical hyperplanes and the following proposition is then a straightforward consequence of Proposition~\ref{prop:bwgl-vertical-hyperplanes}.

\begin{proposition} [WGL for vertical hyperplanes for Semmes surfaces] \label{prop:wgl-vertical-hyperplanes}
Let $S \subset \Hek$ be a Semmes surface. Then $S$ satisfies WGL for vertical hyperplanes. Moreover, the implicit multiplicative constant in the right-hand side of~\eqref{e:def-wgl-vertical-hyperplanes} can be chosen depending only on $\epsilon$, $k$, and on the upper Ahlfors-regularity and Condition~\textbf{B} constants for $S$.
\end{proposition}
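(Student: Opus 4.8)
The plan is to obtain Proposition~\ref{prop:wgl-vertical-hyperplanes} as an immediate corollary of Proposition~\ref{prop:bwgl-vertical-hyperplanes}, exploiting the fact that the one-sided $\beta$-number for vertical hyperplanes is dominated pointwise by the bilateral one. The first step is to record this domination: for every closed set $E\subset\Hek$, every $p\in E$, and every $s>0$, one has
\begin{equation*}
\beta_{v,E}(p,s)\;\leq\; b\beta_{v,E}(p,s).
\end{equation*}
This holds because both infima range over the same family of vertical hyperplanes $\W$, while the functional being minimised in the definition of $\beta_{v,E}(p,s)$, namely $\sup_{q\in B(p,s)\cap E}\dist(q,\W)/s$, is obtained from the one defining $b\beta_{v,E}(p,s)$ by discarding the nonnegative summand $\sup_{q\in B(p,s)\cap\W}\dist(q,E)/s$.

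Next I would use this to compare the defining integrals of WGL and BWGL. Fixing $\epsilon>0$, $p\in S$, and $R>0$, the pointwise inequality gives, for each $0<s<R$, the inclusion of superlevel sets
\begin{equation*}
\{q\in S\cap B(p,R):\beta_{v,S}(q,s)>\epsilon\}\;\subset\;\{q\in S\cap B(p,R):b\beta_{v,S}(q,s)>\epsilon\},
\end{equation*}
hence, applying $\calHk$ and integrating in $s$ against $ds/s$ over $(0,R)$,
\begin{equation*}
\int_0^R \calHk(\{q\in S\cap B(p,R):\beta_{v,S}(q,s)>\epsilon\})\,\frac{ds}{s}\;\leq\;\int_0^R \calHk(\{q\in S\cap B(p,R):b\beta_{v,S}(q,s)>\epsilon\})\,\frac{ds}{s}.
\end{equation*}
By Proposition~\ref{prop:bwgl-vertical-hyperplanes} the right-hand side is $\lesssim_\epsilon R^{2k+1}$ with an implicit constant depending only on $\epsilon$, $k$, and the upper Ahlfors-regularity and Condition~\textbf{B} constants of $S$; this is precisely~\eqref{e:def-wgl-vertical-hyperplanes}, with the asserted dependence of the constants.

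I do not expect any genuine obstacle here: the only routine point is the Borel measurability of the maps $s\mapsto\calHk(\{q\in S\cap B(p,R):\beta_{v,S}(q,s)>\epsilon\})$ entering~\eqref{e:def-wgl-vertical-hyperplanes}, which is established exactly as for the analogous maps in the BWGL statement and need not be repeated. Thus the proposition reduces to a single line once Proposition~\ref{prop:bwgl-vertical-hyperplanes} is in hand.
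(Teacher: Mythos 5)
Your proposal is correct and coincides with the paper's own argument: the paper likewise deduces Proposition~\ref{prop:wgl-vertical-hyperplanes} from Proposition~\ref{prop:bwgl-vertical-hyperplanes} via the pointwise inequality $\beta_{v,E}(q,s) \leq b\beta_{v,E}(q,s)$. Nothing is missing.
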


To conclude the proof of Theorem~\ref{main}, recall that we also know from Propositions~\ref{prop:lowerAR} and~\ref{BVPProp} that Semmes surfaces are lower Ahlfors-regular, and hence Ahlfors-regular, and have big vertical projections. We then use the fact that a closed Ahlfors-regular set with BVP and satisfying WGL for vertical hyperplanes in $\Hek$ has big pieces of intrinsic Lipschitz graphs. In $\He^1$, this result is due to the first two authors with Chousionis~\cite[Theorem~3.7]{CFO}. The extension of this result to all Heisenberg groups is given in Section~\ref{sect:WGL+BVP-BPiLG-H2k+1}, see Theorem~\ref{thm:BVP+WGL-BPiLG}.

%%%%%%%%%%%%%%%%

\section{BVP + WGL for vertical hyperplanes imply BPiLG} \label{sect:WGL+BVP-BPiLG-H2k+1}

It has been proven in~\cite[Theorem~3.7]{CFO} that a closed Ahlfors-regular set with big vertical projections (BVP, Definition~\ref{def:BVP}) and satisfying the weak geometric lemma (WGL) for vertical hyperplanes (Definition~\ref{def:wgl-vertical-hyperplanes}) in $\He^1$ has big pieces of intrinsic Lipschitz graphs (BPiLG, Definition~\ref{def:BPiLG}). We extend in this section the proof given in~\cite{CFO} to all Heisenberg groups.

\begin{thm} \label{thm:BVP+WGL-BPiLG}
Let $k\geq 1$ and $E\subset \Hek$ be a closed Ahlfors-regular set. Assume that $E$ has BVP and satisfies WGL for vertical hyperplanes. Then $E$ has BPiLG.
\end{thm}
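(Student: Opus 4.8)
The plan is to follow the $\He^1$ argument of Chousionis and the first two authors \cite[Theorem~3.7]{CFO} and to verify that every step is insensitive to the dimension $k$. The ingredients it rests on are all available in $\Hek$ for arbitrary $k$: a Christ--David system of cubes $\calD$ on the Ahlfors-regular set $E$ \cite{Christ}; the Crofton-type bound~\eqref{crofton2}; the fact, recalled in the proof of Corollary~\ref{cor:lower-ar}, that $\calHk(\pi_{\W_\nu}(A)) \lesssim_k \calHk(A)$ for every $\nu \in \Sk$ and $A \subset \Hek$ \cite{MR3511465}; and the strict convexity~\eqref{eq:useful-H2k+1} of the Kor\'anyi norm. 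By left-invariance, scaling, and the cube construction it is enough to fix $Q_0 \in \calD$ and produce an intrinsic $L$-Lipschitz graph $\Gamma$ with $\calHk(Q_0 \cap E \cap \Gamma) \gtrsim_{reg} \ell(Q_0)^{2k+1}$, $L$ and the implicit constant depending only on $k$ and on the Ahlfors-regularity, BVP and WGL constants of $E$. Moreover, by the displayed inequality $\calHk(\pi_{\W_\nu}(A)) \lesssim_k \calHk(A)$, it suffices to find a subset $G \subset Q_0 \cap E$ which satisfies the intrinsic Lipschitz cone condition $(g \cdot C_{1/L}(\nu)) \cap G = \{g\}$ for all $g \in G$, for some fixed $\nu \in \Sk$, and whose shadow $\pi_{\W_\nu}(G)$ has $\calHk$-measure $\gtrsim_{reg} \ell(Q_0)^{2k+1}$: such a $G$ then has $\calHk(G) \gtrsim_k \calHk(\pi_{\W_\nu}(G)) \gtrsim_{reg} \ell(Q_0)^{2k+1}$, and by the extension property recorded after Definition~\ref{d:intrLip-H2k+1} it is contained in a genuine intrinsic $L'$-Lipschitz graph $\Gamma$ over $\W_\nu$.

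Next I would fix the direction and run a stopping-time (corona) decomposition. Applying BVP to a ball $B(c_{Q_0}, c\,\ell(Q_0))$ furnished by the cube construction, and using that $B(c_{Q_0}, c\,\ell(Q_0)) \cap E \subset Q_0 \subset E$, yields $\nu \in \Sk$ with $\calHk(\pi_{\W_\nu}(Q_0)) \geq c_0\, \ell(Q_0)^{2k+1}$; write $\pi := \pi_{\W_\nu}$ and $\W := \W_\nu$. Fix parameters $0 < \epsilon \ll \theta \ll 1$, to be chosen at the very end. Call $Q \in \calD(Q_0)$ \emph{flat} if there is a vertical hyperplane $\W_Q = w_Q \cdot \W_{\nu_Q}$ with $\dist(q,\W_Q) \leq \epsilon\,\ell(Q)$ for all $q \in E \cap B(c_Q, C_0\ell(Q))$, where $C_0$ is a fixed constant as in~\eqref{form40}; by the cube reformulation of Definition~\ref{def:wgl-vertical-hyperplanes}, the non-flat cubes $\calB$ satisfy $\sum_{Q \in \calB} \ell(Q)^{2k+1} \lesssim_{\epsilon} \ell(Q_0)^{2k+1}$. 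Starting from $Q_0$, descend through flat cubes whose approximating direction stays within $\theta$ of $\nu$, and let $\calS$ be the maximal cubes at which this fails, i.e. for which either $Q \in \calB$ or $|\nu_Q - \nu| > \theta$; write $\calS = \calS_{\mathrm{flat}} \cup \calS_{\mathrm{drift}}$ accordingly, and set $\calT := \calD(Q_0) \setminus \bigcup_{Q \in \calS} \calD(Q)$, the good tree, each of whose cubes is flat with $|\nu_Q - \nu| \leq \theta$.

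The two things to establish are: (i) the cubes of $\calS$ carry only a small part of the shadow, $\calHk\big(\pi\big(\bigcup_{Q \in \calS} Q\big)\big) \leq \tfrac12 c_0\, \ell(Q_0)^{2k+1}$; and (ii) a cone estimate for the good-tree points. For (ii): if $g, g' \in Q_0 \cap E$ lie in the good tree (no cube of $\calD(Q_0)$ containing them is stopped), put $s := d(g,g')$ and pick $Q \in \calD$ with $g \in Q$, $\ell(Q) \sim s$; then $Q \in \calT$ and $C_0$ large forces $g, g' \in B(c_Q, C_0\ell(Q))$, so both points lie within $O(\epsilon s)$ of the vertical plane $\W_Q$ with direction $\nu_Q$ satisfying $|\nu_Q - \nu| \leq \theta$. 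The coordinate formulas~\eqref{e:pwnu}--\eqref{e:plnu} then give $\|\pi_{\LL_\nu}(g^{-1} g')\| \lesssim (\epsilon + \theta)\,s$, and since $\|g^{-1}g'\| = s$ the strict convexity~\eqref{eq:useful-H2k+1} forces $\|\pi_{\W_\nu}(g^{-1}g')\| \gtrsim_k s$; hence $g^{-1}g' \notin C_{1/L}(\nu)$ provided $L^{-1}$ is taken comparable to $\epsilon + \theta$, so the set $G$ of good-tree points satisfies the cone condition (in particular $\pi|_G$ is injective). For $\calS_{\mathrm{drift}}$ in (i) one first shows the direction is stable along $\calT$: if $Q \in \calT$ has parent $\hat Q \in \calT$, both $\W_Q$ and $\W_{\hat Q}$ approximate $E \cap B(c_Q, C_0\ell(Q))$ to within $O(\epsilon\ell(Q))$, and by lower Ahlfors-regularity this set has $\calHk$-measure $\gtrsim_{reg} \ell(Q)^{2k+1}$, so it cannot be confined to the intersection of two thin vertical slabs with far-apart directions; this controls the increment $|\nu_Q - \nu_{\hat Q}|$ and lets one dominate $\calS_{\mathrm{drift}}$ by $\calB$, whence $\sum_{Q \in \calS} \ell(Q)^{2k+1} \lesssim_{\epsilon,\theta} \ell(Q_0)^{2k+1}$.

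The main obstacle is estimate (i): the crude bound $\calHk(\pi(\bigcup_{\calS} Q)) \leq \sum_{\calS} \calHk(\pi(Q)) \lesssim_k \sum_{\calS} \ell(Q)^{2k+1}$ is useless, since the Carleson constant above degenerates as $\epsilon,\theta \to 0$ whereas $c_0$ is a fixed BVP constant. As in \cite{CFO}, this is circumvented by invoking the Crofton bound~\eqref{crofton2} restricted to horizontal lines in the direction $\nu$, that is $\int_{\W} \card(\ell_y \cap Q_0 \cap E)\, d\calHk(y) \lesssim_k \calHk(Q_0 \cap E) \lesssim_{reg} \ell(Q_0)^{2k+1}$, where $\ell_y := \pi^{-1}\{y\} = y \cdot \LL_\nu$: this controls, on a set of $y$ of $\calHk$-measure $\geq \tfrac12 c_0\,\ell(Q_0)^{2k+1}$, the number of sheets of $E$ lying over $y$ and hence the contribution of the stopping cubes to the shadow near $y$, and a Chebyshev/good-$\lambda$ argument then isolates $Y \subset \pi(Q_0 \cap E)$ with $\calHk(Y) \geq \tfrac12 c_0\,\ell(Q_0)^{2k+1}$ over which the good-tree sheet is well defined. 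Taking $G := \{\text{good-tree points}\} \cap \pi^{-1}(Y)$, estimates (i)--(ii) yield a set $G$ with the cone condition and $\calHk(\pi(G)) \gtrsim_{reg} \ell(Q_0)^{2k+1}$, completing the construction of $\Gamma$. Since all the ingredients used — Christ--David cubes, the Crofton bound~\eqref{crofton2}, the projection estimate of \cite{MR3511465}, and~\eqref{eq:useful-H2k+1} — are valid in $\Hek$ for every $k$, the $\He^1$ argument of \cite{CFO} carries over with only notational changes, which is what the detailed proof in this section would spell out.
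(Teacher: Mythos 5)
Your overall plan (reduce to a set $G$ satisfying the cone condition with $\calHk(\pi_{\W_\nu}(G))\gtrsim_{reg}\ell(Q_0)^{2k+1}$, via a stopping-time on David cubes) is the right shape, but the step you yourself flag as "the main obstacle" — estimate (i), that the stopped cubes carry only a small part of the shadow — is not closed by the argument you offer, and as set up it is false. The Crofton bound~\eqref{crofton2} controls the \emph{multiplicity} of the fibres $\ell_y\cap Q_0\cap E$; it says nothing about whether every point of a given fibre lies inside some stopping cube, which is exactly what decides whether $y\in\pi_{\W_\nu}(G)$. Concretely, a set that is flat in a fixed near-$\nu$ direction at all scales above $\delta\ell(Q_0)$ and has one generation of non-flat cubes at scale $\delta\ell(Q_0)$ covering all of $Q_0$ is consistent with Ahlfors-regularity, BVP and WGL (the Carleson sum over $\calB$ is a single generation), yet your stopping time terminates everywhere at scale $\delta\ell(Q_0)$, so $\bigcup_{Q\in\calS}Q=Q_0$ and $G=\emptyset$, while the fibres have multiplicity $O(1)$. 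The same problem afflicts $\calS_{\mathrm{drift}}$: the increment bound $|\nu_Q-\nu_{\hat Q}|\lesssim\epsilon$ between a flat cube and its flat parent does \emph{not} let you dominate $\calS_{\mathrm{drift}}$ by $\calB$, because these increments accumulate linearly along chains of good cubes, so the direction can drift by $\theta$ after $O(\theta/\epsilon)$ generations with no bad cube in sight; again $G$ can be empty.

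The reason the paper's route avoids both problems is that its key Lemma~\ref{lem:Lemma3.9CFO-H2k+1} (the higher-dimensional version of \cite[Lemma~3.8]{CFO}) keys the cone condition to the hypothesis ``$\calHk(\pi_{\W_\nu}(Q))\geq c\,\calHk(Q)$ for the \emph{fixed} top direction $\nu$'' rather than to ``$\nu_Q$ close to $\nu$''. Flatness plus a big shadow onto $\W_\nu$ forces, via~\eqref{eq:useful-H2k+1} and the projection formulas, that $q\notin p\cdot C_\gamma(\nu)$ at the scale of $Q$, whatever the direction of the approximating vertical plane. The exceptional cubes are then those that are non-flat (handled by WGL through the machinery of \cite[Theorem~3.9]{CFO}, not by discarding their full shadows) or those with $\calHk(\pi_{\W_\nu}(Q))<c\,\calHk(Q)$ — and the latter are controllable precisely \emph{through their shadows}: maximal such cubes are disjoint and each contributes at most $c\,\calHk(Q)\lesssim c\,\ell(Q)^{2k+1}$ to the shadow, with $c$ at our disposal, so their total shadow is $\lesssim c\,\ell(Q_0)^{2k+1}$. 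Your proposal is missing this mechanism entirely; without it (or a correct substitute for \cite[Theorem~3.9]{CFO}, whose proof the paper imports verbatim), the construction of $G$ does not go through.
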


The proof given in~\cite{CFO} for $k=1$ relies on two steps,~\cite[Lemma~3.8]{CFO} and~\cite[Theorem~3.9]{CFO}, together with a concluding argument. The proof of~\cite[Theorem~3.9]{CFO} and the concluding argument can be verbatim extended to $\Hek$, so we omit the details. The higher dimensional version of~\cite[Lemma~3.8]{CFO} is given in Lemma~\ref{lem:Lemma3.9CFO-H2k+1}.

\begin{lemma} \label{lem:Lemma3.9CFO-H2k+1}
Let $E\subset\Hek$ be a closed Ahlfors-regular set. For every $c>0$ and $M>1$, there are $\epsilon>0$ and $\gamma >0$, depending only on $c$, $M$, $k$, and the Ahlfors-regularity constants for $E$, such that the following holds. Let $\calD$ be a system of David cubes on $E$ and $Q\in \calD$. Assume that $\calHk (\pi_{\W_\nu} (Q))  \geq c \calHk (Q)$ for some $\nu\in\Sk$ and $\beta_{v,E}(Q) \leq \epsilon$. Then, for every $p\in Q$ and $q\in B(Q) \cap E$ such that $M^{-1} l(Q) \leq d(p,q) \leq M l(Q)$, one has $q \not\in p \cdot C_\gamma (\nu)$.
\end{lemma}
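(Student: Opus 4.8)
The plan is to reduce everything, using left-invariance and the scaling properties of $\calHk$, of the Kor\'anyi distance, and of the cones $C_{\gamma}(\nu)$, to the normalised situation $c_{Q}=0\in Q$ and $\ell(Q)=1$; write $B:=B(Q)$. The hypothesis $\beta_{v,E}(Q)\leq\epsilon$ produces a vertical hyperplane, which I may write as $\W=w\cdot\W_{\mu}$ for some $\mu\in\Sk$, with $E\cap B\subset\W(\epsilon)$, the closed $\epsilon$-neighbourhood of $\W$. The argument then splits into two steps: first, the big-projection hypothesis is upgraded to a quantitative \emph{transversality} statement $|\langle\nu,\mu\rangle|\geq\theta$, with $\theta\in(0,1]$ depending only on $c$, $k$ and the Ahlfors-regularity constants of $E$; second, transversality together with flatness is used to rule out the cone inclusion.

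For the first step I would use that the Kor\'anyi distance to a vertical hyperplane sees only the horizontal coordinate: $d(y,\W_{\mu})=|\langle\pi(y),\mu\rangle|$, and hence $d(y,\W)=|\langle\pi(y)-\pi(w),\mu\rangle|$, so that $\W(\epsilon)\cap B$ is the slab $\{y:|\langle\pi(y),\mu\rangle-b|\leq\epsilon\}\cap B$ for $b:=\langle\pi(w),\mu\rangle$, and moreover $|b|\leq\epsilon$ because $0=c_{Q}\in E\cap B\subset\W(\epsilon)$. Pushing this slab forward by $\pi_{\W_{\nu}}$ and using~\eqref{e:pwnu}--\eqref{e:plnu}, one checks that the image has horizontal component contained in $\{u\in\nu^{\perp}:|u|\lesssim_{k}1,\ |\langle u,\mu\rangle|\lesssim_{k}\epsilon+|\langle\nu,\mu\rangle|\}$ and vertical component in an interval of length $\lesssim_{k}1$; since $\langle u,\mu\rangle=\langle u,\mu-\langle\mu,\nu\rangle\nu\rangle$ and $|\mu-\langle\mu,\nu\rangle\nu|\geq\tfrac12$ unless $|\langle\mu,\nu\rangle|\geq\tfrac12$ (in which case there is nothing to prove, taking $\theta\leq\tfrac12$), the horizontal component lies in a genuine $(2k-1)$-dimensional slab of thickness $\lesssim_{k}\epsilon+|\langle\nu,\mu\rangle|$. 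As the restriction of $\calHk$ to the homogeneous subgroup $\W_{\nu}$ is comparable to $(2k)$-dimensional Lebesgue measure on $\nu^{\perp}\times\R$, this yields $\calHk(\pi_{\W_{\nu}}(\W(\epsilon)\cap B))\lesssim_{k}\epsilon+|\langle\nu,\mu\rangle|$. On the other hand $\pi_{\W_{\nu}}(Q)\subset\pi_{\W_{\nu}}(\W(\epsilon)\cap B)$, while $\calHk(\pi_{\W_{\nu}}(Q))\geq c\,\calHk(Q)\gtrsim_{reg}c$ (lower Ahlfors-regularity, using the ball $B_{Q}\subset Q$), so $c\lesssim_{reg}\epsilon+|\langle\nu,\mu\rangle|$; choosing $\epsilon$ small enough in terms of $c$, $k$ and the Ahlfors-regularity constants forces $|\langle\nu,\mu\rangle|\geq\theta$ for a suitable $\theta$ of that dependence.

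For the second step, fix $p\in Q$ and $q\in B\cap E$ with $M^{-1}\leq d(p,q)\leq M$, and set $x:=p^{-1}\cdot q$. Since $p,q\in E\cap B\subset\W(\epsilon)$ and the distance to $\W$ only involves $\pi$, we get $|\langle\pi(x),\mu\rangle|=|\langle\pi(q)-\pi(p),\mu\rangle|\leq2\epsilon$. Suppose, for contradiction, that $q\in p\cdot C_{\gamma}(\nu)$, i.e.\ $\|\pi_{\W_{\nu}}(x)\|\leq\gamma\|\pi_{\LL_{\nu}}(x)\|=\gamma|\langle\pi(x),\nu\rangle|$; writing $v:=\pi(x)$ and $t$ for the vertical coordinate of $x$, \eqref{e:pwnu}--\eqref{e:plnu} give $|v-\langle v,\nu\rangle\nu|\leq\gamma|\langle v,\nu\rangle|$ and $|t-\tfrac12\omega(v,\langle v,\nu\rangle\nu)|\leq\tfrac14\gamma^{2}|\langle v,\nu\rangle|^{2}$. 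Expanding $\langle v,\mu\rangle=\langle v,\nu\rangle\langle\nu,\mu\rangle+\langle v-\langle v,\nu\rangle\nu,\mu\rangle$ and choosing $\gamma:=\theta/2$, the transversality bound forces $|\langle v,\nu\rangle|\leq4\epsilon/\theta$, hence $|v|\leq2|\langle v,\nu\rangle|\lesssim\epsilon/\theta$ and $|t|\lesssim|v|\,|\langle v,\nu\rangle|+\gamma^{2}|\langle v,\nu\rangle|^{2}\lesssim\epsilon^{2}/\theta^{2}$, so that $\|x\|=\sqrt[4]{|v|^{4}+16t^{2}}\lesssim\epsilon/\theta$. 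This contradicts $\|x\|=d(p,q)\geq M^{-1}$ once $\epsilon$ is chosen small enough in terms of $\theta$ and $M$, i.e.\ in terms of $c$, $M$, $k$ and the Ahlfors-regularity constants. Hence $q\notin p\cdot C_{\gamma}(\nu)$, with $\gamma=\theta/2$ depending only on $c$, $k$ and the Ahlfors-regularity constants and $\epsilon$ on $c$, $M$, $k$ and those constants.

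The main obstacle is the first step: converting the metric-measure assumption ``$Q$ has a big vertical projection'' into the linear-algebraic transversality $|\langle\nu,\mu\rangle|\gtrsim c$. The difficulty is that $\pi_{\W_{\nu}}$ is not Lipschitz and degenerates precisely on vertical hyperplanes that are nearly tangent to $\LL_{\nu}$-cosets; what rescues the argument is that this bound is needed only for the very specific, nearly flat set $\W(\epsilon)\cap B$, whose image under $\pi_{\W_{\nu}}$ admits an explicit description in coordinates in which the degeneracy is quantitatively governed by $|\langle\nu,\mu\rangle|$. The second step is then a direct, if slightly tedious, computation with the Kor\'anyi norm and the formulas~\eqref{e:pwnu}--\eqref{e:plnu}.
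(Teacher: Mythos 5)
Your proof is correct. The underlying dichotomy is the same as in the paper's argument --- the big-projection hypothesis and the cone condition place incompatible constraints on the angle between $\nu$ and the normal direction $\mu$ of the approximating vertical hyperplane --- but your execution is genuinely different and in some respects cleaner. The paper argues by contradiction in one block: assuming $q\in C_\gamma(\nu)$ with $d(0,q)\sim_M 1$, it extracts a unit vector $u$ with $\R u\subset\pi(\W_{\nu'})$ and $\angle(u,\nu)\lesssim_M\gamma+\epsilon$, and then shows that $\pi_{\W_\nu}(Q)$ lies in a Kor\'anyi $\tau$-neighbourhood of the codimension-one set $\mathbb{V}\cap B(0,\Lambda)$, with $\mathbb{V}=(W_\nu\cap W_{\nu'})\times\R$ and $\tau\lesssim(\gamma+\epsilon)^{1/4}$ coming from the $1/2$-H\"older continuity of $\pi_{\W_\nu}$; the contradiction with $\calHk(\pi_{\W_\nu}(Q))\gtrsim c$ then rests on the soft fact that $\calHk\bigl((\mathbb{V}\cap B(0,\Lambda))_\tau\bigr)\to0$ as $\tau\to0$. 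You instead isolate the quantitative transversality $|\langle\nu,\mu\rangle|\geq\theta$, with $\theta$ depending only on $c$, $k$ and the regularity constants, as a first self-contained step, proved via the explicit slab estimate $\calHk(\pi_{\W_\nu}(\W(\epsilon)\cap B))\lesssim_k\epsilon+|\langle\nu,\mu\rangle|$, which exploits the exact identity $d(y,w\cdot\W_\mu)=|\langle\pi(y)-\pi(w),\mu\rangle|$ and the fact that $\calHk|_{\W_\nu}$ is comparable to Lebesgue measure; the second step is then a short direct computation showing that transversality together with $|\langle\pi(p^{-1}\cdot q),\mu\rangle|\lesssim\epsilon$ forces $d(p,q)\lesssim\epsilon/\theta$, incompatible with $d(p,q)\geq M^{-1}\ell(Q)$. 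What your route buys is an explicit linear (rather than fourth-root) dependence in the measure estimate, no appeal to the H\"older continuity of vertical projections or to a limiting argument, and a transparent separation of where each hypothesis enters; the order of choice of constants ($\theta$ and $\gamma=\theta/2$ independent of $M$, then $\epsilon$ shrunk in terms of $\theta$ and $M$) is consistent with the quantifiers in the statement. Two cosmetic points: it is $B_Q\cap E\subset Q$ (not $B_Q\subset Q$) that yields $\calHk(Q)\gtrsim 1$ via lower regularity, and the factors of $2$ arising from taking a near-optimal hyperplane in the definition of $\beta_{v,E}(Q)$ should be tracked; neither affects the argument.
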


Here $\beta_{v,E}(Q):= C_0 \, \beta_{v,E}(c_Q,C_0\ell(Q))$, that is,
\begin{displaymath}
\beta_{v,E}(Q)= \inf_{\W}\,  \sup_{q \in B(Q) \cap E} \frac{\dist(q,\W)}{\ell(Q)}
\end{displaymath}
where the infimum runs over all vertical hyperplanes $\W$. Recall from Section~\ref{sect:bwgl-vertical-hyperplanes} that $B(Q):=B(c_Q, C_0\ell(Q))$.

\begin{proof} We assume that the conclusion of the lemma fails in the following sense. We consider a closed Ahlfors-regular set $E \subset \Hek$ with a system of David cubes $\calD$ and $Q\in \calD$ such that
\begin{itemize}
\item $0\in Q$ and $\ell(Q)=1$,
\item $\calHk (\pi_{\W_\nu} (Q))  \geq c\calH^{2k + 1}(Q)$ for some $\nu\in\Sk$,
\item $\beta_{v,E}(Q) \leq \epsilon$, and
\item there exists $q\in B(Q) \cap E$ such that $M^{-1} \leq d(0,q) \leq M$ and $q \in C_\gamma (\nu)$.
\end{itemize}
We will show that these conditions lead to a contradiction. It is not difficult to reduce the general case to this one, see the beginning of the proof of~\cite[Lemma~3.8]{CFO} for more details.

Here is a sketch of the proof. If $\gamma$ is very small, then $q \in C_{\gamma}(\nu)$ implies that $q \approx \pi_{\mathbb{L}_{\nu}}(q)$. Since also $q \in Q$ and $d(0,q) \sim 1$, it follows that a small neighbourhood of the well-approximating vertical hyperplane $\W_{\nu'}$ -- guaranteed by $\beta_{v,E}(Q) \leq \epsilon$ -- must contain $\mathbb{L}_{\nu} \cap B(0,C_{0})$. But $\pi_{\W_{\nu}}(\mathbb{L}_{\nu}) = \{0\}$, so $\pi_{\W_{\nu}}(\W_{\nu'} \cap B(0,C_{0}))$ is contained in a small neighbourhood of a codimension-$1$ subspace of $\W_{\nu}$. But $Q$ is well-approximated by $\W_{\nu'} \cap B(0,C_{0})$, so also $\pi_{\W_{\nu}}(Q)$ is contained in a small neighbourhood of a codimension-$1$ subspace of $\W_{\nu}$. This contradicts $\calH^{2k + 1}(\pi_{\W_{\nu}}(Q)) \geq c\calH^{2k + 1}(Q) \sim 1$.

We turn to the details. Let $q'\in \Hek$ and $\nu'\in\Sk$ be such that
\begin{equation} \label{e:beta-W'}
\sup_{p' \in B(Q) \cap E} \dist(p',q' \cdot \W_{\nu'}) \leq 2\epsilon.
\end{equation}

We first show that there is $u\in\Sk$ such that $\R u \subset \pi(\W_{\nu'})$ and the unoriented Euclidean angle $\angle (u,\nu)$ between $u$ and $\nu$ satisfies $\angle (u,\nu) \lesssim_{M} (\gamma + \epsilon)$, provided $\epsilon$ and $\gamma$ are chosen small enough depending only on $M$. On the one hand, we have $q=(v,t) \in C_\gamma (\nu)$, that is,
$$\|\pi_{\W_\nu}(q)\| \leq \gamma \|\pi_{\LL_\nu}(q)\|$$ with $\pi_{\W_\nu}(q) =  (v-\langle v,\nu \rangle \nu, t-\omega(v,\langle v,\nu \rangle \nu)/2)$ and $\pi_{\LL_\nu}(q) = (\langle v,\nu \rangle \nu,0)$, see~\eqref{e:pwnu} and~\eqref{e:plnu}. Then, taking into account that, by assumption, $d(0,q) \sim_{M} 1$, computations similar to those used to obtain~\cite[(3.8)]{CFO} give $|v| \sim_{M} 1$, provided $\gamma$ is chosen small enough. On the other hand, by~\cite[Remark~3.4]{CFO}, which extends to the $\Hek$ setting, we know that~\eqref{e:beta-W'} implies that, for all $p'\in B(Q) \cap E$,
\begin{equation} \label{e:beta-W'-bis}
\dist (p',\W_{\nu'}) \leq 4 \epsilon.
\end{equation}
In particular, since $q\in B(Q) \cap E$, there is $(v',t') \in \W_{\nu'}$ such that $d(q,(v',t'))\leq 4 \epsilon$. Hence $|v-v'| \leq d(q,(v',t')) \leq 4 \epsilon$. Since $|v| \sim_{M} 1$, it follows that $|v'| \sim_{M} 1$ provided $\epsilon$ is chosen small enough depending only on $M$. Then we get $$|v'-\langle v',\nu \rangle \nu| \leq |v-\langle v,\nu \rangle \nu| + |(v'-\langle v',\nu \rangle \nu) - (v-\langle v,\nu \rangle \nu)| \lesssim_{M} (\gamma + \epsilon) |v'|. $$
Hence $u:= v'/|v'| \in \Sk$ is such that $\R u \subset \pi(\W_{\nu'})$ and $\angle (u,\nu) \lesssim_{M} (\gamma + \epsilon)$ as required.

We set $W_\nu:= \pi(\W_\nu) = \nu^\perp$ and $W_{\nu'}:=\pi(\W_{\nu'})$. By the previous argument, we get that, for $\epsilon$ and $\gamma$ chosen small enough depending only on $M$, we have $W_\nu + W_{\nu'} = \R^{2k}$. Thus $\dim(W_\nu \cap W_{\nu'})= 2k-2$ and $W_{\nu'} = \R u \oplus (W_\nu \cap W_{\nu'})$. We set $\mathbb{V}:= (W_\nu \cap W_{\nu'}) \times \R$ and we next show that there is a constant $\Lambda>0$ depending only on the Ahlfors-regularity constants for $E$, such that, for every $\tau\in (0,1)$, one can choose $\epsilon$ and $\gamma$ small enough depending only on $\tau$, $M$, and on the Ahlfors-regularity constants for $E$, so that
\begin{equation}  \label{e:proj-subset-tau-neighbourhood}
\pi_{\W_\nu} (Q) \subset (\mathbb{V} \cap B(0,\Lambda))_\tau:=\{p'\in\W_\nu :\, \dist(p',\mathbb{V} \cap B(0,\Lambda)) \leq \tau\}.
\end{equation}
Indeed, let $p'\in Q \subset B(Q) \cap E$. By~\eqref{e:beta-W'-bis} there is $(w',s') \in \W_{\nu'}$ such that $d(p',(w',s')) \leq 4 \epsilon$. We have $w'=a u + \hat{w}$ for some $a\in\R$ and $\hat{w} \in W_\nu \cap W_{\nu'}$. We set $w'':=a\nu + \hat{w}$ and $p'':=(w'',s')$. Then
\begin{equation*}
\begin{split}
\pi_{\W_\nu}(p'') &= (w'' - \langle w'',\nu \rangle\nu, s'-\omega(w'',\langle w'',\nu \rangle\nu)/2) \\
&= (\hat{w},s'-\omega(\hat{w},a\nu)/2).
\end{split}
\end{equation*}
Hence $\pi_{\W_\nu}(p'') \in \mathbb{V}$. Furthermore, we have $$\|\pi_{\W_\nu}(p'')\| \leq 2 d(0,p'')$$ with $d(0,p'') \leq d(0,(w',s')) + d((w',s'),p'')$. Since $p'\in Q$ and $d(p',(w',s')) \leq 4 \epsilon$, we have $d(0,(w',s')) \lesssim_{reg} 1$. Next, this implies that $|a\langle u,\nu\rangle| \leq |w'| \leq d(0,(w',s')) \lesssim_{reg} 1$. Since $\angle (u,\nu) \lesssim_{M} (\gamma + \epsilon)$, one can choose $\epsilon$ and $\gamma$ small enough depending only on $M$ so that $1/2 \leq \langle u,\nu \rangle \leq 1$ and $|u-\nu|\leq (\gamma + \epsilon)$. Then we get $|a|  \lesssim_{reg} 1$ and it follows that $|w''-w'| = |a(\nu - u)|  \lesssim_{reg} (\gamma + \epsilon)$. We also have $|\hat{w}| = |au-w'| \leq |a| + |w'| \lesssim_{reg} 1$ and
\begin{equation*}
\begin{split}
\omega(w',w'') &= \omega(au+\hat{w},a\nu + \hat{w})\\
&= a^2 \omega(u,\nu) + a \omega(u-\nu,\hat{w})\\
&= a^2 \omega(u-\nu,\nu) + a  \omega(u-\nu,\hat{w}),
\end{split}
\end{equation*}
hence, $$|\omega(w',w'')| \leq a^2 |u-\nu| \, |\nu| + |a| \, |u-\nu| \, |\hat{w}| \lesssim_{reg} |u-\nu| \lesssim_{reg}  \gamma + \epsilon.$$
Since $(w',s')^{-1} \cdot p'' = (w''-w',-\omega(w',w'')/2)$, we get $$d((w',s'),p'')\lesssim_{reg} (\gamma+\epsilon)^{1/2}.$$ It follows that $\|\pi_{\W_\nu}(p'')\| \lesssim_{reg} 1$. Hence, there is $\Lambda>0$, which depends only on the Ahlfors-regularity constant for $E$, such that $\pi_{\W_\nu}(p'') \in B(0,\Lambda)$ provided $\epsilon$ and $\gamma$ are chosen small enough depending only on $M$. Remembering that vertical projections are locally $1/2$-H\"older continuous, we then get
\begin{equation*}
\begin{split}
\dist(\pi_{\W_\nu}(p'), \mathbb{V} \cap B(0,\Lambda)) &\leq d(\pi_{\W_\nu}(p'),\pi_{\W_\nu}(p'')) \\
&\lesssim_{reg} d(p',p'')^{1/2}\\
&\lesssim_{reg} (d(p',(w',s')) + d((w',s'),p''))^{1/2} \lesssim_{reg} (\gamma + \epsilon)^{1/4},
\end{split}
\end{equation*}
which concludes the proof of~\eqref{e:proj-subset-tau-neighbourhood}.

To conclude the proof of the lemma, we first note that $\calHk(\mathbb{V} \cap B(0,\Lambda)) = 0$. This can be seen from the fact that $\mathbb{V}$ is a $(2k-1)$-dimensional linear subspace of $\W_\nu$ and $\calHk|_{\W_\nu}$ coincides, up to a multiplicative constant, with the $2k$-dimensional Lebesgue measure when identifying $\W_\nu$ with $\mathbb{R}^{2k-1}\times \R$. Hence $$\lim_{\tau\rightarrow 0} \calHk((\mathbb{V} \cap B(0,\Lambda))_\tau) = 0.$$ It follows that one can choose $\tau >0$ small enough, depending only on $c$ and on the Ahlfors-regularity constant for $E$, so that, for $\epsilon>0$ and $\gamma>0$ chosen small enough accordingly, and hence, depending only on $c$, $M$, and on the Ahlfors-regularity constant for $E$, one has $\calHk(\pi_{\W_\nu}(Q)) \leq \calHk((\mathbb{V} \cap B(0,\Lambda))_\tau) < c \calHk(Q)$ (recall that $\ell(Q)= 1$ and hence $\calHk(Q)\sim_{reg} 1$) and this gives a contradiction.
\end{proof}

%%%%%%%%%%%%%

\section{Comments} \label{sect:final}

%%%

\subsection{Regular triples and corona decompositions} \label{subsect:NY-vs-our-setting}

Mimicking the terminology introduced in~\cite{NY}, given a measurable set $E\subset \Hek$, we say that $(E,E^c, \partial E)$ is \textit{regular} if there is $c>0$ such that
\begin{equation} \label{e:int-upper-reg-2k+2}
\calH^{2k+2} (B(p,r)\cap E) \geq c\, r^{2k+2} , \qquad  p \in E, \quad 0 < r < \diam \partial E,
\end{equation}
\begin{equation} \label{e:ext-upper-reg-2k+2}
\calH^{2k+2} (B(p,r)\setminus E) \geq c\, r^{2k+2} , \qquad  p \in E^c, \quad 0 < r < \diam \partial E,
\end{equation}
and $\partial E$ is Ahlfors-regular (with dimension $2k+1$, see Definition~\ref{def:Ahlfors-regular}).

Local versions of regular triples, for which~\eqref{e:upper-regular},~\eqref{e:lower-regular},~\eqref{e:int-upper-reg-2k+2} and~\eqref{e:ext-upper-reg-2k+2} are required to hold for $r \in (0,r_0]$ for some $r_0>0$, have been considered in~\cite{NY}. We could also have considered local versions for the definition of a Semmes surface $S$, where~\eqref{e:upper-regular} and~\eqref{corkscrew} are required to hold for $r \in (0,r_0]$ for some $r_0>0$. Our arguments in the present paper also apply in this case to give big pieces of intrinsic Lipschitz graphs inside balls centred on $S$ with radius $r \in (0,r_0]$. For the sake of simplicity, we discuss here the link between (global) regular triples and (global) Semmes surfaces, noting that these links can be easily extended for the local versions.

The link between regular triples and open sets satisfying the corkscrew condition, Definition~\ref{def:corkscrew-cdt}, is given in the next proposition.

\begin{proposition} \label{prop:NY-vs-Semmes-surfaces} Let $S\subset \Hek$. The following are equivalent.
\begin{itemize}
\item[\textit{(i)}] There is a measurable set $E\subset \Hek$ such that $(E,E^c, \partial E)$ is regular with $S=\partial E$.
\item[\textit{(ii)}] The set $S$ is a closed upper Ahlfors-regular set and there is an open set $\Omega\subset \Hek$ satisfying the corkscrew condition such that $S=\partial \Omega$.
\end{itemize}
\end{proposition}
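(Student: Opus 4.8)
The plan is to prove each implication by producing an explicit set: for $(ii)\Rightarrow(i)$ I take $E:=\Omega$, and for $(i)\Rightarrow(ii)$ I take $\Omega:=\interior E$. Apart from elementary point-set topology, I will use two facts: first, that $\partial E$, being Ahlfors-regular with dimension $2k+1<2k+2$, has locally finite $\calHk$ measure and is therefore $\calH^{2k+2}$-null; and second, in the first direction only, Corollary~\ref{cor:corkscrew}, which promotes upper Ahlfors-regularity of $\partial\Omega$ to Ahlfors-regularity. The only genuinely geometric point, needed in both directions, is the interplay between the Lebesgue density bounds \eqref{e:int-upper-reg-2k+2}--\eqref{e:ext-upper-reg-2k+2} and the existence of corkscrew \emph{balls}.

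For $(ii)\Rightarrow(i)$, set $E:=\Omega$. Then $\partial E=\partial\Omega=S$ is closed and upper Ahlfors-regular by hypothesis, and also lower Ahlfors-regular, hence Ahlfors-regular, by Corollary~\ref{cor:corkscrew}. To get \eqref{e:int-upper-reg-2k+2}, fix $p\in E=\Omega$ and $0<r<\diam\partial\Omega$: if $p$ is at distance $\gtrsim r$ from $\partial\Omega$, a fixed proportion of $B(p,r)$ already lies in $\Omega$; otherwise I pick $p'\in\partial\Omega$ within distance $\ll r$ of $p$, apply the corkscrew condition at $p'$ at scale $\sim r$ to obtain $q_1\in\Omega$ with $\dist(q_1,\partial\Omega)\gtrsim r$, and note that a ball of radius $\sim\dist(q_1,\partial\Omega)$ about $q_1$ is connected, disjoint from $\partial\Omega$, meets $\Omega$, hence lies in $\Omega$, and (after adjusting the comparability constants) inside $B(p,r)$. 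Estimate \eqref{e:ext-upper-reg-2k+2} follows in the same way, for $p\in E^c=\Omega^c$, using the corkscrew point $q_2\in\overline\Omega^c\subseteq\Omega^c$ and the fact that $\Omega^c\setminus\partial\Omega=\overline\Omega^c$ is open.

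For $(i)\Rightarrow(ii)$, $S=\partial E$ is automatically closed and, being the boundary of a regular triple, upper Ahlfors-regular. Put $\Omega:=\interior E$, which is open. Since $\partial E$ is $\calH^{2k+2}$-null, $E$ and $\interior E$ differ by an $\calH^{2k+2}$-null set; feeding this into \eqref{e:int-upper-reg-2k+2} shows that for every $p\in E$ and every small $r$ one has $\calH^{2k+2}(B(p,r)\cap\interior E)>0$, so $E\subseteq\overline{\interior E}$. Hence $\overline\Omega=\overline{\interior E}=\overline E$ and $\partial\Omega=\overline E\setminus\interior E=\partial E=S$, and also $\overline\Omega^c=(\overline E)^c=\interior(E^c)$. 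It remains to verify the corkscrew condition for $\Omega$. A short limiting argument extends \eqref{e:int-upper-reg-2k+2} to all $p\in\overline E$ and \eqref{e:ext-upper-reg-2k+2} to all $p\in\overline{E^c}$, so both hold at any $p\in\partial E$. Given such $p$ and $0<r<\diam\partial E$, I claim there is $q_1\in E\cap B(p,r)$ with $\dist(q_1,\partial E)\gtrsim_{reg}r$: otherwise $E\cap B(p,r/2)$ would lie in the $c'r$-neighbourhood $(\partial E)(c'r)$ for an arbitrarily small $c'$, while the Ahlfors-regularity of $\partial E$ yields $\calH^{2k+2}((\partial E)(c'r)\cap B(p,r/2))\lesssim_{reg}c'\,r^{2k+2}$, contradicting $\calH^{2k+2}(E\cap B(p,r/2))\gtrsim_{reg}r^{2k+2}$ once $c'$ is small. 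Such a $q_1$ lies in $\interior E=\Omega$, being in $E$ but not in $\partial E$, and a ball of radius $\sim\dist(q_1,\partial E)$ about $q_1$ lies in $\Omega\cap B(p,r)$ after shrinking constants; the exterior point $q_2\in\overline\Omega^c=\interior(E^c)$ is produced symmetrically from \eqref{e:ext-upper-reg-2k+2}. Since $\diam\partial\Omega=\diam\partial E$, this is exactly the corkscrew condition.

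I expect the main obstacle to be the second half of $(i)\Rightarrow(ii)$: the covering estimate $\calH^{2k+2}((\partial E)(\rho)\cap B(p,R))\lesssim_{reg}\rho\,R^{2k+1}$ for $\rho\le R$, which converts the Lebesgue density bounds into honest corkscrew balls, together with the topological identification $\partial(\interior E)=\partial E$. The covering estimate is proved by choosing a maximal $\rho$-separated subset of $\partial E\cap B(p,R+\rho)$, whose cardinality is $\lesssim_{reg}(R/\rho)^{2k+1}$ by Ahlfors-regularity of $\partial E$ (here both the upper and the lower bound enter), and observing that the concentric balls of radius $2\rho$ cover $(\partial E)(\rho)\cap B(p,R)$. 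Everything else reduces to bookkeeping with $\interior$, $\overline{(\cdot)}$ and $\partial$, and to the elementary observation that a connected set disjoint from $\partial E$ which meets $\interior E$ (resp.\ $\interior(E^c)$) is contained in $\interior E$ (resp.\ $\interior(E^c)$).
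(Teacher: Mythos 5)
Your proposal is correct and follows essentially the same route as the paper: taking $E:=\Omega$ for one direction (with lower Ahlfors-regularity supplied by the paper's main results) and $\Omega:=\interior E$ for the other, identifying $\partial(\interior E)=\partial E$ via the density bounds and $\calH^{2k+2}(\partial E)=0$, and converting the Lebesgue density bounds into corkscrew points by the same maximal-net counting argument (your neighbourhood estimate $\calH^{2k+2}((\partial E)(\rho)\cap B(p,R))\lesssim_{reg}\rho R^{2k+1}$ is exactly the inline computation in the paper's proof). The only differences are presentational.
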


\begin{proof}
If $\textit{(ii)}$ holds, then $E:=\Omega$ satisfies~\eqref{e:int-upper-reg-2k+2} and~\eqref{e:ext-upper-reg-2k+2} as a rather immediate consequence of the corkscrew condition. Moreover, when~$\textit{(ii)}$ holds, then $S=\partial \Omega$ is a Semmes surface and we know from Proposition~\ref{prop:lowerAR} that $S$ is lower Ahlfors-regular and hence Ahlfors-regular. It follows that $(\Omega,\Omega^c,\partial \Omega)$ is regular and hence $\textit{(i)}$ holds.

To prove the converse implication, we let $E\subset \Hek$ be measurable such that $(E,E^c, \partial E)$ is regular with $S=\partial E$. We denote by $\Omega$ the interior of $E$ and we prove that $S=\partial \Omega$ and $\Omega$ satisfies the corkscrew condition. Since $\partial \Omega \subset \partial E = S$, to prove the first claim, we only need to check that $S\subset \partial \Omega$. Let $p\in S$. Then~\eqref{e:int-upper-reg-2k+2} and~\eqref{e:ext-upper-reg-2k+2} imply that for every $0<r<\diam S$,  we have
\begin{equation*}
\min\{\calH^{2k+2} (B(p,r)\cap E), \calH^{2k+2} (B(p,r)\setminus E)\} \gtrsim_{reg} r^{2k+2},
\end{equation*}
where $\gtrsim_{reg}$ means here that the implicit multiplicative constant depends on $k$ and on the regularity constant $c$ coming from~\eqref{e:int-upper-reg-2k+2} and~\eqref{e:ext-upper-reg-2k+2}. Since $\partial E$ is Ahlfors-regular, we have $\calH^{2k+2} (\partial E) = 0$, hence $$\calH^{2k+2} ( E \bigtriangleup \Omega) = \calH^{2k+2} ( E \bigtriangleup \overline \Omega)=0.$$ It follows that
\begin{multline*}
\min \{\calH^{2k+2} (B(p,r)\cap \Omega), \calH^{2k+2}  (B(p,r)\setminus \overline \Omega)\}\\
 =\min\{\calH^{2k+2} (B(p,r)\cap E), \calH^{2k+2} (B(p,r)\setminus E)\} \gtrsim_{reg} r^{2k+2}
\end{multline*}
for every $0<r<\diam S$. Hence $p\in\partial\Omega$ and $S\subset \partial \Omega$ as claimed.

To prove that $\Omega$ satisfies the corkscrew condition, we let $p\in \partial \Omega$ and $0<r < \diam \partial \Omega$. It follows from the previous argument that
\begin{equation} \label{e:lower-Omega}
\min\{\calH^{2k+2} (B(p,r/4)\cap \Omega), \calH^{2k+2} (B(p,r/4)\setminus \overline \Omega)\} \gtrsim_{reg} r^{2k+2}.
\end{equation}
Let $0<t<1/4$ and $\mathcal{A}$ be a maximal family of points in $S\cap B(p,r/2)$ at mutual distance $>tr$. Since the balls $B(q,tr/2)$, $q\in\mathcal{A}$, are disjoint and contained in $B(p,r)$, we get from the Ahlfors-regularity of $S=\partial\Omega$ that
\begin{equation*}
\begin{split}
(tr)^{2k+1} \card \mathcal{A} &\lesssim_{reg} \sum_{q\in \mathcal{A}} \calHk (S\cap B(q,tr/2))\\
&\lesssim_{reg} \calHk (S\cap B(p,r)) \lesssim_{reg} r^{2k+1},
\end{split}
\end{equation*}
and hence $\card \mathcal{A} \lesssim_{reg} t^{-(2k+1)}$. Next, since $$\{q' \in B(p,r/4):\, \dist(q',S) \leq tr \} \subset \bigcup_{q\in \mathcal{A}} B(q,2tr),$$ we get
\begin{equation*}
\calH^{2k+2} (\{q' \in B(p,r/4):\, \dist(q',S) \leq tr \}) \lesssim_{reg} t r^{2k+2}.
\end{equation*}
Choosing $t$ small enough, depending only on the regularity constants for $(E,E^c,\partial E)$, this last estimate together with~\eqref{e:lower-Omega} implies the existence of $q_1 \in B(p,r/4)\cap \Omega$ and $q_2 \in B(p,r/4)\setminus \overline \Omega$ with $\dist(q_i, S) > tr$, $i=1,2$. Hence $\Omega$ satisfies the corkscrew condition and this concludes the proof of the proposition.
\end{proof}

It follows
%in particular
from Proposition~\ref{prop:NY-vs-Semmes-surfaces} that $\partial E$ is a Semmes surface whenever $(E,E^c, \partial E)$ is regular. However, as already mentioned in the introduction, there are Semmes surfaces that do not arise as the boundary of some open set satisfying the corkscrew condition. A simple example is given by the union of the unit sphere $\partial B(0,1)$ with the intersection of the unit ball $B(0,1)$ with a hyperplane through the origin. Also, a connected component of $S^c$ where $S$ is a Semmes surface may not satisfy the corkscrew condition. Examples in the Euclidean setting can for instance be found in~\cite{MR3738187}. Hence the setting in the present paper includes boundaries of sets $E$ for which $(E,E^c, \partial E)$ is regular, or equivalently, boundaries of open sets satisfying the corkscrew condition with upper Ahlfors-regular boundary, but is slightly more general. %In particular, the notion of horizontal width introduced in Section~\ref{subsect:width} turns out to be a key tool to deal with Semmes surfaces insofar as they do not automatically arise as boundaries of open sets satisfying the corkscrew condition.

It is proven in~\cite[Section~9]{NY} that if $(E,E^c,\partial E)$ is regular then the pair $(E,\partial E)$ admits a ``corona decomposition''. We will not enter the details of the definition of such a decomposition here, and refer to~\cite[Definition~53]{NY}, and~\cite[I.3]{MR1251061} for its Euclidean analogue.
%, for further details.
In Euclidean spaces,
having big pieces of Lipschitz graphs (BPLG) for a closed Ahlfors-regular set implies the existence of a corona decomposition.
% the latter being
The latter is one of several characterisations of uniform rectifiability in $\mathbb{R}^n$ and it is equivalent to the validity of the bilateral weak geometric lemma. In the Euclidean setting, it is also known that BPLG is stronger than uniform rectifiability, as there are examples of uniformly rectifiable sets without big projections, and hence, without BPLG. In the Heisenberg setting, the possible links, or differences, between the analogues of these various notions are not well understood at the time. Understanding them better would be one further step in the development of the theory of ``uniform rectifiability'' in Heisenberg groups.

%%%

\subsection{Semmes surfaces in Euclidean space}\label{subsect:euclidean-setting} In this section, we discuss the analogues of our arguments in the Euclidean setting. We recall that a Semmes surface in $\R^n$ is a closed upper Ahlfors-regular set with dimension $n-1$ that satisfies Condition~\textbf{B}. The definitions of upper Ahlfors-regularity with dimension $n-1$ and of Condition~\textbf{B} in $\R^n$ are analogous to the versions we stated for the Heisenberg group in Definition~\ref{def:Ahlfors-regular} and Definition~\ref{def:conditionB}, except that all metric concepts are now, as well as in the rest of this section, defined with respect to the Euclidean distance. We refer to~\cite[Definition~2.2]{MR1251061} or~\eqref{e:def-bwgl-eucl} below for the definition of the bilateral weak geometric lemma in $\R^{n}$.

We first recall that a set $S$ satisfying Condition~\textbf{B} in $\R^n$ is automatically lower Ahlfors-regular, and hence Ahlfors-regular, with dimension $n-1$ and even has big projections. This is a rather immediate consequence of the definitions together with the fact that orthogonal projections onto subspaces are Euclidean Lipschitz maps. Then, the fact that Semmes surfaces in $\R^{n}$ satisfy BWGL has been proven by David and Semmes through an intermediate ``local symmetry condition'' \cite[Theorem~1.20]{MR1132876} together with~\cite[Proposition~5.5]{DS1}. We record here that the method of the current paper works in $\R^{n}$, too, and hence gives a new and direct proof of BWGL for Semmes surfaces in Euclidean spaces.

The definitions of width, non-convexity, and non-monotonicity with respect to a line in a ball are verbatim the same in $\R^{n}$ as in $\Hek$, except that we consider all possible lines, that is, all one-dimensional affine subspaces of $\R^n$. The conclusion of Lemma~\ref{lemma1} holds for every line in $\R^n$
since every line $\ell$ has now the crucial feature that for all line segments $I\subset\ell$, one has $\mathrm{diam}(I) = \mathcal{H}^1(I)$.

It is well-known that there exists a unique, up to a multiplicative constant, non-trivial isometry-invariant measure on the set $\calL_{\R^n}$ of all lines in $\R^{n}$. We denote such a measure by $\eta$. Then, in perfect analogy with~\eqref{NMDef} and~\eqref{e:def-totalwidth}, we define
\begin{displaymath} \NM_{B(x,r)}(A) := \frac{1}{r^{n}} \int_{\calL_{\R^n}} \NM_{B(x,r)}(A,\ell) \, d\eta(\ell) \end{displaymath}
and
\begin{displaymath} \wid_{B(x,r)}(E) := \frac{1}{r^{n}} \int_{\calL_{\R^n}} \wid_{B(x,r)}(E,\ell) \, d\eta(\ell) \end{displaymath}
for measurable sets $A \subset \R^{n}$ and closed sets $E \subset \R^{n}$. The conclusion of Proposition~\ref{form1} can then obviously be rephrased in the Euclidean setting using these latter definitions.

We recall now that the classification of monotone sets in $\R^n$ is much easier than its analogue in $\Hek$ and can for instance be found in~\cite[Lemma~64]{NY}, see also~\cite[Lemma~4.2]{CK}. Then the proof of Lemma~\ref{lem:non-monotonicity-vs-monotone-sets} and Proposition~\ref{prop:small-non-mono-imply-approx-half-space} can be easily translated to the Euclidean setting to give the following

\begin{proposition}
For every $C>0$ and $\delta>0$, there exists $0< \gamma < 1$ such that the following holds. If $F \subset \R^n $ is measurable with $C$-upper Ahlfors-regular boundary with dimension $n-1$, $p\in \R^n$, $r>0$, and $\NM_{B(p,r)}(F) \leq  \gamma^{n + 1}$, then there is a half-space $P^-\subset \R^n$ such that
\begin{equation*}  \frac{\calH^{n}([F \bigtriangleup P^{-}] \cap B(x,\gamma r))}{\calH^{n}(B(x,\gamma r))} \leq \delta. \end{equation*}
\end{proposition}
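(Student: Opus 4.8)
The plan is to follow the compactness argument of Naor and Young behind Proposition~\ref{prop:small-non-mono-imply-approx-half-space} (i.e.\ \cite[Proposition~66]{NY}), which is essentially dimension-agnostic once a classification of monotone sets is available, replacing Theorem~\ref{thm:classification-monotone-sets} by its much simpler Euclidean counterpart \cite[Lemma~64]{NY} (see also \cite[Lemma~4.2]{CK}): a monotone measurable subset of $\R^n$ coincides, up to a Lebesgue-null set, with $\emptyset$, with $\R^n$, or with a half-space. I would also use the Euclidean version of Lemma~\ref{lem:non-monotonicity-vs-monotone-sets}, whose proof is one-dimensional and transfers verbatim; this makes it harmless to work with $\NM$ in place of the Naor--Young non-monotonicity, since the hypothesis $\NM_{B(p,r)}(F)\le\gamma^{n+1}$ controls both. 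The analogues of Lemma~\ref{lemma1} and Proposition~\ref{form1} hold in $\R^n$ for exactly the same reasons, and the scaling and (concentric) monotonicity of $\NM$ carry over as well.

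The main argument runs by contradiction. Fix $C>0$ and $\delta>0$ and suppose the conclusion fails for every $\gamma$. Translating and dilating (both preserve $\NM$, the $C$-upper Ahlfors-regularity of the boundary, and the negated conclusion), one normalizes a counterexample so that $p=0$ and $r=1$; then dilating by $1/\gamma$ turns $B(0,\gamma)$ into $B(0,1)$ and produces measurable sets $F_j\subset\R^n$ with $C$-upper Ahlfors-regular boundaries such that, for every fixed $R>0$, $\NM_{B(0,R)}(F_j)\to 0$, while $\calH^n((F_j\bigtriangleup P^-)\cap B(0,1))>\delta\,\calH^n(B(0,1))$ for every half-space $P^-$. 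The convergence $\NM_{B(0,R)}(F_j)\to 0$ uses the elementary estimate $\NM_{B(0,R)}(A)\le (R'/R)^n\NM_{B(0,R')}(A)$ for $R\le R'$, which follows from $\NC_{B}(A,\ell)\le\NC_{B'}(A,\ell)$ for concentric $B\subset B'$ together with the normalization $1/r^n$ in the definition of $\NM_{B(x,r)}$; applied with $R'=1/\gamma$ and $\NM_{B(0,1/\gamma)}(F_j)\le\gamma^{n+1}$ it gives $\NM_{B(0,R)}(F_j)\le\gamma/R^n\to 0$. Next, $C$-upper Ahlfors-regularity of $\partial F_j$ gives, via Federer's criterion (the essential boundary is contained in the topological one), a uniform bound on $\mathrm{Per}(F_j;B(0,R))$ for each $R$, so a subsequence converges, $\chi_{F_j}\to\chi_F$ in $L^1_{\mathrm{loc}}(\R^n)$, the limit being the characteristic function of a measurable set $F$. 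Passing to a further subsequence gives a.e.\ convergence in $\R^n$, hence $\chi_{F_j}|_\ell\to\chi_F|_\ell$ in $L^1_{\mathrm{loc}}(\ell)$ for $\eta$-almost every line $\ell$; since $A\mapsto\NM_{B(0,R)}(A,\ell)$ is lower semicontinuous under $L^1$ convergence (the competing intervals lie in a compact interval, so one extracts a Hausdorff-convergent subsequence of near-optimizers), Fatou's lemma yields $\NM_{B(0,R)}(F)\le\liminf_j\NM_{B(0,R)}(F_j)=0$ for every $R$. Letting $R\to\infty$ shows $F$ is monotone, so by the classification it is, modulo a null set, $\emptyset$, $\R^n$, or a half-space; in each case one can pick a half-space $P^-$ with $\calH^n((F\bigtriangleup P^-)\cap B(0,1))=0$ (for $F=\emptyset$ take $P^-$ disjoint from $B(0,1)$, for $F=\R^n$ take $P^-\supset B(0,1)$), whence $\calH^n((F_j\bigtriangleup P^-)\cap B(0,1))\to 0$, contradicting the lower bound for $j$ large.

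The delicate step is the compactness: one must verify that upper Ahlfors-regularity of the \emph{topological} boundaries $\partial F_j$ really does force a uniform local perimeter bound (it does, since $\calH^{n-1}(\partial F_j\cap B(0,R))\lesssim R^{n-1}$ bounds $\calH^{n-1}$ of the essential boundary and hence, by Federer, gives finite perimeter with the same bound), and that the lower semicontinuity of $\NM(\cdot,\ell)$ can be invoked along a single subsequence for $\eta$-almost every line $\ell$. This is precisely the place where the hypothesis on $\partial F$ is used, and it mirrors the mechanism of \cite[Proposition~66]{NY} and, in a more quantitative form, \cite[Theorem~4.3]{CKN}. Relative to Proposition~\ref{prop:small-non-mono-imply-approx-half-space}, the Euclidean statement is if anything easier, since the monotone-set classification in $\R^n$ is elementary; I do not expect any genuinely new obstacle beyond carefully transcribing the Heisenberg proofs of Lemma~\ref{lem:non-monotonicity-vs-monotone-sets} and Proposition~\ref{prop:small-non-mono-imply-approx-half-space}.
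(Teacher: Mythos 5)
Your proposal is correct and follows essentially the same route as the paper, which proves this Euclidean statement by transcribing the compactness argument behind Proposition~\ref{prop:small-non-mono-imply-approx-half-space} (i.e.\ \cite[Proposition~66]{NY}) --- normalization, a uniform local perimeter bound from the upper Ahlfors-regular topological boundary via Federer's criterion, $L^1_{\mathrm{loc}}$ compactness, lower semicontinuity of non-monotonicity along lines, and the elementary Euclidean classification of monotone sets in place of Theorem~\ref{thm:classification-monotone-sets}. The paper only sketches this translation, and your write-up supplies the details consistently with it.
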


Next, the proof of Proposition~\ref{monotonicityToBetas} can be translated to the Euclidean setting without any modifications, except for the trivial ones, to give the following

\begin{proposition} \label{prop:eucl-monotonicityToBetas}
There is a dimensional constant $\overline \epsilon >0$ such that the following holds for every $0 <\epsilon < \overline \epsilon$. Assume that $S \subset \R^n$ is a closed set satisfying Condition~\textbf{B}. There exists $\delta > 0$, depending only on $\epsilon,n$ and the Condition~\textbf{B} constant for $S$, such that if $p \in S$ and $0<r <\diam S$ are such that for every component $\Omega$ of $S^c$, there exists a half-space $P_{\Omega}^{-} \subset \R^n$ with
\begin{equation*}
\frac{\calH^{n}([\Omega \bigtriangleup P_{\Omega}^{-}] \cap B(p,r))}{\calH^{n}(B(p,r))} \leq \delta,
\end{equation*}
 then, there exists a hyperplane $P \subset \R^n$ such that $\dist(q,P) \leq \epsilon r$ for all $q \in S \cap B(p,r/80)$ and $\dist(q,S) \leq \epsilon r$ for all $q\in P\cap B(p,r/80)$.
\end{proposition}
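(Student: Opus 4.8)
The plan is to run the proof of Proposition~\ref{monotonicityToBetas} line by line in the Euclidean setting, using that all the measure-theoretic and combinatorial parts of that argument are metric-agnostic and that the one step where the Heisenberg proof genuinely exploited the interplay between the Kor\'anyi and Euclidean metrics (the $1/2$-H\"older comparison $d\le c' d_{\mathrm{Euc}}^{1/2}$ on $B(0,1/2)$) simply trivialises here, since the ambient metric \emph{is} the Euclidean one and $\calH^n$ is exactly Lebesgue measure on $\R^n$. As before, one may first normalise by a translation and a dilation so that $p=0\in S$ and $r=1$, and fix $\epsilon>0$ small depending only on $n$.

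The first step is the observation that for every connected component $\Omega$ of $S^c$ one has $\dist(q,P_\Omega)\le\epsilon$ for all $q\in S\cap P_\Omega^-\cap B(0,1/2)$: if some such $q$ had $\dist(q,P_\Omega)>\epsilon$, applying Condition~\textbf{B} to $B(q,\epsilon)\subset P_\Omega^-\cap B(0,1)$ produces a ball of radius $\sim_{reg}\epsilon$ contained in $[P_\Omega^-\setminus\Omega]\cap B(0,1)$, whence $\calH^n([\Omega\bigtriangleup P_\Omega^-]\cap B(0,1))\gtrsim_{reg}\epsilon^n$, contradicting the hypothesis once $\delta\ll\epsilon^n$. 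The second step is to apply Condition~\textbf{B} to $B(0,\rho)$, with $\rho$ small in terms of $\epsilon$, to single out two distinct components $\Omega_1,\Omega_2$ of $S^c$ and two balls $B_1\subset\Omega_1\cap B(0,\rho)$, $B_2\subset\Omega_2\cap B(0,\rho)$ of radii $\sim_{reg}\rho$, with associated half-spaces $P_1^-,P_2^-$; if $\delta\ll\rho^n$ then both $P_1$ and $P_2$ must meet $B(0,\rho)$, since otherwise one of $B_1,B_2$ lies on the wrong side of $P_1$ and violates $\calH^n([\Omega_i\bigtriangleup P_i^-]\cap B(0,1))\lesssim_n\delta$.

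The third step is the purely Euclidean claim that $P_1$ and $P_2$ are $\epsilon^2$-close inside the Euclidean ball $B_{\mathrm{Euc}}$ of radius $1/20$ about $0$, i.e.\ $P_2\cap B_{\mathrm{Euc}}\subset P_{1,\mathrm{Euc}}(\epsilon^2)$: if not, then after translating both hyperplanes by $\le c\rho$ to pass through $0$ (now an honest Euclidean translation with no metric distortion, which is exactly the point where the Heisenberg argument had to work harder) the set $P_1^-\cap P_2^-\cap B_{\mathrm{Euc}}$ contains a Euclidean ball of radius $\sim_n\epsilon^2$, hence $\calH^n(P_1^-\cap P_2^-\cap B(0,1))\gtrsim_n\epsilon^{2n}$, contradicting, for $\delta\ll\epsilon^{2n}$, the estimate $\calH^n(P_1^-\cap P_2^-\cap B(0,1))\le\calH^n([\Omega_2\bigtriangleup P_2^-]\cap B(0,1))+\calH^n([\Omega_1\bigtriangleup P_1^-]\cap B(0,1))\lesssim_n\delta$. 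The corollary $P_1^+\cap P_2^+\cap B_{\mathrm{Euc}}\subset P_{1,\mathrm{Euc}}(\epsilon^2)$ then follows by the same connectedness argument as in the Heisenberg proof, using that the two components $U_1,U_2$ of $B_{\mathrm{Euc}}\setminus P_{1,\mathrm{Euc}}(\epsilon^2)$ each have measure $\sim_n1$, that $P_2$ separates them, and that $U_1\subset P_1^-\cap P_2^-\cap B_{\mathrm{Euc}}$ would again contradict $\delta\ll1$.

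Finally one concludes exactly as in the proof of Proposition~\ref{monotonicityToBetas}: combining the first step applied to $\Omega_1$ with the two displays above and with $\dist(q,P_2)\le\epsilon$ on $S\cap P_2^-\cap B(0,1/2)$ — and using that in $\R^n$ the $\epsilon^2$-neighbourhood incurs no H\"older loss — gives $S\cap B(0,1/40)\subset P_1(c\epsilon)$ for a universal $c>0$; and then the reverse inclusion $P_1\cap B(0,1/80)\subset S(2c\epsilon)$ is obtained by showing that the two components $U_1',U_2'$ of $B(0,1/40)\setminus P_1(c\epsilon)$ lie in $\Omega_1$ and $\Omega_2$ respectively (they are connected, disjoint from $S$, have measure $\sim_n1$, and the two halves of the hypothesis rule out the alternatives), hence in distinct components of $S^c$, so that a point $q\in P_1\cap B(0,1/80)$ with $\dist(q,S)>2c\epsilon$ would give a ball $B(q,2c\epsilon)\subset B(0,1/40)\setminus S$ meeting both $U_1'$ and $U_2'$ — impossible. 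Taking $P:=P_1$ finishes the proof. I do not expect a serious obstacle here: the Heisenberg proof was written so as to be ``Euclidean-compatible'', and the only real simplification is that the metric switch disappears; the one thing requiring care is bookkeeping the powers of $\epsilon$ and $\delta$ (now $\epsilon^n$, $\epsilon^{2n}$, $1$) and absorbing them into the universal constant $c$.
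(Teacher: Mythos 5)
Your proposal is correct and matches the paper exactly: the paper's own ``proof'' of Proposition~\ref{prop:eucl-monotonicityToBetas} is precisely the statement that the proof of Proposition~\ref{monotonicityToBetas} translates to $\R^n$ with only trivial modifications, and you have carried out that translation faithfully, including the correct adjustment of the exponents ($\epsilon^{2k+2}\to\epsilon^n$, $\epsilon^{4k+2}\to\epsilon^{2n}$) and the observation that the Kor\'anyi--Euclidean H\"older comparison step becomes vacuous. No gaps.
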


As a consequence, one gets the following Euclidean version of Corollary~\ref{cor:main}.

\begin{cor} \label{cor:eucl-small-width-to-beta}
There is a dimensional constant $\overline \epsilon >0$ such that the following holds for every $0 <\epsilon < \overline \epsilon$. Let $S\subset \R^n$ be a Semmes surface. There is $0<\gamma<1$, depending only on $\epsilon,n$ and on the upper Ahlfors-regularity and Condition~\textbf{B} constants for $S$, such that the following holds. If $p\in S$, $0<r<\diam S$, and $\wid_{B(p,r)}(S) \leq (80\gamma)^{n+1}$, then there is a hyperplane $P\subset \R^n$ such that
\begin{equation*}
\sup_{q\in S \cap B(p, \gamma r)} \dist(q,P) + \sup_{q\in P \cap B(p, \gamma r)} \dist(q,S) \leq \epsilon \gamma r.
\end{equation*}
\end{cor}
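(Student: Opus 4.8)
The plan is to prove Corollary~\ref{cor:eucl-small-width-to-beta} by repeating, essentially verbatim, the argument by which Corollary~\ref{cor:main} is obtained in the Heisenberg setting: it should follow from combining the Euclidean analogue of Corollary~\ref{cor:small-width-imply-cc-close-half-spaces} with Proposition~\ref{prop:eucl-monotonicityToBetas}. The Euclidean analogue of Corollary~\ref{cor:small-width-imply-cc-close-half-spaces} is in turn the composition of the Euclidean rephrasing of Proposition~\ref{form1} -- available because in $\R^n$ every line $\ell$ satisfies $\calH^{1}(I)=\diam(I)$ for segments $I\subset\ell$, so that $\NM_{B}(A)\le\wid_{B}(\partial A)$ holds with respect to the measure $\eta$ on $\calL_{\R^n}$ -- with the Euclidean analogue of Proposition~\ref{prop:small-non-mono-imply-approx-half-space} stated just above. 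Both of these are already recorded in this section, so the only real work here is to chain them correctly and to track the constants.

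Concretely, I would first fix $0<\epsilon<\overline\epsilon$, with $\overline\epsilon$ the dimensional constant supplied by Proposition~\ref{prop:eucl-monotonicityToBetas}, and apply that proposition with a fixed small multiple of $\epsilon$ as parameter -- small enough to absorb both the forthcoming $1/80$ change of scale and the fact that the target estimate is a sum of two suprema. This produces a threshold $\delta>0$ depending only on $\epsilon$, $n$, and the Condition~\textbf{B} constant of $S$. I would then feed this $\delta$, together with the upper Ahlfors-regularity constant of $S$, into the Euclidean analogue of Proposition~\ref{prop:small-non-mono-imply-approx-half-space} to obtain $0<\gamma_0<1$, depending only on $\epsilon$, $n$, and the regularity constants of $S$, such that any measurable $F\subset\R^n$ with upper Ahlfors-regular boundary and $\NM_{B}(F)\le\gamma_0^{n+1}$ is $\delta$-close, in the sense of~\eqref{form37}, to a half-space in the ball concentric with $B$ and scaled by $\gamma_0$. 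Finally I would set $\gamma:=\gamma_0/80$, so that the hypothesis $\wid_{B(p,r)}(S)\le(80\gamma)^{n+1}$ reads exactly $\wid_{B(p,r)}(S)\le\gamma_0^{n+1}$.

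With these choices in place, the deduction is short. Given $p\in S$ and $0<r<\diam S$ with $\wid_{B(p,r)}(S)\le\gamma_0^{n+1}$, the Euclidean analogue of Proposition~\ref{form1} gives, for every connected component $\Omega$ of $S^c$,
\begin{displaymath}
\NM_{B(p,r)}(\Omega)\le\wid_{B(p,r)}(\partial\Omega)\le\wid_{B(p,r)}(S)\le\gamma_0^{n+1},
\end{displaymath}
using $\partial\Omega\subset S$ and the monotonicity of $E\mapsto\wid_{B(p,r)}(E)$ under inclusion. The Euclidean analogue of Proposition~\ref{prop:small-non-mono-imply-approx-half-space} then assigns to each such $\Omega$ a half-space $P_\Omega^-\subset\R^n$ that is $\delta$-close to $\Omega$ inside $B(p,\gamma_0 r)$. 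This is precisely the hypothesis of Proposition~\ref{prop:eucl-monotonicityToBetas} with the ball $B(p,\gamma_0 r)$ in place of $B(p,r)$, so that proposition furnishes a hyperplane $P\subset\R^n$ which, by the choice of parameter in the previous step, satisfies $\dist(q,P)\le\epsilon\gamma r$ for all $q\in S\cap B(p,\gamma r)$ and $\dist(q,S)\le\epsilon\gamma r$ for all $q\in P\cap B(p,\gamma r)$, recalling that $\gamma_0 r/80=\gamma r$; summing the two bounds gives the claimed estimate.

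I do not expect any genuine obstacle: this is a mechanical translation of the proof of Corollary~\ref{cor:main}, and the two substantive inputs -- the Euclidean stability statement for sets of small non-monotonicity and the Euclidean version of Proposition~\ref{monotonicityToBetas} -- have already been established above. The only point requiring a little care is the bookkeeping of the numerical constants (the factor $80$, and the replacement of $\epsilon$ by a fixed fraction of itself so that the final estimate is genuinely $\le\epsilon\gamma r$ rather than merely a constant multiple of it), which is identical to what is done in the Heisenberg case.
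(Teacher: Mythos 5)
Your proposal is correct and follows exactly the route the paper intends: the corollary is obtained by chaining the Euclidean version of Proposition~\ref{form1} with the Euclidean analogue of Proposition~\ref{prop:small-non-mono-imply-approx-half-space} (yielding the Euclidean counterpart of Corollary~\ref{cor:small-width-imply-cc-close-half-spaces}) and then applying Proposition~\ref{prop:eucl-monotonicityToBetas}, with the same $\gamma=\gamma_0/80$ normalization and the replacement of $\epsilon$ by a small fixed fraction to absorb the rescaling. Your constant bookkeeping matches what is done for Corollary~\ref{cor:main} in the Heisenberg case.
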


Finally, the proof of Theorem~\ref{widthL1} and Proposition~\ref{NMCarleson} can be rephrased to give the following statements in $\R^{n}$.

\begin{thm}\label{widthL1Rn} Assume that $E \subset \R^{n}$ is a closed set and $\mu$ is an upper Ahlfors-regular measure with dimension $n-1$ in $\R^{n}$. Then
\begin{displaymath} \int_{0}^{\infty} \int_{\R^{n}} \wid_{B(x,s)}(E) \, d\mu(x) \, \frac{ds}{s} \lesssim_{reg} \calH^{n - 1}(E). \end{displaymath}
\end{thm}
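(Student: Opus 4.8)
The plan is to mimic, essentially verbatim, the proof of Theorem~\ref{widthL1} given above, since the only genuinely Heisenberg-specific ingredient there was the Crofton-type upper bound~\eqref{crofton2}, and its Euclidean analogue is both classical and elementary. First I would set up the dyadic decomposition exactly as before: by approximate monotonicity of width under comparable balls,
\begin{displaymath}
\int_{0}^{\infty} \int_{\R^{n}} \wid_{B(x,s)}(E) \, d\mu(x) \, \frac{ds}{s} \lesssim_{n} \sum_{j \in \Z} \int_{\R^{n}} \wid_{B(x,2^{j})}(E) \, d\mu(x),
\end{displaymath}
and for each scale $2^{j}$ cover $\R^n$ by the balls $B(x_i^j, 2^j)$ coming from a maximal $2^j$-net, enlarging to $B_i^j := B(x_i^j, 2^{j+1})$. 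Using $\mu(B(x_i^j,2^j)) \lesssim_{reg} 2^{(n-1)j}$ and unwinding the definition of $\wid_{B_i^j}(E)$ as $2^{-(n+1)j}$ times the integral of $\diam(E \cap B_i^j \cap \ell)$ against $\eta$, I would arrive (exactly as in the proof of Theorem~\ref{widthL1}, after swapping sum and integral) at a bound of the form
\begin{displaymath}
\lesssim_{reg} \int_{\calL_{\R^n}} \sum_{j \in \Z} \sum_{i} \frac{\diam(E \cap B_i^j \cap \ell)}{2^{j}} \, d\eta(\ell).
\end{displaymath}

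The heart of the matter is then the one-dimensional combinatorial estimate on a fixed line: if $\ell$ is a line with $\card(E \cap \ell) = N < \infty$, enumerate the points $a_1 < \dots < a_N$ of $E \cap \ell$, set $I_j := [a_j, a_{j+1}]$, observe $\diam(E \cap B_i^j \cap \ell) = \sum_{I_m \subset B_i^j} \diam(I_m)$, and note that for each fixed interval $I_m$ the pairs $(j,i)$ with $I_m \subset B_i^j$ contribute $\sum \diam(I_m)/2^{j} \lesssim_n 1$, because the sum over $j$ is restricted to $2^{j} \gtrsim \diam(I_m)$ and, for each such $j$, the bounded-overlap property of the balls $B_i^j$ limits $i$ to $O_n(1)$ choices. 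This yields $\sum_{j,i} \diam(E \cap B_i^j \cap \ell)/2^j \lesssim_n N \lesssim_n \card(E \cap \ell)$, so the whole expression is controlled by $\int_{\calL_{\R^n}} \card(E \cap \ell) \, d\eta(\ell)$.

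It remains to justify that $\int_{\calL_{\R^n}} \card(E \cap \ell)\, d\eta(\ell) \lesssim_n \calH^{n-1}(E)$, which is the Euclidean substitute for~\eqref{crofton2}. This is the classical Crofton / integralgeometric inequality: parametrising $\eta$ as an integral over directions $\nu \in S^{n-1}$ of the $(n-1)$-Lebesgue measure of the parameter of lines in direction $\nu$, one has $\card(E \cap \pi_{\nu}^{-1}\{x\})$ integrated in $x$ bounded by the integralgeometric measure $\mathcal{I}^{n-1}(E) \lesssim_n \calH^{n-1}(E)$, where $\pi_\nu$ is the orthogonal projection onto $\nu^\perp$; this last inequality holds because orthogonal projections are $1$-Lipschitz, so $\calH^{n-1}(\pi_\nu(E)) \le \calH^{n-1}(E)$, and more generally the multiplicity-weighted projection is controlled by $\calH^{n-1}$ (see e.g.\ Mattila's book, or simply note $\mathcal{I}^{n-1} \le c_n \calH^{n-1}$). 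As in the Heisenberg case one first reduces to $\calH^{n-1}(E) < \infty$ (otherwise nothing to prove) and checks the Borel measurability of the relevant integrands for $E$ closed, leaving these routine measure-theoretic points to the reader.

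I do not expect any serious obstacle: every step parallels the already-established proof of Theorem~\ref{widthL1}, and the Euclidean Crofton bound is strictly easier than the Heisenberg Crofton-type bound of~\cite[Lemma~5.3]{CFO2}. The only minor point of care is making sure the exponents line up — width in $\R^n$ is normalised by $r^{n}$ and $\mu$ is $(n-1)$-regular, so the powers of $2^j$ combine to exactly cancel in the same way the powers $2^{(2k+2)j}$ and $2^{(2k+1)j}$ did in the Heisenberg setting — but this is bookkeeping rather than an obstacle.
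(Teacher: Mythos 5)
Your proposal is correct and follows essentially the same route as the paper, which proves Theorem~\ref{widthL1Rn} by repeating the proof of Theorem~\ref{widthL1} verbatim with the Crofton-type bound~\eqref{crofton2} replaced by its classical Euclidean counterpart $\int_{\calL_{\R^n}} \card(E \cap \ell)\, d\eta(\ell) \lesssim_n \calH^{n-1}(E)$. Your exponent bookkeeping is also right: the normalisation $r^{-n}$ of width against the $(n-1)$-regularity of $\mu$ leaves exactly the factor $2^{-j}$ needed for the per-line telescoping estimate.
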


\begin{proposition} \label{prop:carleson-width-eucl}
Let $E \subset \R^n$ be a closed upper Ahlfors-regular set with dimension $n-1$. Then,
\begin{displaymath} \int_{0}^{R} \calH^{n-1}(\{q \in E \cap B(p,R) : \wid_{B(q,s)}(E) > \epsilon\}) \,  \frac{ds}{s} \lesssim_{reg} \frac{R^{n-1}}{\epsilon} \end{displaymath}
for all $\epsilon > 0$, $p \in E$, and $R > 0$.
\end{proposition}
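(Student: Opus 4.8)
The plan is to obtain Proposition~\ref{prop:carleson-width-eucl} from Theorem~\ref{widthL1Rn} by exactly the same Markov-inequality argument used to deduce Proposition~\ref{NMCarleson} from Theorem~\ref{widthL1}. First I would fix $p\in E$ and $R>0$ and apply Theorem~\ref{widthL1Rn} with the upper Ahlfors-regular measure $\mu:=\calH^{n-1}|_{E\cap B(p,R)}$, whose upper regularity constant is controlled by that of $E$. The one point that needs to be recorded before doing this is the localisation of the width: if $q\in E\cap B(p,R)$ and $0<s<R$, then $B(q,s)\subset B(p,2R)$, hence $B(q,s)\cap E=B(q,s)\cap E\cap B(p,2R)$ and therefore $\wid_{B(q,s)}(E)=\wid_{B(q,s)}(E\cap B(p,2R))$; this is immediate from the Euclidean definition of the width, the line measure $\eta$ playing no role.

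Next I would apply Markov's inequality in the variable $q$ for each fixed $s\in(0,R)$,
\[
\calH^{n-1}\big(\{q\in E\cap B(p,R):\wid_{B(q,s)}(E)>\epsilon\}\big)\le\frac1\epsilon\int_{\R^n}\wid_{B(q,s)}(E\cap B(p,2R))\,d\mu(q),
\]
using the localisation to replace $E$ by the closed set $E\cap B(p,2R)$ inside the width. Integrating against $ds/s$ over $(0,R)$, enlarging the range of integration to $(0,\infty)$, and invoking Theorem~\ref{widthL1Rn} for the closed set $E\cap B(p,2R)$ and the measure $\mu$ gives
\[
\int_0^R\calH^{n-1}\big(\{q\in E\cap B(p,R):\wid_{B(q,s)}(E)>\epsilon\}\big)\,\frac{ds}{s}\le\frac1\epsilon\int_0^\infty\int_{\R^n}\wid_{B(q,s)}(E\cap B(p,2R))\,d\mu(q)\,\frac{ds}{s}\lesssim_{reg}\frac{\calH^{n-1}(E\cap B(p,2R))}{\epsilon}.
\]
Finally the upper Ahlfors-regularity of $E$ yields $\calH^{n-1}(E\cap B(p,2R))\lesssim_{reg}R^{n-1}$, which is the desired bound.

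There is no genuine obstacle in this deduction; the real content sits in the input Theorem~\ref{widthL1Rn}, whose proof is a word-for-word transcription of that of Theorem~\ref{widthL1}. The only two facts one has to check there are: the approximate monotonicity $\wid_B(E)\lesssim_n\wid_{B'}(E)$ for balls $B\subset B'$ of comparable radii, which is clear from the scaling in the Euclidean definition of the width; and the Crofton-type bound $\int_{\calL_{\R^n}}\card(E\cap\ell)\,d\eta(\ell)\lesssim_n\calH^{n-1}(E)$, the Euclidean analogue of~\eqref{crofton2}, which is a classical integral-geometry estimate. Given these, one decomposes $\R^n$ along a dyadic family of maximal nets, bounds the width on each net ball by a sum of gap lengths along lines, and sums, exactly as in the proof of Theorem~\ref{widthL1}; the finiteness of $E\cap\ell$ for $\eta$-a.e.\ $\ell$ when $\calH^{n-1}(E)<\infty$ is what makes this summation legitimate. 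The hardest single ingredient is thus just the Euclidean Crofton bound, which is standard.
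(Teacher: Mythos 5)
Your proposal is correct and follows essentially the same route as the paper: apply the Euclidean $L^1$ bound (Theorem~\ref{widthL1Rn}) with $\mu=\calH^{n-1}|_{E\cap B(p,R)}$, use the localisation $\wid_{B(q,s)}(E)=\wid_{B(q,s)}(E\cap B(p,2R))$, then Chebyshev and upper Ahlfors-regularity, exactly as in the proof of Proposition~\ref{NMCarleson}. Your remarks on transcribing the proof of Theorem~\ref{widthL1}, with the Euclidean Crofton bound replacing~\eqref{crofton2}, also match what the paper records.
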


The only noticeable difference in the proof of Theorem~\ref{widthL1Rn} is that~\eqref{crofton2} should be replaced by
\begin{displaymath} \int_{\calL_{\R^n}} \card(E \cap \ell) \, d\eta(\ell) \lesssim_n \calH^{n - 1}(E), \end{displaymath}
which holds by the same argument as for~\eqref{crofton2}.

We recall now the definition of the bilateral weak geometric lemma for sets with codimension one in the Euclidean setting \cite[Definition~2.2]{MR1251061}. Given $E\subset\R^n$, $p\in E$, and $s>0$, we define
\begin{equation*}
b\beta_E(p,s) := \inf_{P}\, \left\{ \sup_{q \in B(p,s) \cap E} \frac{\dist(q,P)}{s} + \sup_{q \in B(p,s) \cap P} \frac{\dist(q,E)}{s} \right\},
\end{equation*}
where the infimum runs over all hyperplanes $P\subset\R^n$. We say that a set $E\subset\R^n$ satisfies the \textit{bilateral weak geometric lemma} (or BWGL in short) if
\begin{equation} \label{e:def-bwgl-eucl}
\int_0^R \calH^{n-1} (\{q\in E \cap B(p,R): b\beta_E(q,s) > \epsilon \}) \,  \frac{ds}{s} \lesssim_\epsilon R^{n-1}
\end{equation}
for all $\epsilon>0$, $p\in E$, and $R>0$.

The validity of BWGL for Semmes surfaces in $\R^n$ can now be obtained as an easy consequence of Corollary~\ref{cor:eucl-small-width-to-beta} and Proposition~\ref{prop:carleson-width-eucl}, following the arguments given for the proof of Proposition~\ref{prop:bwgl-arbitrary-hyperplanes}.

To conclude, we go back to the analogue of Theorem~\ref{main} in the Euclidean setting. As already said, as an easy consequence of Condition~\textbf{B}, Semmes surfaces in $\R^n$ have big projections and are lower Ahlfors-regular. Since BWGL obviously imply WGL (see for instance~\cite[Definition~1.16]{MR1132876} for the Euclidean version of the weak geometric lemma), one can implement the arguments described above to recover that Semmes surfaces in $\R^n$ have big pieces of Lipschitz graphs (BPLG), the last step of the proof being given by~\cite[Theorem~1.14]{MR1132876} which states that a closed Ahlfors-regular set with big projections and satisfying WGL in $\R^n$ has BPLG.

\bibliographystyle{plain}
\bibliography{references}

\end{document}